\def\sek~{\S{}}
\newtheorem{theorem}{Theorem}[section]
\newtheorem{lemma}[theorem]{Lemma}
\newcommand{\rosso}[1]{\color{red} #1} 
\renewcommand{\div}{{\rm div}} 
\renewcommand{\div}{\textrm{div}}
\newcommand{\re}{\mathbb{R}}
\numberwithin{equation}{section}
\renewcommand\lll{|\kern-1pt|\kern-1pt|} 
\newcommand{\K}{T} 
\renewcommand{\O}{\Omega} 
\renewcommand{\div}{{\rm div}} 
\newcommand{\jump}[1]{\lbrack\!\lbrack\,#1\,\rbrack\!\rbrack} 
\newcommand{\av}[1]{\{#1\}} 
\newcommand{\Th}{\mathcal{T}_h} 
\newcommand{\dyle}{\displaystyle} 
\newcommand{\calZ}{\mathcal{Z}}
\newcommand{\calP}{ \mathcal{P}}
\newcommand{\calI}{ \mathcal{I}}
\newcommand{\calB}{ \mathcal{B}}
\newcommand{\tf}{t_{f}}
\renewcommand{\lor }{\longrightarrow}
\newcommand{\Q}{\mathbb{Q}}
\definecolor{soheil}{rgb}{0.2,0.45,0.2}
\keywords{plasma physics, discontinuous Galerkin, Vlasov Poisson system, energy conservation}
\subjclass{82C80,  65M60, 65M12, 82A70}
\begin{document}

\title[ High Order  and Energy preserving DG methods for Vlasov-Poisson]{High Order and Energy preserving Discontinuous Galerkin Methods for the Vlasov-Poisson system}

\author{Blanca Ayuso de Dios}
\address{Centre de Recerca Matem\'{a}tica, UAB Science Faculty, 08193 Bellaterra, Barcelona, Spain}
\author{Soheil Hajian}
\address{Section de Math\'ematiques, Universit\'e de Gen\`eve, CP 64, 1211 Gen\`eve 4, Switzerland}

\begin{abstract}

We present a computational study for a family of discontinuous Galerkin methods for the one dimensional Vlasov-Poisson system, recently introduced in \cite{acs0}. We introduce a slight modification of the methods to allow for feasible computations while preserving the properties of the original methods. We study numerically the verification  of the theoretical and convergence analysis, discussing also the conservation properties of the schemes.  The methods are validated through  their application to some of the benchmarks in the simulation of plasma physics. 
\end{abstract}

\maketitle


Numerical simulation has become a major tool for  understanding the complex behavior of a plasma or a particle beam in many situations. This is due not only  to the large number of physical applications and technological implications of the behavior of plasmas, but  also to the intrinsic  difficulties of the models used to describe such behavior. In fact,  it was recognized long time ago that there does not exist any fully satisfactory macroscopic model (fluid equations) which can be used to describe the particle interaction in laser-fusion problems. In contrast, microscopic models (kinetic equations) can provide a more accurate description of the plasmas.

One of the simplest model problems that is currently used in the simulation of plasmas is the Vlasov-Poisson system.  Such system describes the evolution of a plasma of charged particles (electrons and ions) under the effects of the transport and self-consistent electric field. The unknown, typically denoted by  $f(x,v,t)$ (with  $x$ standing for position, $v$ for velocity and $t$ for time), represents the distribution function of particles (ions, electrons, etc.) in the phase space. The coupling with a self-consistent electrostatic field (neglecting magnetic effects)  is taken into account through the Poisson equation. The nonlinear structure of the system prevents from obtaining analytical solutions, except for a few academic cases (see the surveys \cite{glassey,Bouchut,dolbeault} for a good
description on the state of the art of the mathematical analysis of the problem). Therefore, numerical simulations have to be performed to study realistic physical phenomena.

At the present time, there can be distinguished two main  classes of numerical methods  for  simulating plasmas;  Lagrangian (or probabilistic)  and  Eulerian (or deterministic) methods. The former class include all different types of particle methods  \cite{BirdLang,CoRa84,WONumer96, GanVic89, pic1,pic3,pic2} and has been a preferred choice since the beginnings of numerical simulations in  plasma physics  in the $60'$s, due to their simplicity and low computational cost. The basic idea behind these methods is to approximate the motion of the plasma by a finite number of macro-particles in the phase space whose trajectories are computed from the characteristics of the Vlasov equation, while the electrostatic field is computed by collecting the  charge density on a fixed mesh of the physical space. Although this class of methods represents a feasible option and potentially might allow for  resolving the whole $3+3+1$ dimensional problem, their inherent numerical noise precludes from obtaining an accurate description of the distribution function in
the phase space in many interesting cases. This lack of precision can be overcome by using a method from the second class; an Eulerian solver. These type of methods are nothing but classical (or new) numerical schemes discretizing the Vlasov equation on a (fixed) mesh of the phase space. Among them, the most widely used are finite volumes  \cite{Fijal99, filbet} and semi-lagrangian methods \cite{filbet2,Bes04,mcp0,CV07,cr-son00,sl1,sl2,sl3}. Finite volumes (FV) are a simple and inexpensive  option, but in general are low order if one wants to retain  the basic conservative properties of the scheme. \\
Semi-lagrangian schemes (sometimes consider {\it in-between} Eulerian and Lagrangian solvers) have become a popular option, since they can achieve high order allowing at the same time for time integration with large time steps.  However, special care in needed to compute the origin of the characteristics with high order interpolation without spoiling the local character of the reconstruction. A nice numerical study comparing some of the different methods use in plasma simulations is presented in  \cite{comparison}.

In this paper we present a computational study with discontinuous Galerkin (DG) methods. 
DG methods are finite element methods that use discontinuous polynomials. Their local construction endow the methods with good local conservation properties without sacrificing the order of accuracy. This is one of the main motivations for their use in plasma simulations. But it also provides the methods a built-in parallelism  which allows for parallelization of the resulting algorithms. The methods have also many other attractive features: they are extremely flexible in handling $hp$-adaptivity,  the boundary conditions are imposed weakly and the DG mass matrices are block-diagonal which results in very efficient time-stepping algorithms in the context of time-dependent problems, as it is the case here.
In spite that nowdays, DG methods are consider for approximating problems of almost any kind, their  use  for kinetic equations, and more particularly for simulation of plasmas has only been contemplated  very recently.  Among the computational works, we mention the use of DG in a
multi-waterbag approximation of the VP system in \cite{BBBB09};  a piecewise constant DG solver  for VP  in~\cite{gamba0} (which require extremely fine meshes) and semi-lagrangian schemes combined with  high order DG interpolation are presented in \cite{seal,qiuCW}. In both works, the authors also use the positivity preserving limiter introduced in \cite{zs-posit0}. 

A theoretical work has been presented in \cite{acs0,acs1}, where the authors have introduced and analyzed a family of semi-discrete DG schemes for the VP system with periodic boundary conditions, for the one and multi-dimensional cases, respectively. The authors show optimal error estimates for both the distribution function and the electrostatic field, and they study the conservation properties of the proposed schemes. Due to the local construction of the DG schemes,
total mass (or charge) conservation is shown to hold easily. This property is essential in the numerical approximation to VP, since it is required for guaranteeing the well-posedness of the related Poisson problem. The authors also propose a novel DG scheme that is shown to preserve the total energy of the VP system. Their proof however requires the assumption that the DG finite element spaces contain at least all quadratic polynomials.

In this work, we undertake the issues of verification and validation of these family of DG schemes, for the one-dimensional VP system. To accomplish both tasks, we first discuss how the schemes can be efficiently implemented in practice, even in parallel.  For the space discretization, the methods introduced in \cite{acs0}  are based on the coupling of a DG discretization for the Vlasov equation (transport equation) together with a mixed finite element (possibly discontinuous)  approximation to the Poisson problem. Here, however, we present two slight variations of the DG approximation for the Vlasov equation, to allow for feasible computations. The modifications are done in the definition of the numerical flux involving the coupling with the approximate electrostatic field (hence the nonlinearity). The definition in \cite{acs0} would require the computation, at each time step of the zeros of the  approximate electrostatic field, which would increase substantially the cost, taking into account that we use high order approximations. Nevertheless, as we show here, the slight variation in the schemes  does not  affect the optimal accuracy of the methods. Furthermore, since the new definition of the flux is still consistent,  the mass and energy conservation can still be guaranteed (even at the theoretical level). Also,  here we demonstrate numerically that for the energy preserving scheme given in \cite{acs0}, it is indeed necessary (and not a technical restriction due to the proof) to use finite element spaces spaces containing all quadratic polynomials. 

For the time discretization we stick to  a simple fourth order explicit Runge Kutta (RK) method,  the so-called RK4 or classic Runge-Kutta \cite{hairer0}. The reason for not using {\it total variation diminishing} (TVD) RK integrator  is twofold. On  the one hand, in our simulations we have observed no numerical evidence of any essential benefit of the TVD integrator
 over the standard RK method  (probably due to the smoothness  of the solution). On the other hand,   since we focus on high order methods (for the space discretization), the time integration should be accomplished also with some high order time integration scheme. As is well known \cite{tvd0}, a fourth (or higher)  order TVD RK, would require for the computation of the internal stages, the evaluation of the operator and its adjoint, due to the presence of some negative coefficients in the corresponding TVD-RK tableau. This would substantially increase the cost (and storage) of the overall procedure, without any  significant  benefit.
With the fully discrete schemes, we verify numerically the theory developed in \cite{acs0}; both the error analysis together with the conservation properties. 

The second goal of the paper is to validate the methods by studying their performance in approximating some of the classical benchmark problems in plasma physics. Here we consider the linear and nonlinear Landau-damping together with two benchmarks related to the two stream instability problems. We compare our numerical results with those available in literature, getting always at least the same outcomes. In particular, we show the benefit of using the energy preserving high order DG method for the numerical simulations (since no extremely refined meshes are needed and the code can be parallelized).\\

In the last part of the paper, we consider the application of the schemes for the boundary value problem of the VP system studied in \cite{filbet_shu}.
This problem models the evolution of a collisionless electron gas under the influence of a electrostatic field $E$ in an interval $[0,1]$, with electrons  emitted at one end and absorbed at the other end  of the interval. Due to the absorbing boundary condition, it has been proved theoretically   the distribution function $f$ might become discontinuous in finite time, depending on the sign and magnitude of the electrostatic field at the boundary.
Although the DG methods we consider in this paper were not originally designed to approximate such problem, we study here the ability of the methods to capture the discontinuity. 
The results however, are not completely satisfactory, since we do not always (at all the times) capture the behaviour of the solution predicted by the theory developed in \cite{filbet_shu}. A possible reason is the weak nature of the singularity, but it might also happen that as the time evolves the full discretization is adding too much artificial viscosity, which does not allow the methods to capture completely the singularity. This issue together with the tuning of the schemes to capture correctly the singularities (at all times) will be the subject of future study. \\

The outline of the paper is as follows. In Section \ref{sec:2} we
describe the main properties of the continuous problem and introduce the basic notations related to the discrete DG methods. We then introduce the numerical methods we consider discussing also their main properties in Section \ref{sec:3}.  In Section \ref{sec:4} we consider the full discretization and deal with the implementation issues related to the schemes.
Section \ref{sec:6} is devoted to the validation and convergence study of the schemes. We present extensive numerical tests and consider the application of the methods for the simulation of Landau damping and two different tests related to the nonlinear two stream instability. In section \ref{sec:7} we examine the application of the considered DG methods for approximating a Vlasov-Poisson boundary value problem (no periodic boundary conditions). Finally, we derive some conclusion in section \ref{sec:fin} 



\noindent {\bf Notation:} Throughout this paper, we use the standard notation for Sobolev spaces (see \cite{Adams75}). For a bounded domain $B\subset \mathbb{R}^{2}$,  we denote by $H^{m}(B)$ the standard Sobolev space of order $m \geq 0$ and by  $\|\cdot \|_{m,B}$ and $| \cdot |_{m,B}$ the usual Sobolev norm and seminorm, respectively. For $m = 0$, we write $L^{2}(B)$ instead of $H^{0}(B)$. We shall denote by $H^{m}(\mathcal{I})/\re$ the quotient space consisting of equivalence classes of elements of $H^{m}(\mathcal{I})$ differing by constants; for $m=0$ it is denoted by $L^{2}(\mathcal{I})/\re$. We shall indicate by $L^{2}_{0}(B)$ the space of $L^{2}(B)$ functions having zero average over $B$.
\section{The Vlasov-Poisson system and basic notation}\label{sec:2}
In this section we introduce the Vlasov Poisson system and recall some of its properties.
In the last part of the section, we also introduce the basic notation required for describing the numerical methods we consider.
\subsection{Continuous problem: the Vlasov-Poisson system}\label{sec2:a}
We consider a noncollisional plasma of charged particles (electrons and ions). For simplicity, we assume that the properties of the plasma are one dimensional and   we take into account only the electrostatic forces, thus neglecting the electromagnetic effects. We denote by $f=f(x,v,t)$ the electron distribution function and by $E(x,t)=\Phi_{x}(x,t)$  the electrostatic field. The Vlasov-Poisson equations of the plasma in dimensionless variables can be rewritten as,
 \begin{align}
\frac{\partial f}{\partial t}+v\frac{\partial f}{\partial x}-\Phi_{x} \frac{\partial f}{\partial v} 
&=0\quad && (x,v,t)\,\,\in \,\,  \Omega_x\times \re \times [0,\tf],\label{mod01a}\\
-\Phi_{xx}&= \rho(x,t)-1  \quad && (x,t)\,\,\in \,\,  \Omega_x \times [0,\tf], \label{mod01b}
\end{align}
where $v$ denotes the velocity of the charged particles and $\rho(x,t)$ is the charge density defined by
\begin{equation*}
\rho(x,t)=\dyle{\int_{\re} f(x,v,t)dv} \quad \forall\,\, (x,t)\in \Omega_x\times[0,\tf].
\end{equation*}
Let $f_{0}$ denote a given initial distribution $f(x,v,0)=f_{0}(x,v)$ in $(x,v)\in [0,1]\times \re$. We impose periodic boundary conditions on $x$ for the transport equation \eqref{mod01a}, 
\begin{equation*}
f(0,v,t)=f(1,v,t) \quad \forall\, \, (v,t)\in \re\times [0,\tf],
\end{equation*}
and also for the Poisson equation \eqref{mod01b}; i.e.,
\begin{equation}\label{bceA0}
\Phi(0,t)=\Phi(1,t),\quad \forall\,\, t\in [0,\tf].
\end{equation}
To ensure the well-posedness of the Poisson problem we add the compatibility (or normalizing) condition
\begin{equation}\label{bc00}
\dyle{\int_{0}^{1}\rho(x,t) dx =\int_{0}^{1}\int_{\re} f(x,v,t)dvdx =1}, \quad \forall \,\, t\in[0,\tf],
\end{equation}
which is the condition for total charge neutrality. To guarantee the uniqueness of its solution $\Phi$ (otherwise is determined only up to a constant), we fix the value of $\Phi$ at a point. We set
\begin{equation}\label{phi00}
\Phi(0,t)=0 \quad \forall \,\, t\in [0,\tf].
\end{equation}
Notice that \eqref{bc00} express that the total charge of the system is preserved in time.

Through the paper we are only concerned with compactly supported solutions $f$ of problem \eqref{mod01}-\eqref{mod01b}. We assume that a bounded set  $\Omega_v \subset \mathbb{R}$ such that 
\begin{equation*}
\mbox{supp}(f(x,v,0)) \cup\mbox{supp}(f(x,v,t))  \subseteq \Omega_x\times \Omega_v \;, \qquad \forall\, t\in [0,\tf]\;, 
\end{equation*}
and so the Vlasov equation \eqref{mod01a}, can be (and will be) regarded in $\Omega_x\times \Omega_v\times [0,\tf]$:
 \begin{equation}\label{mod01}
\frac{\partial f}{\partial t}+v\frac{\partial f}{\partial x}-\Phi_{x} \frac{\partial f}{\partial v}=0
\quad  (x,v,t)\,\,\in \,\,  \Omega_x\times \Omega_v \times [0,\tf]\;.
\end{equation} 
The charge density is accordingly defined by
\begin{equation}\label{rho}
\rho(x,t)=\dyle{\int_{\Omega_v} f(x,v,t)dv} \quad \forall\,\, (x,t)\in \Omega_x\times[0,\tf].
\end{equation}
We define the total energy of the system as
\begin{equation}\label{ene:def0}
\mathcal{E}(t)= \int_{\O} f(x,v,t)\frac{|v|^{2}}{2}dxdv +\int_{\O_x} \frac{1}{2} |\Phi_x(x,t)|^{2} dx  \quad \forall\, t\in [0,\tf].
\end{equation}
The first term in the above definition represents the kinetic energy; the second, the potential energy of the system.
\subsubsection{Properties}
The Vlasov-Poisson system preserves in time many physical observables. We now briefly revise some:
\begin{itemize}
\item  {\it Mass conservation:} as already mentioned, the total charge of the system is preserved:
\begin{equation}\label{mas0}
\frac{d}{dt} \int_{\O} f(x,v,t)\, dx\,dv =0 \qquad \forall\, t\in [0,\tf]\;.
\end{equation}
\item {\it $L^{p}$-conservation:} noting that $\div_{x,v}\left( [v,-\Phi_x(x,t)]\right)\equiv 0$ one can deduce straightaway  the conservation of all $L^{p}$-norms of the distribution function:
\begin{equation}\label{lp0}
\frac{d}{dt} \int_{\O} \|f(x,v,t)\|^{p}\, dx\,dv =0 \qquad \forall\, t\in [0,\tf]\;.
\end{equation}
We will be particularly concerned with $p=1,2$.
\item {\it Total Energy:} Following \cite{dolbeault}, one can also show the following energy a-priori estimate:
\begin{equation}\label{ene:00}
\frac{d}{dt}\mathcal{E}(t)=\frac{d}{dt}\left( \int_{\O} f(x,v,t)\frac{|v|^{2}}{2}\, dx\,dv +
\int_{\O_x} \frac{1}{2} |E(x,t)|^{2} dx \right)=0 \qquad \forall\, t\in [0,\tf]\;,
\end{equation}
where we have already used the definition of the electrostatic field $E(x,t)=\Phi_x(x,t)$ (compare with \eqref{ene:def0}).
\end{itemize}
\subsection{Basic notation and preliminaries for the numerical methods}\label{sec2:2}
Let $\{\mathcal{T}_{h}\}$ be a family of partitions of our
computational/physical domain $\Omega=\Omega_x\times
\Omega_v=\Omega_x\times [-L,L]$, which we assume to be regular
\cite{ciar2} and made of rectangles. Each cartesian mesh
$\mathcal{T}_{h}$ is defined as 
\begin{equation*}
\mathcal{T}_{h}:=\left\{
\K_{ij}=I_{i}\times J_{j}  ,\quad 1\leq i\leq N_{x},\,\, 1\leq
j\leq N_{v}\,\right\},
\end{equation*}
 where
\begin{equation*}
I_{i}=[x_{i-1/2},x_{i+1/2}] \quad \forall\, i=1,\ldots ,
N_{x};\qquad J_{j}=[v_{j-1/2},v_{j+1/2}] \quad \forall\,
j=1,\ldots , N_{v}\;.
\end{equation*}
The mesh sizes $h_{x}$ and $h_{v}$ relative to the partition
are defined as
\begin{equation*}
0< h_{x} =\max_{1\leq i\leq N_{x}} h_{i}^{x}:=x_{i+1/2}-x_{i-1/2},
\quad 0< h_{v} =\max_{1\leq j\leq N_{v}}
h_{j}^{v}:=v_{i+1/2}-v_{i-1/2}\;,
\end{equation*}
with $h_{i}^{x}$ and $h_{j}^{v}$ denoting the cell lengths of $I_{i}$
and $J_{j}$, respectively. The mesh size of the partition is
defined as $h=\max{(h_{x},h_{v})}$. The shape regularity assumption implies that $\exists \, c_1,\, c_2 >0$ constants independent of $h$ such that $c_1h_v\leq \ h_{x} \leq c_2\,h_v$.\\
We assume that $v=0$ corresponds to a node of the partition along the $v$-axis, i.e.,
$v_{j-1/2}=0$ for some $j$ in the partition of $\Omega_v=[-L,L]$. We denote by $\{\mathcal{I}_{h}\}$ the family of partitions of the interval $\Omega_x$: $ \mathcal{I}_{h}:=\left\{\,\, I_{i}\, : \,\, 1\leq i\leq N_x\,\right\}$.\\
For $ k\geq 1$, let $\mathbb{P}^{k}(I_{i})$ be the space of polynomials of degree up to $k$, and let $\Q^{k}(\K_{ij})$ be  the space
of polynomials of degree at most $k$ in each variable ($(x,v)$). We define the finite element spaces: 
\begin{eqnarray}
V_{h}^{k}&=&\left\{  \psi\in L^{2}(\mathcal{I})\,\,: \quad \psi \in \mathbb{P}^{k}(I_{i}), \,\,\,\forall\,I_{i} \, , \, i=1,\ldots N_{x},\right\},  \label{fep}\\
\calZ_{h}^{k}&:=&\left\{  \xi\in L^{2}(\O)\,\,: \quad \xi \in \Q^{k}(\K_{ij}), \,\, \, \forall\, \K_{ij}= I_{i}\times J_{j},  \,\, \forall i\,, j \, \right\},  \label{feq}\\
W_{h}^{k}&=&\left\{  \chi \in \mathcal{C}^{0}(\mathcal{I}) \,\,: \quad \chi \in \mathbb{P}^{k}(I_{i}), \,\,\,\forall\, I_{i} \, , \, i=1,\ldots N_{x},\right\}\cap L^{2}(\mathcal{I})/\re\;.  \label{fepC} 
\end{eqnarray}
As is usual in the DG methods, we now introduce the the trace operators.
 We denote by
$(\varphi_{h})_{i+1/2,v}^{+}$ and  $(\varphi_{h})_{i+1/2,v}^{-}$
the values of $\varphi_h$ at $(x_{i+1/2},v)$ from the right cell
$I_{i+1}\times J_{j}$  and from the left cell $I_{i}\times J_{j}$,
respectively;
\begin{equation*}
(\varphi_{h})_{i+1/2,v}^{\pm}=\dyle{\lim_{\varepsilon \downarrow
0} {\varphi_h(x_{i+1/2}\pm \varepsilon,v)}} \;, \quad
(\varphi_{h})_{x,j+1/2}^{\pm}=\dyle{\lim_{\varepsilon \downarrow
0} {\varphi_h(x, v_{j+1/2}\pm \varepsilon)}} \;,
\end{equation*}
for all $(x,v)\in \mathcal{I}\times\mathcal{J}$ or in short-hand
notation
\begin{equation}\label{notai0}
(\varphi_{h})_{i+1/2,v}^{\pm}=\varphi_h(x^{\pm}_{i+1/2},v)\;,
\qquad   (\varphi_{h})_{x,j+1/2}^{\pm}=\varphi_h(x,
v^{\pm}_{j+1/2}) \;,
\end{equation}
for all $(x,v)\in I_{i}\times J_{j}$. The jump $\jump{\cdot}$ and
average $\av{\cdot}$ trace operators of $\varphi_{h}$ at
$(x_{i+1/2},v), \,\, \forall\, v\in J_{j}$ are defined by
\begin{equation*}
\begin{aligned}
\jump{\varphi_{h}}_{i+1/2,v}&:=(\varphi_{h})_{i+1/2,v}^{+}-(\varphi_{h})_{i+1/2,v}^{-}  \quad &&\forall \varphi_h \in \calZ_{h}^{k}\;, \\
\av{\varphi_{h}}_{i+1/2,v}&:=\dyle\frac{1}{2}\left[(\varphi_{h})_{i+1/2,v}^{+}+(\varphi_{h})_{i+1/2,v}^{-}\right]
\quad &&\forall \varphi_h \in \calZ_{h}^{k}\;.
\end{aligned}
\end{equation*}
For $k\geq 0$, let 
$P^{k}:L^{2}(\mathcal{I})\lor V_{h}^{k}$ be the standard
$L^{2}$- orthogonal projection onto the finite element space $V_{h}^{k}$
defined locally, i.e., for each $1\leq i\leq N_{x}$,
\begin{equation}\label{defpk00}
\int_{I_{i}} \left(P^{k}(w) -w\right) q_{h}\,dx= 0 \qquad \forall
q_{h}\in \mathbb{P}^{k}(I_{i})\;.
\end{equation}
By definition the projection is stable in $L^{2}(\mathcal{I})$ 
\begin{equation}\label{stabPk}
\| P^{k} (w)\|_{L^{2}(\mathcal{I}_h)}  \leq  \|w\|_{L^{2}
(\mathcal{I})}\qquad \forall\, w\in L^{2}(\mathcal{I}).
\end{equation}
We denote by $\calP_h:L^{2}(\O)\lor \calZ_h^{k}$  the corresponding two dimensional 
 $L^{2}$-orthogonal projection; defined
by $\calP_{h}(w)=(P^{k}_{x}\otimes P^{k}_{v} )(w)$; i.e., for all
$i$ and $j$,
\begin{equation}\label{defPL2}
\int_{I_{i}} \int_{J_{j}}\left(\calP_h(w(x,v)) -w(x,v)\right)
\varphi_{h}(x,v)\,dv\,dx= 0 \quad \forall \varphi_{h}\in
\mathbb{P}^{k}(I_{i})\otimes \mathbb{P}^{k}(J_{j})\;.
\end{equation}
Also from its definition, its $L^{2}$-stability follows immediately.
\section{Discontinuous Galerkin methods for the Vlasov-Poisson system: semi-discrete methods}\label{sec:3}
In this section, we introduce the DG methods we consider for approximating the Vlasov-Poisson system. The methods are  those proposed in  \cite{acs0,acs1}, but with some slight variation required for  practical computations. 
Following \cite{acs0,acs1} we first describe the schemes for the Vlasov equation, proposing several options to modify the methods in \cite{acs0} so that they allow for a feasible implementation.
We  then discuss the approximation of the Poisson problem (again following closely  \cite{acs0}).
We close the section by discussing the main properties of the introduced schemes. Throughout the whole section we focus on the space discretization. 
\subsection{Discontinuous Galerkin approximation to the Vlasov equation}\label{sec3:1}
We now des\-cri\-be the DG methods we consider to approximate the Vlasov equation \eqref{mod01}. 
For the time being, we assume we are given a finite element (conforming or nonconforming) approximation of degree $r$ to the electrostatic  field $E(x,t)=\Phi_x(x,t)$,  which we denote by $E_h \in \mathcal{W}_{h}$.  By $E_{h}^{i}$  we refer to its restriction to $I_{i}$. The properties and characterization of $E_h$ are discussed in next subsection. \\
We denote by $f_{h}(0)=\mathcal{P}_h (f(x,v,0))$ the approximation to the initial data $f(x,v,0)$ computed  using the orthogonal $L^{2}$-projection onto the space $\calZ_h^{k}$.  Since the Vlasov equation is a transport equation, we construct the DG method in the usual way:  given $E_h\in
\mathcal{W}_{h}$ find $f_h:[0,\tf]\lor \calZ_{h}^{k}$ such that
\begin{equation}\label{prob-disc}
\dyle\sum_{i=1}^{N_{x}}\dyle\sum_{j=1}^{N_{v}}\calB^{h}_{ij}(E_h;f_h,\varphi_h)=0
\quad \forall\varphi_h\in\calZ_{h}^{k}\;,
\end{equation}
where the bilinear form $\calB^{h}_{ij}(E_h;f_h,\varphi_h)$ is
defined for each $i$, $j$ and $\varphi_h\in\calZ_{h}^{k}$ as:
\begin{align}
\calB_{ij}(E_h;f_h,\varphi_h)=&\dyle{  \int_{\K_{ij}}
\frac{\partial f_h}{\partial t}\varphi_h \,dv\,dx
- \int_{\K_{ij}} v f_h \frac{\partial \varphi_h}{\partial x} \,dv\,dx + \int_{\K_{ij}} E_{h}^{i} f_{h} \frac{\partial \varphi_h}{\partial v}\,dv\,dx} &&\nonumber \\
&\dyle{+ \int_{J_{j}}\left[ (\widehat{(vf_h)} \varphi^{-}_h)_{i+1/2,v}-(\widehat{ (vf_h)}\varphi^{+}_h)_{i-1/2,v}  \right] dv } &&\label{method0}\\
&\dyle{- \int_{I_{i}}\left[
\left(\widehat{\left(E^{i}_{h}f_h\right)}
\varphi^{-}_h\right)_{x,j+1/2}-\left(\widehat{
\left(E^{i}_{h}f_h\right)}\varphi^{+}_h\right)_{x,j-1/2}  \right].
dx },&& \nonumber
\end{align}
In \eqref{method0} we have used the short hand notation given in
\eqref{notai0}. The numerical fluxes are defined using the upwind flux:
\begin{align}
\widehat{vf_h}&=\left\{\begin{array}{cc}
v\,  f_{h}^{-} & \mbox{  if   } v\geq 0\\
v\,  f_{h}^{+} & \mbox{  if   } v<0
\end{array}\right.  \qquad && \widehat{vf_h}=\av{vf_h}-\frac{|v|}{2}\jump{f_h}\;,\label{flux:v} \\
\widehat{E_{h}^{i}f_{h}} &=\left\{\begin{array}{cc}
E_h^{i}\, f_{h}^{+}& \mbox{  if   } \calP^{0}(E_h^{i})\geq 0\\
E_h^{i}\, f_{h}^{-} & \mbox{  if   } \calP^{0}(E_h^{i})<0
\end{array}\right. \qquad && \widehat{E_{h}^{i}f_{h}}= \av{E_{h}^{i}f_{h}}+\mbox{sign}\left(\calP^{0}(E_h^{i})\right)\cdot\frac{E_h^{i}}{2} \jump{f_h}\;. \label{a1}
\end{align}
At the boundary $\partial\O$,  the numerical fluxes are taken as
\begin{equation*}
(\widehat{vf_h})_{1/2,v}=(\widehat{vf_h})_{N_x+1/2,v} ,\quad
(\widehat{E_{h}^{i}f_{h}})_{x,1/2}=(\widehat{E_{h}^{i}f_{h}})_{x,N_v+1/2}=0
, \, \forall \,(x,v)\in\mathcal{I}\times \mathcal{J},
\end{equation*}
so that the periodicity in $x$ and the compactness in $v$ are
reflected. 
Note that the numerical fluxes as defined in \eqref{flux:v} and \eqref{a1} are consistent.\\
Observe that, unlike in \cite{acs0,acs1} the definition \eqref{a1} of the upwind flux  $\widehat{E_{h}^{i}f_{h}}$ involves a condition on the sign$(\mathcal{P}^{0}(E_h^{i}))$, rather than on sign$(E_h^{i})$. Obviously if $E_h^{i}$ does not vanish inside $I_i$ (and so it does not change sign inside $I_i$), sign$(E_h^{i})=\mbox{sign}\left(\calP^{0}(E_h^{i})\right)$ and the classical definition of the upwind flux is recovered.  However, since $E_h$ is a piecewise polynomial of degree $k+1$  approximation to the electrostatic field, it will in general change sign in some elements $I_i$ of the partition $\mathcal{I}_h$. The classical definition would require to construct (at each time step) a partition of $\Omega_x$ that adapts to the changes of sign of $E_h$ by locating the zeros of $E_h$ at nodes of the desired partition. Such process, although feasible in one dimension, might become too expensive and complicate unnecessarily  the whole solution method for the Valsov-Poisson system (specially in higher dimensions).
The definition \eqref{a1} is considered for computational purposes. It allows to avoid  
computing the zeros of $E_h$  and  re-meshing, at each time step, the partition $\mathcal{I}_h$.

For our computations of the one-dimensional problem we have also examined  two other  variants of the numerical flux  $\widehat{E_{h}^{i}f_{h}}$ defined in \eqref{a1}, that do not require re-meshing, although they require a control on the sign of $E_h$. We also show how this control on the sign$(E_h)$ can be done efficiently.  The first variant we consider is given by 
\begin{equation}\label{a2}
\widehat{E_{h}^{i}f_{h}} =\left\{\begin{aligned}
E_h^{i}\, f_{h}^{+}& \qquad\mbox{  if   } E_h^{i}> 0&&\\
E_h^{i}\, f_{h}^{-} &\qquad \mbox{  if   } E_h^{i}<0&&\\
\av{E_h^{i}\, f_h}_{\omega} & \qquad\mbox{  if   }\exists\, x^{\ast}  \in I_i \quad \mbox{such that} \quad E_h^{i}(x^{\ast})=0\;&&
\end{aligned}\right. 
\end{equation}
where we have used the weighed average
\begin{equation}
\av{E_h^{i}\, f_h}_{\omega} =\omega^{+} E_h^{i}f_h^{+}+ \omega^{-} E_h^{i}f_h^{-} \qquad \omega^{+}\;, \, \omega^{-} \in [0,1]\quad\omega^{+}+\omega^{-}=1.
\end{equation}
The parameter $\omega$ should be chosen so that the amount of upwind is tuned. Although based on heuristics, in our computations we have found that a good choice is given by
\begin{equation}\label{def:w}
		\omega^+ = \frac{| \max_{I_i} E_h |}{ | \max_{I_i} E_h | + | \min_{I_i} E_h |} \;,
		\qquad
		\omega^- = \frac{| \min_{I_i} E_h |}{ | \max_{I_i} E_h | + | \min_{I_i} E_h |}\;.	
		\end{equation}		
The definition of the numerical flux  \eqref{a2} can be rewritten in the compact form:
\begin{equation}\label{a2:b}
\widehat{E_{h}^{i}f_{h}} =\left\{\begin{aligned}
 \av{E_{h}^{i}f_{h}}+\frac{|E_h^{i}|}{2} \jump{f_h} &\qquad \mbox{  if   }\nexists\, x^{\ast}  \in I_i \quad \mbox{such that} \quad E_h^{i}(x^{\ast})=0\; &&\\
\av{E_{h}^{i}f_{h}}+E_h^{i} (\omega^{+}-\frac{1}{2}) \jump{f_h} & \qquad\mbox{  if   }\exists\, x^{\ast}  \in I_i \quad \mbox{such that} \quad E_h^{i}(x^{\ast})=0\;&&
\end{aligned}\right. 
\end{equation}
Observe that this definition of the numerical flux is also consistent.\\
The last variant we consider is defined by:
\begin{equation}\label{aa00}
\widehat{E_{h}^{i}f_{h}} =\left\{\begin{array}{cc}
E_h^{i}\, f_{h}^{+}& \mbox{  if   } E_h^{i}> 0\\
E_h^{i}\, f_{h}^{-} & \mbox{  if   } E_h^{i}<0\\
\calP^{0}(E_h^{i}) f_{h}^{+}& \mbox{  if   } \calP^{0}(E_h^{i})> 0 \mbox{ and  } \exists\, x^{\ast}  \in I_i \quad \mbox{such that} \quad E_h^{i}(x^{\ast})=0 \\
\calP^{0}(E_h^{i}) f_{h}^{-}& \mbox{  if   } \calP^{0}(E_h^{i})< 0 \mbox{ and  } \exists\, x^{\ast}  \in I_i \quad \mbox{such that} \quad E_h^{i}(x^{\ast})=0. \\
\end{array}\right. 
\end{equation}
Note that the numerical flux defined above in \eqref{aa00} is not consistent (it fails to be consistent in those elements where $E_h^{i}$ changes sign, where we commit an error of order $O(h)$). 


Notice that both the weighted average approach \eqref{a2} and the last definition \eqref{aa00} require the knowledge of those elements of the partition $\mathcal{I}_h$ where $E_h$ vanishes. This information can be obtained very easily (at least in one dimension), by checking the sign of coefficients of  $E_h$ expanded in a basis with  Bernstein polynomials\footnote{Bernstein polynomials are non-negative at everypoint of their domain}. See the Appendix \ref{ap0} for further details on Bernstein polynomials and how the detection of change of sign is implemented. 

Mimicking \eqref{rho}, we define the discrete density, $\rho_h(x,t)$:
\begin{equation}\label{rho:disc}
\rho_{h}(x,t)= \int_{\mathcal{J}} f_h(x,v,t)\,dv
=\dyle{\sum_{j}\int_{J_{j}} f_h(x,v,t)\,dv} \quad \forall\,\, x\,
\in \, \mathcal{I},\quad  \forall\,\, t\, \in \, \,[0,\tf].
\end{equation}
One of the nice properties of DG schemes, is that the conservation of the total mass is satisfied by construction.  Next Lemma guarantees that the DG scheme \eqref{prob-disc}-\eqref{method0} with fluxes \eqref{flux:v} and either \eqref{a1} or \eqref{a2} preserves the total charge:
\begin{lemma}\label{massC}
{\bf Particle or Mass Conservation:}  Let $f_h \in
\mathcal{C}^{1}([0,\tf]; \calZ_h^{k})$, with $k\geq 0$, be the DG
approximation to $f$, satisfying
\eqref{prob-disc}-\eqref{method0}, with numerical fluxes defined as in \eqref{flux:v} and either \eqref{a1} or \eqref{a2} or \eqref{aa00}. Then, for all $t\in[0,\tf]$,
\begin{equation}\label{massC0}
\dyle\sum_{i,j}\int_{\K_{ij}} f_h(t)\,dv\,dx
=\dyle\sum_{i,j}\int_{\K_{ij}}f_h(0)\,dv\,dx=\dyle\sum_{i,j}\int_{\K_{ij}}
f_0 \,dv\,dx=1.
\end{equation}
\end{lemma}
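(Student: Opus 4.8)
The strategy is the standard DG mass-conservation argument: test the scheme with the constant function $\varphi_h \equiv 1$, which is an admissible test function since $\mathbb{P}^0(\K_{ij}) \subset \Q^k(\K_{ij})$ for every $k \geq 0$, so that $1 \in \calZ_h^k$. Summing the local bilinear forms in \eqref{prob-disc}, almost every term collapses: the two volume terms $\int_{\K_{ij}} v f_h \, \partial_x \varphi_h$ and $\int_{\K_{ij}} E_h^i f_h \, \partial_v \varphi_h$ vanish identically because $\partial_x 1 = \partial_v 1 = 0$. This leaves the time-derivative term $\frac{d}{dt}\sum_{i,j}\int_{\K_{ij}} f_h \, dv\, dx$ and the two sets of edge terms (the $x$-edge fluxes over the $J_j$ intervals and the $v$-edge fluxes over the $I_i$ intervals).

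First I would handle the $x$-edge terms. With $\varphi_h \equiv 1$ the traces $\varphi_h^\pm$ are both $1$, so the edge contribution from cell $\K_{ij}$ is $\int_{J_j}\big[(\widehat{vf_h})_{i+1/2,v} - (\widehat{vf_h})_{i-1/2,v}\big]\,dv$. Summing over $i$ from $1$ to $N_x$ telescopes, leaving only the boundary fluxes at $i=1/2$ and $i=N_x+1/2$; by the periodic boundary prescription $(\widehat{vf_h})_{1/2,v} = (\widehat{vf_h})_{N_x+1/2,v}$ these cancel. Here the key point is merely that the \emph{same} single-valued numerical flux appears with opposite signs in the two cells sharing an interior edge — which holds for all three flux choices \eqref{a1}, \eqref{a2}, \eqref{aa00}, since in each case $\widehat{E_h^i f_h}$ (and likewise $\widehat{vf_h}$) is a well-defined function on the edge, independent of which neighbouring cell one evaluates from. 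Identically, the $v$-edge terms telescope over $j$ and the remaining boundary fluxes vanish outright because $(\widehat{E_h^i f_h})_{x,1/2} = (\widehat{E_h^i f_h})_{x,N_v+1/2} = 0$ by the compact-support prescription. (One subtlety for the $v$-fluxes: the cell indices run over a fixed $i$, and the flux uses $E_h^i$, which is the same in both cells $I_i\times J_j$ and $I_i\times J_{j+1}$ sharing the edge $\{v = v_{j+1/2}\}$, so single-valuedness still holds and the telescoping goes through.)

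What survives is $\frac{d}{dt}\sum_{i,j}\int_{\K_{ij}} f_h(t)\,dv\,dx = 0$, hence $\sum_{i,j}\int_{\K_{ij}} f_h(t)\,dv\,dx$ is constant in $t$. Evaluating at $t=0$, where $f_h(0) = \calP_h(f_0)$, and using that the two-dimensional $L^2$-projection $\calP_h$ preserves cell averages (take $\varphi_h \equiv 1$ in \eqref{defPL2}), we get $\sum_{i,j}\int_{\K_{ij}} f_h(0)\,dv\,dx = \sum_{i,j}\int_{\K_{ij}} f_0\,dv\,dx = \int_\Omega f_0\,dv\,dx = 1$, the last equality being the compatibility condition \eqref{bc00} on the initial datum.

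**Main obstacle.** There is no deep difficulty; the whole proof is bookkeeping. The one place warranting care is verifying that for each of the three flux variants the numerical flux on an interior edge is genuinely single-valued (so that the telescoping is legitimate) — in particular for \eqref{a2} and \eqref{aa00}, whose case distinctions depend on quantities ($E_h^i$, $\calP^0(E_h^i)$, the presence of a zero of $E_h^i$ inside $I_i$) that are all intrinsic to the element $I_i$ and hence to the edge, not to the choice of neighbouring cell. Once that is noted, the cancellations are automatic, and consistency of the flux plays no role whatsoever in mass conservation — which is exactly why \eqref{aa00}, despite being inconsistent, still appears in the statement.
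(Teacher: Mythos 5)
Your proof is correct and follows essentially the same route as the paper's: test with $\varphi_h\equiv 1$, let the volume terms vanish, telescope the single-valued fluxes using periodicity in $x$ and compact support in $v$, and identify the initial mass via the $L^2$-projection and the normalization \eqref{bc00}. Your explicit check that single-valuedness of the flux (rather than consistency) is what drives the telescoping for all three variants \eqref{a1}, \eqref{a2}, \eqref{aa00} is a worthwhile remark that the paper leaves implicit.
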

Although standard,  we provide here the proof of the above Lemma for the sake of completeness.
\begin{proof}
({\it Proof of Lemma \ref{massC}}). Since $f_h(0)=\calP_h(f_0)$ it follows from the mass conservation of the continuous VP system  \eqref{bc00} and the definition of the $L^{2}$-projection \eqref{defPL2} that
\begin{equation}\label{masf0}
\dyle\sum_{i,j}\int_{\K_{ij}}f_h(0)\,dv\,dx=\dyle\sum_{i,j}\int_{\K_{ij}}\calP_h(f_0)\,dv\,dx=\dyle\sum_{i,j}\int_{\K_{ij}}
f_0 \,dv\,dx=1.
\end{equation}
Now, let $\K_{ij}$ be any arbitrary but fixed element in $\Th$. By setting in \eqref{method0} $\varphi_h=1$ in $\K_{ij}$ and $\varphi_h=0$ elsewhere we find,
\begin{align*}
\calB_{ij}(E_h{\rosso ; } f_h,1)=&\,\frac{d}{dt} \int_{\K_{ij}} f_{h} \,dv\,dx +\int_{J_{j}} [\widehat{(vf_h)}_{i+1/2,v}-\widehat{(vf_h)}_{i-1/2,v}]\,dv &&\\
&- \int_{I_{i}} [\widehat{(E^{i}_h
f_h)}_{x,j+1/2}-\widehat{(E^{i}_hf_h)}_{x,j-1/2}]\,dx\;, \qquad &&
\end{align*}
where we have already used that such $\varphi_h$ obviously satisfies $(\varphi_h)_{i+1/2,v}^{-}=(\varphi_h)_{i-1/2,v}^{+}=1$ at the boundaries of $\K_{ij}$.
Now, the above equation obviously holds for any $i,j$, since the choice of $\K_{ij}$ was arbitrary.
Therefore summing over all $i$ and $j$ the above equation, the flux terms telescope and
there is no boundary term left because of the periodic (for $i$)
and compactly supported (for $j$) boundary conditions. Substitution now in
 \eqref{prob-disc} gives
\begin{equation*}
0=\dyle\sum_{i,j} \calB_{ij}(E_h;f_h,1)=\frac{d}{dt}
\dyle\sum_{i,j}\int_{\K_{ij}} f_{h} \,dv\,dx=0,
\end{equation*}
and so integrating in time and using \eqref{masf0} we reach \eqref{massC0}.
\end{proof}
\subsection{Finite element approximation of the electrostatic field $E$}\label{sec:e2}
We now describe  the methods we consider for approximating the electrostatic field $E(x,t)=\Phi_x(x,t)$. The discrete Poisson problem reads:
\begin{equation}\label{poiss-disc0}
(\Phi_{h})_{xx}=1-\rho_h \quad  x \in \Omega_x, \qquad
\Phi_{h}(1,t)=\Phi_{h}(0,t).
\end{equation}
The well posedness of the above discrete problem is guaranteed by
\eqref{massC0} from Lemma \ref{massC} which in particular implies
\begin{equation}\label{pois2}
(\Phi_{h})_{x}(1,t)=(\Phi_{h})_{x}(0,t).
\end{equation}
To ensure the uniqueness of the solution we also set $\Phi_{h}(0,t)=0$.\\
Since in the Vlasov equation, the transport depends on $E$, to approximate \eqref{poiss-disc0} we consider a mixed finite element approach. For that purpose, we first rewrite problem \eqref{poiss-disc0} as a first order system:
\begin{equation}\label{poiss-disc}
E_h=\frac{\partial \Phi_{h}}{\partial x} \quad  x \in \Omega_x;
\qquad -\frac{\partial E_h}{\partial x}=\rho_h-1 \quad  x \in
\Omega_x\,
\end{equation}
with boundary condition $\Phi_{h}(0,t)=\Phi_{h}(1,t)=0$. We consider the following methods:
\subsubsection{Mixed Finite element approximation:} 
we consider the one-dimensional version of Raviart-Thomas elements,
RT$_{k}\,\, k\geq 1$ \cite{rav-tom0,brezzi-fortin}. In 1D the
mixed finite element spaces turn out to be the $(W_{h}^{k+1},
V_{h}^{k})$-finite element spaces. Note that in particular,
$\frac{d}{dx}(W_h^{k+1})=V_h^{k}$. For $k\geq 0$ the scheme reads:
find $(E_h, \Phi_{h} ) \in W_{h}^{k+1} \times V_{h}^{k}$ such that
\begin{align}
& \int_{\calI} E_h  \,z \,dx  +\int_{\calI} \Phi_{h} \, z_{x} \,dx =0 \quad  &\forall\, z\in W_{h}^{k+1}, \label{rt:a}&&\\
&-\int_{\calI} (E_h)_{x} \, p \,dx   = \int_{\calI} (\rho_h -1) p
\,dx &\quad  \forall\, p\in V_{h}^{k}. \label{rt:b}&&
\end{align}
We also refer to \cite{babuska-narashi}, where the lowest order case was studied for the one-dimensional Poisson problem.
\subsubsection{Local Discontinuous Galerkin (LDG) method:}\label{ldg:0} 
The DG approximation to the first order system \eqref{poiss-disc} reads: find $(E_h, \Phi_{h} ) \in V_{h}^{k+1}
\times V_{h}^{k+1}$ such that for all $i$:
\begin{align}
& \int_{I_{i}} E_h  z \,dx  = -\int_{I_{i}} \Phi_{h}  z_{x} \,dx + [ (\widehat{\Phi_{h}}z^{-})_{i+1/2}-(\widehat{\Phi_{h}}z^{+})_{i-1/2}]   &\forall\, z\in V_{h}^{k+1}, \label{ldg0:a}&&\\
&\int_{I_{i}} E_h  p_{x} \,dx  - \left[
(\widehat{E_h}p^{-})_{i+1/2}-(\widehat{E_h}p^{+})_{i-1/2}\right] =
\int_{I_{i}} (\rho_h-1) p \,dx & \forall\, p\in V_{h}^{k+1}\;.
\label{ldg0:b}&&
\end{align}
The numerical fluxes $(\widehat{\Phi_{h}})_{i-1/2}$ and $(\widehat{E_h})_{i-1/2}$
for the LDG method are defined by:
\begin{equation}\label{flux:DG0}
\left\{\begin{aligned}
(\widehat{\Phi_{h}})_{i-1/2}&=\av{\Phi_{h}}_{i-1/2} -c_{12}\jump{\Phi_{h}}_{i-1/2}\;,&&\\
(\widehat{E_h})_{i-1/2} &=\av{E_h}_{i-1/2}
+c_{12}\jump{E_h}_{i-1/2}+c_{11}\jump{\Phi_{h}}_{i-1/2}\;,&&
\end{aligned}\right.
\end{equation}
where $c_{11}=c\,
(k+1)^{2} h_x^{-1}$ and  $|c_{12}|=1/2$. At
the boundary nodes due to periodicity in $x$ we impose
\begin{equation*}
(\widehat{\Phi_h})_{1/2}=(\widehat{\Phi_h})_{N_x+1/2} ,\quad
(\widehat{E_{h}})_{1/2}=(\widehat{E_{h}})_{N_x+1/2} .
\end{equation*}
The method was first introduced in \cite{cockshu98} for a time dependent convection diffusion problem with $c_{11}=O(1)$. For the Poisson problem it has been considered in \cite{faithcock} in the one-dimensional  case, and in \cite{ilaria0} for higher dimensions. 
\subsubsection{Energy preserving LDG method LDG(v):} We consider the DG approximation as given in \eqref{ldg0:a}- \eqref{ldg0:b},  with numerical fluxes defined by:
\begin{equation}\label{flux:DGv}
\left\{\begin{aligned}
(\widehat{\Phi_{h}})_{i-1/2}&=\av{\Phi_{h}}_{i-1/2} -\frac{\txt{sign}(v)}{2}\jump{\Phi_{h}}_{i-1/2}\;,&&\\
(\widehat{E_h})_{i-1/2} &=\av{E_h}_{i-1/2}
+\frac{\txt{sign}(v)}{2}\jump{E_h}_{i-1/2}+c_{11}\jump{\Phi_{h}}_{i-1/2}\;,&&
\end{aligned}\right.
\end{equation}
with $c_{11}$ chosen as before, i.e., $c_{11}=c\,
(k+1)^{2} h_x^{-1}$. Note that the above method requires the solution of two Poisson problems; one for $v>0$ and one for $v<0$. Being one dimensional, this can be efficiently done without increasing the overall cost of the computation.\\
This choice of numerical fluxes was introduced in \cite{acs0} and extended to multidimensions in \cite{acs1}. In both works it was shown that when combined with the classical standard upwind DG approximation for the Vlasov equation, the resulting semi-discrete DG method conserves the discrete total energy of the system. 
\subsection{Properties of the numerical methods}
We now briefly comment on the properties of the DG methods presented. 
The methods presented contain a small modification of the schemes introduced and analyzed in \cite{acs0,acs1} in a  few elements. More precisely, the numerical flux $\widehat{E_h^{i}f_h}$ in the DG scheme for the Vlasov equation has been redefined through \eqref{a1} and \eqref{a2} (to allow for practical computations), in a few elements. Therefore, we expect that the resulting schemes will show similar stability, conservation and approximation properties.

\noindent {\bf $\bullet$ Mass conservation:} As we already showed in Lemma \ref{massC}, the total charge of the system is conserved. 

\noindent {\bf $\bullet\,\, L^{2}$-stability:}  we now comment on the $L^{2}$-stability of the methods. In \cite{acs0,acs1} the authors prove $L^{2}$-stability for the DG schemes proposed there.  Here, due to the modification of the numerical flux  $\widehat{(E^{i}_hf_h)}$ in those elements where $E_h$ might change sign, one cannot prove $L^{2}$-stability (or at least the usual proof will not go through). Still, since the  flux is modified only  in a few elements,  it is reasonable to expect  the methods to behave as if they were $L^{2}$-stable. This will be verified in the numerical experiments section \ref{sec:6}.

\noindent {\bf $\bullet$ Energy conservation:}  
Finally, we define the discrete total energy:
\begin{equation}\label{def:enh}
\mathcal{E}_h(t) = \int_{\Omega} \frac{|v|^2}{2} f_h(x,v,t) dv\, dx + 
	 \int_{\Omega_x} \frac{1}{2} | E_h(x,t) |^2 dx +
	 \frac{(k+1)^2}{h_x} \sum_{i=0}^{N_x} \jump{\Phi_h}_{i+1/2}^2 \quad \forall\, t\in [0,\tf]\;.
\end{equation}
Next result shows that also for the methods considered here with the modified fluxes \eqref{a1} and \eqref{a2}, when  they are combined with the LDG(v) method for approximating the Poisson problem, the total energy of the Vlasov Poisson system is preserved. 
\begin{theorem}[Energy conservation]\label{teo:ene} Let
$k\geq2 $ and let $((E_{h},\Phi_h), f_{h})$ be the LDG(v)-DG
approximation belonging to $\mathcal{C}^{1}([0,T];(V_{h}^{k}\times
V_h^k)\times \calZ_{h}^{k})$ of the Vlasov-Poisson system
\eqref{mod01a}-\eqref{mod01b}, where $f_h$ is the solution of \eqref{prob-disc},
\eqref{method0}, with numerical fluxes \eqref{flux:v} and either \eqref{a1} or \eqref{a2}, and the approximation $(\Phi_h,E_h)$ solves of \eqref{ldg0:a}- \eqref{ldg0:b}, with 
numerical fluxes \eqref{flux:DGv}. 
Then, the total discrete energy is conserved in time,
\begin{equation}\label{eneId0}
\frac{d}{dt} \left( \dyle{\sum_{i,j} \int_{\K_{ij}}
\left| v \right|^{2}\,f_h(t)\,dv\,dz+\sum_{i}\int_{I_{i}} E_h(t)^{2}\,dx
+c_{11}\sum_{i}\jump{\Phi_{h}(t)}^{2}_{i-1/2}} \right) =0\;.
\end{equation}
\end{theorem}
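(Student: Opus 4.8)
The plan is to differentiate the left-hand side of \eqref{eneId0} in time and show that its kinetic contribution and its electric-plus-penalty contribution are equal and opposite, in close analogy with the continuous argument $\frac{d}{dt}\mathcal{E}=-\int_{\Omega_x}E\,J+\int_{\Omega_x}E\,J=0$, where $J=\int_{\Omega_v}v f\,dv$ is the current; the relevant discrete object is $J_h:=\int_{\Omega_v}v f_h\,dv$, which on each $I_i$ is a polynomial of degree $\le k$. \emph{Kinetic part.} Use $\varphi_h=|v|^2/2$ as test function in \eqref{prob-disc}-\eqref{method0}; this is legitimate \emph{exactly because $k\ge2$}, so that $|v|^2/2\in\calZ_h^k$, and this is the only place the hypothesis is needed. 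Since $|v|^2/2$ does not depend on $x$, the convective volume term $-\int v f_h\,\partial_x\varphi_h$ vanishes, and since $|v|^2/2$ is globally continuous, the numerical-flux sums over cells in both $x$ and $v$ telescope to boundary terms that vanish — by periodicity in $x$, and because $\widehat{E_h^i f_h}=0$ at $v=\pm L$ by compact support. What remains is
\[
\frac{d}{dt}\sum_{i,j}\int_{\K_{ij}}|v|^2 f_h\,dv\,dx=-2\sum_{i,j}\int_{\K_{ij}}v\,E_h^i f_h\,dv\,dx .
\]
This step is insensitive to the modification \eqref{a1}/\eqref{a2}: the $v$-interface terms cancel regardless of how $\widehat{E_h^i f_h}$ is defined, and the volume term still carries the genuine $E_h^i$; only single-valuedness of the flux is used.

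\emph{Electric and penalty part.} Differentiate the LDG(v) equations \eqref{ldg0:a}-\eqref{ldg0:b} (with fluxes \eqref{flux:DGv}) in time, and test the system together with its $t$-derivative against the pairs $(z,p)=(E_h,\partial_t\Phi_h)$ and $(\partial_t E_h,\Phi_h)$ (or an equivalent combination). Discrete integration by parts and the jump product rule $\jump{ab}=\av{a}\jump{b}+\jump{a}\av{b}$ make the $\pm\tfrac{\mathrm{sign}(v)}{2}\jump{\cdot}$ cross-terms of \eqref{flux:DGv} cancel, and collecting gives
\[
\tfrac12\,\frac{d}{dt}\Big(\sum_i\int_{I_i}E_h^2\,dx+c_{11}\sum_i\jump{\Phi_h}_{i-1/2}^2\Big)=\sum_i\int_{I_i}(\rho_h-1)\,\partial_t\Phi_h\,dx ,
\]
the discrete counterpart of $\tfrac12\tfrac{d}{dt}\int E^2=\int(\rho-1)\,\partial_t\Phi$; the jump penalty is carried in the discrete energy precisely so that this holds \emph{exactly}. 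Since only $|c_{12}|=\tfrac12$ entered the derivation, it applies whichever sign $\mathrm{sign}(v)$ takes, i.e.\ to each of the two LDG(v) fields.

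\emph{Coupling and conclusion.} It remains to identify the right-hand side of the last display with $\sum_{i,j}\int_{\K_{ij}}v\,E_h^i f_h\,dv\,dx$. For this, test \eqref{prob-disc}-\eqref{method0} with $\varphi_h=\Phi_h$ (and/or $\partial_t\Phi_h$), admissible because these are of degree $\le k$ in $x$ and constant in $v$, hence lie in $\calZ_h^k$; since $\partial_v\varphi_h=0$, the $E_h$-volume term and all $v$-interface terms disappear and summing over cells yields a discrete continuity identity linking $\sum_i\int\partial_t\rho_h\,\Phi_h$, $\sum_i\int J_h\,(\Phi_h)_x$ and the numerical current $\int_{\Omega_v}\widehat{vf_h}\,dv$ at the interfaces. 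Fed, together with the LDG(v) relations — which express $E_h$ as the discrete gradient of $\Phi_h$ with the one-sided flux oriented by $\mathrm{sign}(v)$, i.e.\ the same way the upwind flux $\widehat{vf_h}$ of \eqref{flux:v} is oriented — into the previous step, the interface/jump contributions coming from the Vlasov transport and from the Poisson solve coincide and cancel (this is exactly the role of the $\mathrm{sign}(v)$ flux and of the penalty), leaving the electric-plus-penalty rate equal to $+\sum_{i,j}\int_{\K_{ij}}v\,E_h^i f_h\,dv\,dx$, with the sum split by $\mathrm{sign}(v)$ and paired with the two LDG(v) fields (the ``two Poisson problems''). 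Adding this to the kinetic identity, the contributions $\mp 2\sum_{i,j}\int_{\K_{ij}}v\,E_h^i f_h\,dv\,dx$ cancel and \eqref{eneId0} follows.

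\emph{Main obstacle.} The delicate point is precisely this last cancellation: tracking all discrete integration-by-parts terms and numerical-flux jumps coming from the tested (time-differentiated) LDG(v) system and from testing Vlasov with $\Phi_h$/$\partial_t\Phi_h$, and verifying they cancel in pairs. This hinges on the specific choices $c_{12}=\mathrm{sign}(v)/2$ (so the Poisson upwinding is aligned with the characteristic direction of $v\,\partial_x$) and $c_{11}=c\,(k+1)^2/h_x$, and on carrying the $v>0$ / $v<0$ splitting of the current — and of the kinetic-energy flux — consistently through the computation, so that ``$E_h$'' in \eqref{eneId0} is read accordingly.
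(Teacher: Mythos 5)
Your proposal is correct and reconstructs, step by step, exactly the argument that the paper's own proof invokes by reference: the paper simply states that the result ``follows exactly the same steps as \cite[Theorem 5.1]{acs0}'' and that consistency of the modified fluxes \eqref{a1}--\eqref{a2} is all that is needed, whereas you carry out those steps (testing with $|v|^2/2$, which is where $k\ge2$ enters; the time-differentiated LDG(v) identity absorbing the $c_{11}\jump{\Phi_h}^2$ penalty; and the coupling via the discrete current $J_h$ and the $\mathrm{sign}(v)$-oriented fluxes). Your observation that the kinetic and continuity identities use only single-valuedness of $\widehat{E_h^i f_h}$ at the $v$-interfaces (the flux sums telescope against $v$-continuous test functions, while the volume term carries the exact $E_h^i$) is in fact a sharper accounting than the paper's appeal to consistency; the only loose end in your sketch is the $\Phi_h$ versus $\partial_t\Phi_h$ bookkeeping in the coupling step, which you flag yourself and which is resolved in \cite[Theorem 5.1]{acs0} by the symmetric combination of test functions.
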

\begin{proof}
The proof follows exactly the same steps as the proof of \cite[Theorem 5.1]{acs0}. 
All the arguments used there carried over for the methods given here with the numerical flux $\widehat{E^{i}_hf_h}$ modified as in \eqref{a1} and \eqref{a2}. The reason is that the definitions of the fluxes \eqref{a1} and \eqref{a2} are consistent, which is the only property  needed for $\widehat{E^{i}_hf_h}$ to ensure the conservation of the total energy. We omit the details for the sake of conciseness.
\end{proof}
As already noticed in \cite{acs0,acs1}, Theorem \ref{teo:ene} requires the use of polynomial degree $k\geq 2$. We will show in the numerical experiments that this restriction is not technical, but it is indeed required in practice.
\section{Fully discrete method and implementation details}\label{sec:4}
In this section we describe the time integration we consider and we discuss the details on the final solution algorithm. In last part of the section we  also comment on the implementation of the algorithm.
\subsection{Time integration}
The DG methods presented so far are semi-discrete. For the time discretization we consider a simple fourth order explicit Runge Kutta (RK) method,  the so-called RK4 or classic Runge-Kutta \cite{hairer0}. While for conservation laws and other general nonlinear hyperbolic problems,  in order to have good resolution of the shocks and discontinuities a {\it total variation diminishing} (TVD) RK should be used, here we have observed no significant differences (see Fig. \ref{fig:sim2d-rk2tvd}), probably due to the inherent smoothness of the solution. Moreover, since we are concerned with high order methods (in space), the time integration should be accomplished also with some high order time integration scheme. As is well known, a fourth order TVD RK, would require for the computation of the internal stages, the evaluation of the operator and its adjoint, due to the presence of some negative coefficients in the corresponding TVD-RK tableau. Therefore, the cost (and storage) of the overall procedure would substantially increase, and from the experiments carried out, we have no numerical evidence of any essential benefit. This issue deserves surely a further theoretical study, that we plan to do in the future. For these reasons, although in general, is much safer to use a TVD Runge-Kutta method for solving hyperbolic problems, we have stick to  the classical fourth order RK for the simulation of the Vlasov-Poisson system.

We now describe the actual implementation. For the  Runge-Kutta integrator, we take a uniform partition of the time interval $[0,\tf]$, 
$\{0=t_0<t_1<\ldots <t_{n} <\ldots t_{N_t}=\tf\}$ with time step 
$\Delta t:=t_{n+1}-t_{n}$. The RK4 solver updates the current approximate solution, from $f_h(t_n)$ to $f_h(t_{n+1})$, in four internal stages. 
The method is fourth order accurate in time, so we expect the errors coming from the time discretization does not affect much the ones coming from the space discretization. After choosing basis functions in \eqref{prob-disc}-\eqref{method0} we arrive to a system of ODE's.
\begin{equation}\label{eq:finalODE}
	M\frac{d \mathbf{f}_h }{dt} = 
	\mathcal{L}( \mathbf{f}_h, \mathbf{E}_h,t),
\end{equation}
where $M$ denotes the mass matrix, and  $\mathbf{f}_h$ and $\mathbf{E}_h$ are the vector representation of the unknowns $f_h$ and $E_h$, respectively, in the chosen basis.The vector of unknowns, $\mathbf{f}_{h}$, is arranged so that those degrees of freedom corresponding to the same element, say $T_{ij}$, are in the same block. Hence,  the structure of $\mathbf{f}_{h}$  looks as follows:
\begin{equation*}
	\mathbf{f}_{h} := 
	\left[\left( \mathbf{f}_{h} \right)_{T_{11}},\left( \mathbf{f}_{h} \right)_{T_{12}}, \hdots, \left( \mathbf{f}_{h} \right)_{T_{21}}, \hdots, \left( \mathbf{f}_{h} \right)_{T_{N_{x},N_{y}}}\right]^T.
\end{equation*}
The mass matrix, $M$, is then block diagonal (in Lagrange basis) with  each block of  size $(k+1)\cdot(k+1)$, corresponding to the degrees of freedom in each element $T_{ij}$. 
Therefore, the application of  the RK4 solver to \ref{eq:finalODE} can be done elementwise since it is only  needed  the inversion of the local mass matrices of size $(k+1)\cdot(k+1)$ where $k$ is a moderate integer, i.e. $k \leq 12$.  This property allows performing the time marching from $t^n$ to $t^{n+1}$ in parallel as we describe later. The local matrix inversion is done before starting the time integration and it is stored and saved for re-use in the whole computation.\\
To advance in time from  $(t^{n}, \mathbf{f}_h^n, \mathbf{E}_h^{n})$ to  $(t^{n+1}, \mathbf{f}_h^{n+1}, \mathbf{E}_h^{n+1})$ the RK4 method proceeds in $4$-stages:  
\begin{equation*}
\begin{array}{l}
	\mathbf{k}_1 := \Delta t \, M^{-1} \mathcal{L}( \mathbf{f}_h^n, \mathbf{E}_h^n, t_n) 
	\bigskip \\ 
	\mathbf{k}_2 := \Delta t \, M^{-1} \mathcal{L}
	( \mathbf{f}_h^n + {\mathbf{k}_1}/2, \mathbf{E}_h^{n+1/4}, t_n + \Delta t/2) 
	\bigskip \\ 
	\mathbf{k}_3 := \Delta t \, M^{-1} \mathcal{L}
	( \mathbf{f}_h^n + {\mathbf{k}_2}/2, \mathbf{E}_h^{n+2/4}, t_n + \Delta t/2) 
	\bigskip \\ 
	\mathbf{k}_4 := \Delta t \, M^{-1} \mathcal{L}
	( \mathbf{f}_h^n + {\mathbf{k}_3}, \mathbf{E}_h^{n+3/4}, t_n + \Delta t) 	
	\bigskip \\
	\mathbf{f}_h^{n+1} := \mathbf{f}_h^{n} +  \frac{1}{6}
	(\mathbf{k}_1 + 2 \mathbf{k}_2 + 2 \mathbf{k}_3 + \mathbf{k}_4)
\end{array} ,
\end{equation*}
where
\begin{equation*}
	\mathbf{E}_h^{n + i/4} := 
	\mathtt{SolvePoissonUsing}[ \rho^{n+i/4} ] \quad \forall i=1,...3,
\end{equation*}
and $\rho^{n+i/4}$ is computed using the corresponding approximate solution at that stage.\\
Since Vlasov equation is a 2-dimensional transport problem, we choose $\Delta t$ such that
\begin{equation*}
	\Delta t \propto 
	\min_{\forall i,j}\left( h_i^x\, , \, h_j^v \right) / \max{ \{\|E_h\|_{L^{\infty}(\Omega_x)}, 2L\}}\;, 
\end{equation*}
In the actual implementation we have tuned the $\Delta t$ to ensure that the errors coming from the time discretization do not pollute the errors from the space discretization (that we want to observe). In  the experiments we have also used the TVD RK2 time integrator \cite{tvd0,tvd12}, to compare the results obtained with the two time integrators, and to see if there is possible advantage. (See the experiments on the Nonlinear Landau-Damping in Section \ref{sec:6}). It is the following second order RK scheme:
\begin{equation}\label{rk2:TVD}
\begin{array}{l}
	\mathbf{k}_1 := \mathbf{f}_h^n+ \Delta t \, M^{-1} \mathcal{L}( \mathbf{f}_h^n, \mathbf{E}_h^n, t_n) 
	\bigskip \\ 
	\mathbf{f}_h^{n+1} := \dfrac{1}{2} \mathbf{f}_h^n  +  \frac{1}{2}
	\mathbf{k}_1 + \frac{1}{2}\Delta t \, 	M^{-1} \mathcal{L}
	(  {\mathbf{k}_1}, \mathbf{E}_h^{n+1/2}, t_n + \Delta t) 
	\end{array} 
\end{equation}
where
\begin{equation}
	\mathbf{E}_h^{n + 1/2} := 
	\mathtt{SolvePoissonUsing}[ \rho^{n+1/2} ],
\end{equation}
with the same expression for $\Delta t$. We have written it in the above form  to  evidence that the time advancing is done by means of linear combination of intermediate Euler steps (see \cite{tvd0} for further details).
\subsection{Solution algorithm}
We now provide the pseudo-algorithm used in our computations to solve for $\{(f_h^{n},E_h^{n})\}_{n>0}$ given  $f_h^{0}=\calP_h(f(x,v,0))$.
\begin{enumerate}
	\item Given $t_f$, $\Delta t$, $f(x,v,0)$, $k$ (degree of polynomial), $N_x$ and $N_v$
	\begin{enumerate}
		\item Build a 2D mesh suitable for DG scheme using $N_x, N_v, k$
		(Vlasov part)
		\item Build a 1D mesh suitable for DG scheme using $N_x, (k+1)$
		(Poisson part)
		\item Initialize the $f_h^{0}=\calP_h(f(x,v,0))$
	\end{enumerate}
	\item at each time: $t_n, \forall n=0,...,N_t$
	\begin{enumerate}
		\item at each stage of RK4: $s=1,...,4$
		\begin{enumerate}
			\item Solve the Poisson problem and computing $E_h^{n+(s-1)/4}$ 
			\item compute $k_{s}$ using 
			$\mathcal{L} \left( f_h^{n+(s-1)/4}, E_h^{n+(s-1)/4}, t_{s} \right) $
		\end{enumerate}
		\item compute the $f_h^{n+1}$ using $\{ k_{s} \}$.
	\end{enumerate}
	\item Give the approximate solution at $t_f: f_h(t_f)$
\end{enumerate}
Since the Poisson problem is one dimensional, its solution  (Step 2.a (i)) is done using an exact solver. This is pre-computed at the initial time step, and then at each Step 2.a (i), only two matrix vector multiplication are required. For higher dimensions, such solution process should be done iteratively and with an appropriate preconditioner \cite{guido-jay,paola-blanca1}.
We should mention during actual implementation and verification of the code, we have observed that in order to guarantee accuracy and to ensure the conservation of mass and the total energy, it is essential  in every time step to  compute $E_h$ at each stage  of the RK method. If on the contrary, one uses $E^n_h$ at the all stages involved in the evolution from $t_{n}$ to $t_{n+1}$, the high order accuracy and conservation properties of the methods are lost.
\subsection{Comments on the implementation}
Now we describe a few issues related to the practical implementation of the scheme. 
As mentioned before, the fact of using an explicit ODE solver for \eqref{eq:finalODE} together with the block diagonal structure of the mass matrix $M$ allows to perform time marching in a parallel manner. Here we briefly demonstrate the structure of parallelization algorithm for time marching which is very useful when the size of the semi-discrete form becomes very big. Denoting the number of computational threads by $N_{\tt CPU}$, we partition the elements in $\mathcal{T}_{h}$ into $\{ \mathcal{T}_{h,m} \}$ for $m=1,..., N_{\tt CPU}$ where 
\begin{equation*}
	\begin{array}{l}
		\mathcal{T}_{h} = \cup_{m=1}^{N_{\tt CPU}} \mathcal{T}_{h,m},
		\\
		\emptyset = \cap_{m=1}^{N_{\tt CPU}} \mathcal{T}_{h,m}.
	\end{array}
\end{equation*}
Moreover in the same way we decompose the vector of degrees of freedom $\bf f_h$ into $\{ \bf f_{h,m} \}$ and $\bf k_s$ into $\{ \bf k_{s,m} \}$ for $m=1,...,N_{\tt CPU}$. Then for the parallel time marching we have
\begin{enumerate}
	\item Given $N_{\tt CPU}$ (number of threads), $\{ \mathcal{T}_{h,m} \}$ (partition of $\mathcal{T}_{h}$)
	\item at each time: $t_n, \forall n=0,...,N_t$
	\begin{enumerate}
		\item at each stage of RK4: $s=1,...,4$
		\begin{enumerate}
			\item Solve the Poisson problem and compute $E_h^{n+(s-1)/4}$ 
			\item on each partition $\mathcal{T}_{h,m} $: $m=1,...,N_{\tt CPU}$
			\begin{enumerate}
				\item compute $k_{s,m}$ using $\mathcal{L} \left( f_h^{n+(s-1)/4}, E_h^{n+(s-1)/4}, t_{s} \right) $		
			\end{enumerate}
			\item construct $k_{s} = \cup_{m}^{N_{\tt CPU}} k_{s,m}$ .
		\end{enumerate}
		\item compute the $f_h^{n+1}$ using $\{ k_{s} \}$.
	\end{enumerate}
	\item Give the approximate solution at $t_f: f_h(t_f)$
\end{enumerate}
Note that Step 2.a.ii.~is the parallel part of the algorithm. The structure of the above algorithm is well-suited for \texttt{OpenMP} library for parallelization. We have run the algorithm on maximum 8 thread using \texttt{OpenMP} when the size of system becomes very big. In order to boost the number of threads one should modify the algorithm structure to run under \texttt{MPI} library which was not needed during our numerical experiments.
\section{Numerical experiments}\label{sec:6}
In this section we present several numerical test to assess the performance of the introduced methods and to validate their properties. We start with a convergence study using a forced VP system for which the exact solution can be explicitly computed. Then, we present the results obtained for simulations of some of the classical benchmark test for the VP system with periodic boundary conditions. More precisely the tests we consider are:
\begin{enumerate}
\item Convergence study: forced Vlasov-Poisson system,
\item 1D weak Landau damping,
\item Nonlinear (strong) Landau damping,
\item Two stream instability I,
\item Two stream instability II.
\end{enumerate}
To study and asses the ability of the proposed DG schemes to conserve the physical invariants of the continuous Vlasov-Poisson system, we have computed for the different tests, the time evolution of the deviations from their initial values of the quantities that are
conserved by the continuous Vlasov-Poisson system (see Section \ref{sec2:a}). The value of these quantities at time $t=0$ can be always computed using the discrete initial data (which is the $L^{2}$-projection of the continuous initial data as defined in \eqref{defPL2}).
We will study  the following deviations (from conservation):
\begin{align}
&\mbox{\bf Total energy}\qquad &&\frac{ \left| \mathcal{E}_{h}(t) - \mathcal{E}_h(0) \right| }{ \mathcal{E}_h(0) } \label{dev:ener}\\
&\mbox{\bf Total Mass}\qquad&&	\frac{ \int_{\O_x}\rho_h(t,s)ds - \int_{\O}f_h(x,v,0)dxdv }{  \int_{\O}f_h(x,v,0)dxdv}
	\label{dev:mass} \\
&\mbox{\bf $L^{1}$-norm}\qquad &&	\frac{ \Vert f_h(t) \Vert_{L^1} - \Vert f_h(0) \Vert_{L^1}  }{ \Vert f_h(0) \Vert_{L^1} } \label{dev:l1} \\
&\mbox{\bf $L^{2}$-norm}\qquad &&	\frac{ \Vert f_h(t) \Vert_{L^2} - \Vert f_h(0) \Vert_{L^2} }{ \Vert f_h(0) \Vert_{L^2} } \label{dev:l2} &&
\end{align} 
Most of the computations are carried out with the RK4 as described in section \ref{sec:4} and the energy preserving DG-LDG(v) method, with numerical fluxes \eqref{flux:v}-\eqref{a2} and \eqref{def:w} for the Vlasov discretization and \eqref{flux:DGv} for the LDG(v) discretization of Poisson. 
When using the other described methods (in space and time) we will explicitly say.

\subsection{Test 1: convergence for a forced Vlasov-Poisson system}\label{sec:conv-vp}
\begin{figure}[!htb]
		\centering
		\includegraphics[scale=.655]{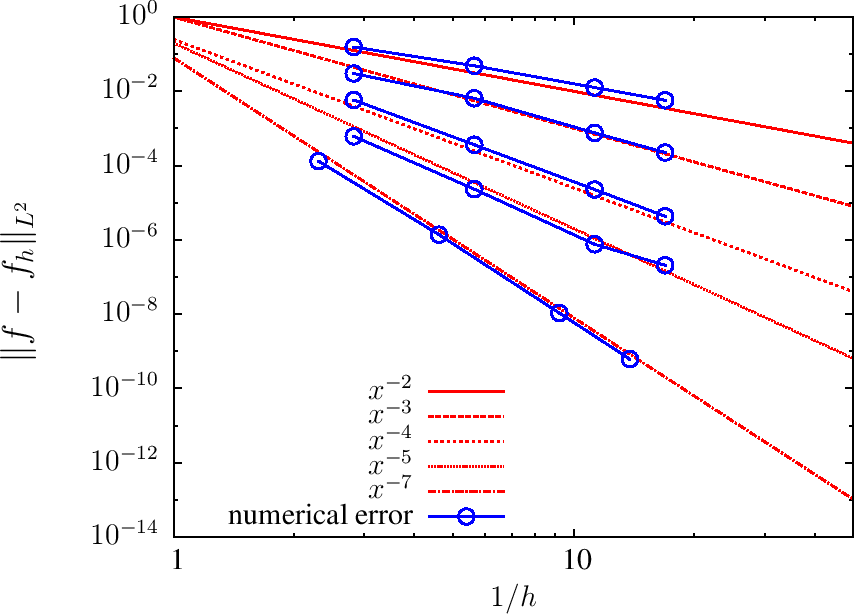} \hfill
		\includegraphics[scale=0.655]{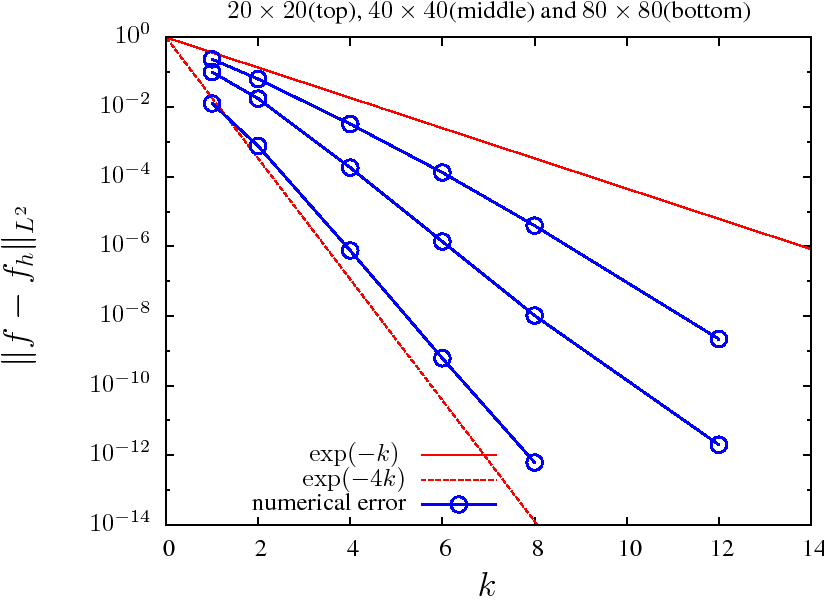} 	
		\caption{ {\bf Forced VP:} Convergence diagram  for the $L^{2}$-error in the approximation of the distribution function.
		$h$-convergence diagram (left) and $k$-convergence diagram (right).}
		\label{fig:sim2d-vp}
	\end{figure}
The aim of this test is to validate and assess the convergence properties of the proposed DG schemes. We set $\Omega_x=[-\pi,\pi]$ and $\Omega_v=[-4,4]$ and consider the following VP system in $\Omega=\Omega_x\times \Omega_v$:

\begin{equation}\label{test1:0}
	\left\{
	\begin{aligned}
		f_t + v f_x - E(x,t) f_v &= \psi(x,v,t) \qquad &&(x,v,t)\in \Omega \times \re^+, \\
		-\frac{\partial}{\partial x}E(x,t)& = \rho(x,t) - \sqrt{\pi}\qquad && x\in [-\pi,\pi],\\
	\end{aligned}
	\right.
\end{equation}
where the right hand side $\psi(x,v,t)$ is chosen so that the exact solution $(f,E)$ of \eqref{test1:0} (complemented with periodic bc in $x$ and compact support in $v$) is given by
\begin{align*}
		f(x,v,t) &= \left\lbrace 2 - \cos(2 x- 2\pi t) \right\rbrace e^{-\frac{1}{4}(4v-1)^2 } 
		\quad &&(x,v,t)\in \Omega \times \re^+ , \\
		E(x,t) &= \frac{\sqrt{\pi}}{4} \sin(2 x- 2\pi t) \quad && (x,t) \in \Omega_x \times \re^+\;.
	\end{align*}	
The forcing term $\psi(x,v,t)$ in \eqref{test1:0} is defined by
\begin{eqnarray*}
	\psi(x,v,t) = e^{-\frac{1}{4}(4v-1)^2 } 
				&&
				\left( 
				\left\lbrace (4 \sqrt{\pi}+2)v - (2 \pi + \sqrt{\pi}) \right\rbrace \sin( 2x- 2\pi t ) 
				\right.
				\\		
				&& +
				\left.				
				\sqrt{\pi} \left( 1/4  - v \right) \sin( 4x- 4\pi t ) \right).
\end{eqnarray*}
Observe that the solution $(f,E)$ is periodic in time with period $1$: 
	\begin{equation*}
		f(x,v,1) = f(x,v,0) \quad \forall\, (x,v)\in \Omega\;.
	\end{equation*}
	Therefore, we have performed the  the computations  up to time $\tf=1$. 
	\begin{figure}[!htb]
		\centering
		\includegraphics[scale=0.655]{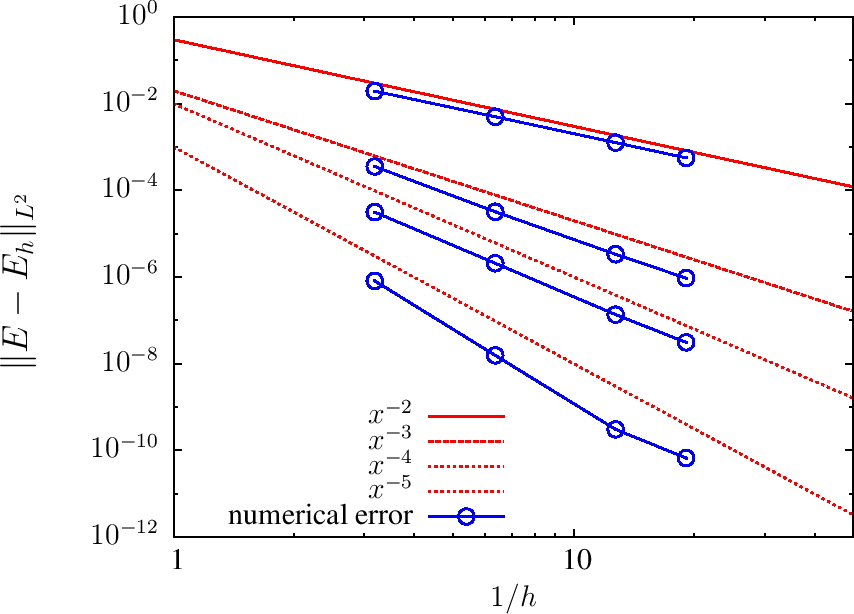} 
		\caption{{\bf Forced VP: }Convergence diagram  for the error in the approximation of the electrostatic field $E_h$.}
		\label{fig:sim2d-vpE}
	\end{figure}
In Figure \ref{fig:sim2d-vp} are given the convergence diagrams for $L^{2}$ error of the distribution function approximated by the energy preserving DG method \eqref{ldg0:a}-\eqref{ldg0:b}-\eqref{flux:DGv}. 
The $h$-convergence diagram is depicted on the left figure for several polynomial degrees $k=1,2,\ldots 6$. In agreement with the theory in \cite{acs0}, optimal order of convergence $k+1$ is achieved when using the method with approximation degree polynomial $k$. On the right figure we have also represented the $k$-convergence diagram (error versus polynomial degree) varying the polynomial degree $k=1,2,4,8,12$ for different meshes $20 \times 20$, $40 \times 40$ and $80 \times 80$. Although not proved theoretically, the results in the figure seem to indicate an exponential rate of convergence in this case.\\
 The corresponding convergence diagram for the error in the electrostatic field is given in Figure \ref{fig:sim2d-vpE}. Also in this case the theory in \cite{acs0} is verified. Note that as predicted in \cite{acs0} the same convergence rates are obtained for the approximations of $f_h$ and $E_h$, even if the approximation to $E_h$ is done using polynomials one degree higher. Observe though that the error in $E_h$ is much smaller. 
 \begin{table}
\small
	\centering
	\begin{tabular}{|l|c|c||c|c||c|c|} 
		\hline \hline
		$k=2$ &  \multicolumn{2}{|c|}{$RT_2$} &\multicolumn{2}{|c|}{LDG$(v)$} &\multicolumn{2}{|c|}{LDG  }\\
		\cline{1-7}
		h & $L^2$ error  & order & $L^2$ error & order & $L^2$ error & order \\
		\hline $1/20$ & $3.0154 \times 10^{-2}$ & - & $3.0134 \times 10^{-2}$ & - & $3.0134 \times 10^{-2}$ & - \\
		\hline $1/40$ & $6.4640 \times 10^{-3}$ & $2.221873$ & $6.4623 \times 10^{-3}$ & $2.2213132$ &
		$6.4623 \times 10^{-3}$ & $2.2213157$
		\\		
		\hline $1/80$ & $7.5804 \times 10^{-4}$ & $3.092085$ & $7.5775 \times 10^{-4}$ & $3.0922407$ &
		$7.5775 \times 10^{-4}$ & $3.0922409$
		\\			
		\hline \hline
	\end{tabular}	
	\vspace{0.5cm} 
	\begin{tabular}{|l|c|c||c|c||c|c|} 
		\hline \hline
		$k=3$ &  \multicolumn{2}{|c|}{$RT_3$} &\multicolumn{2}{|c|}{LDG$(v)$} &\multicolumn{2}{|c|}{LDG  }\\
		\cline{1-7}
	h & $L^2$ error & order & $L^2$ error  & order & $L^2$ error  & order \\
		\hline $1/20$ & $5.8300 \times 10^{-3}$ & - & $5.8295 \times 10^{-3}$ & - & $5.8295 \times 10^{-3}$ & - \\
		\hline $1/40$ & $3.6364 \times 10^{-4}$ & $4.0029199$ & $3.6361 \times 10^{-4}$ &
		 $4.0029180$ & $3.6361 \times 10^{-4}$ & $4.0029180$
		\\		
		\hline $1/80$ & $2.2582 \times 10^{-5}$ & $4.0092237$ &$2.2580 \times 10^{-5}$ &
		 $4.0092260$ &	$2.2580 \times 10^{-5}$ & $4.0092260$
		\\			
		\hline \hline
	\end{tabular}
	\caption{{\bf Forced VP:}  Convergence rates and errors $\|f(\tf)-f_h(\tf)\|_{0,\Th}$ for  $k=2$ (top) $k=3$ (bottom)  using different Poisson solvers (with $k+1$ polynomial spaces).}
	\label{tab:convergence}
\end{table} 
We now study the effect (in accuracy) of  using the different  Poisson solvers in the DG method for  the Vlasov Poisson system \eqref{test1:0}.Together with the LDG$(v)$ which gives the energy preserving scheme, we have also run the computations using the classical LDG (with numerical fluxes defined in \eqref{flux:DG0} and finite element spaces $(V_h^{k+1},V_h^{k+1})$ and the mixed finite element approximation \eqref{rt:a}-\eqref{rt:b} with the $RT_{k}$ (Raviart-Thomas) finite element spaces $(W^{k+1},V_h^{k})$. In Table \ref{tab:convergence} are given the $L^{2}$-errors  $\|f-f_h\|_{0,\Th}$ and convergence rates  for different mesh sizes and polynomial degrees $k=2$ and $k=3$. As it  can be observed, all the methods considered seem to yield approximations with the same accuracy and convergence properties. 
 	\begin{figure}[!htb]
		\centering
		\includegraphics[scale=0.6]{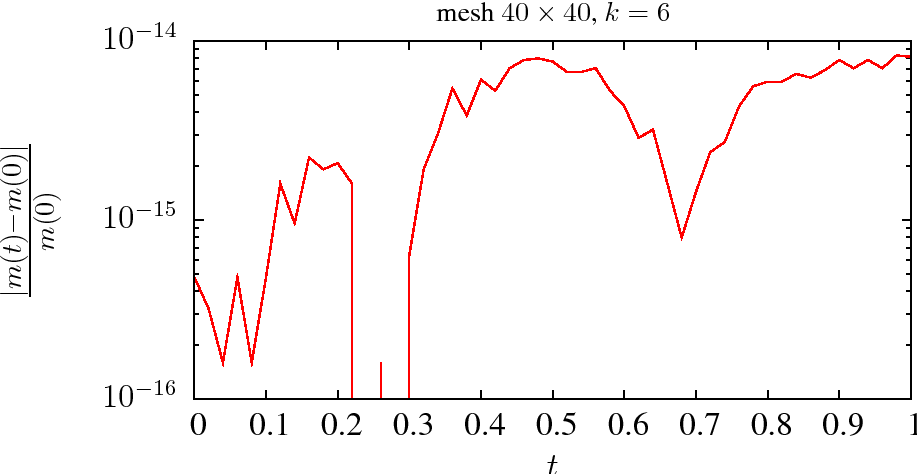} \hfill
				\includegraphics[scale=0.6]{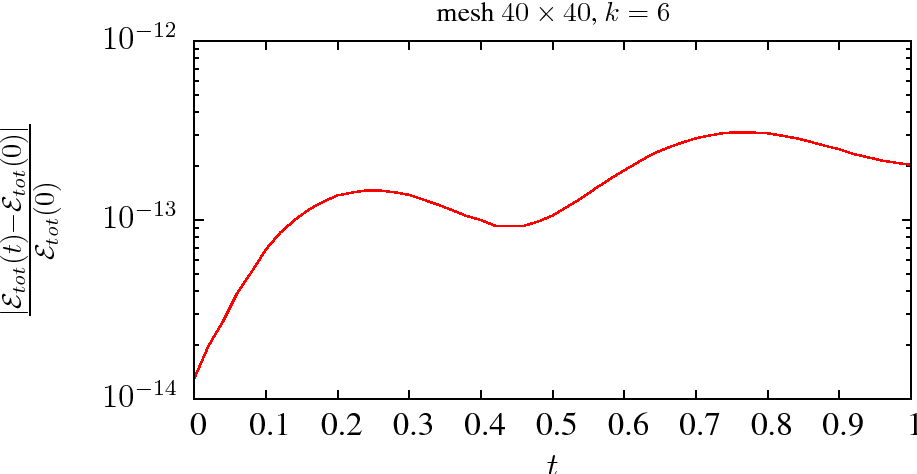} 			
		\caption{{\bf Forced VP: } Time evolution of the deviation from conservation of mass and total discrete energy}
		\label{fig:sim2d-vp1}
	\end{figure}
	We now study how the discrete mass and discrete energy are preserved in time. 
	In \autoref{fig:sim2d-vp1}, are depicted the time evolution of the relative error of the mass conservation (left), and energy conservation (right).  The corresponding diagram for \eqref{dev:l2} is given in Figure \ref{fig:sim2d-vp1b}. The results are obtained on a uniform mesh $40 \times 40$, using the DG energy preserving method with polynomial degree $k=6$. Notice that  the errors in the graphics are close to machine precision, which indicates the  ability of the method to {\it conserve}  the properties of the system.
	\begin{figure}[!htb]
		\centering
		\includegraphics[scale=0.6]{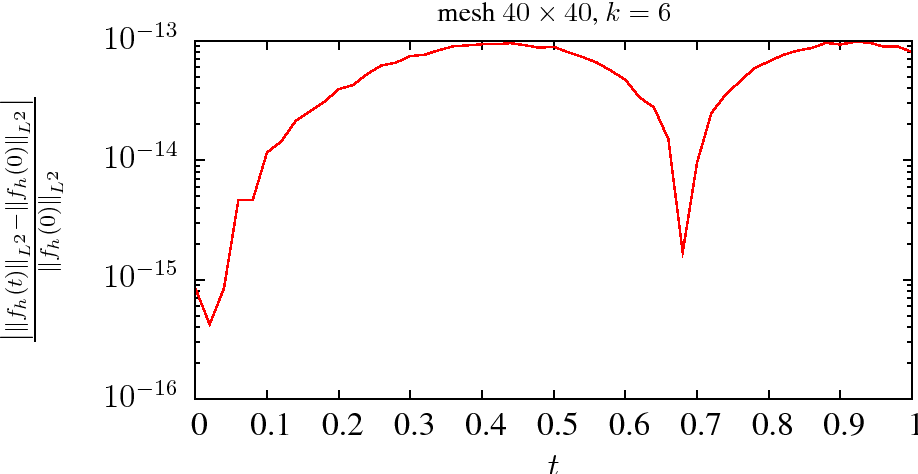} 	
		\caption{{\bf Forced VP:} Deviation from conservation of the $L^{2}$-norm of $f_h$.}
		\label{fig:sim2d-vp1b}
	\end{figure}	
\subsection{1D weak Landau damping}
\begin{figure}[!htb]
		\centering
	
		\includegraphics[scale=0.85]{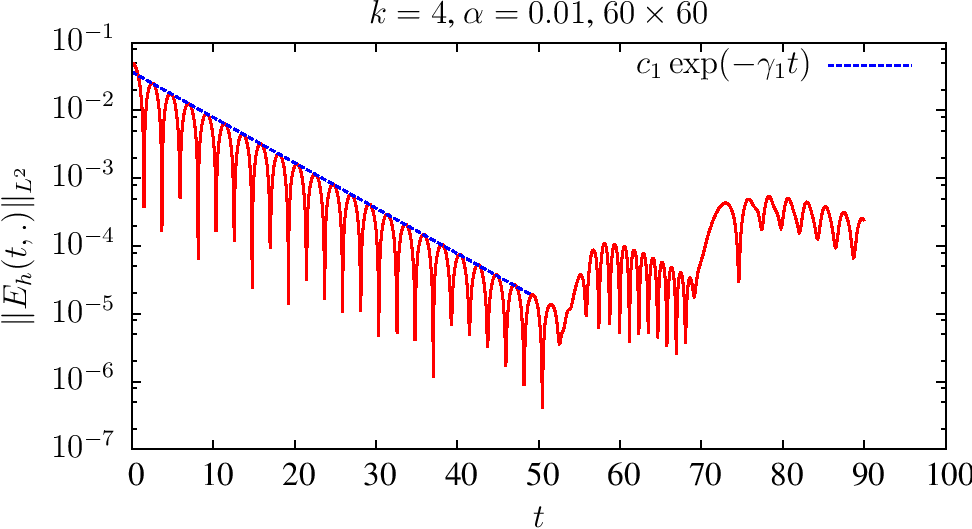} 	
		
		\caption{{\bf Weak Landau Damping:} Time evolution of the amplitude of the electrostatic field.}
		\label{fig:sim2d-vp-wnl1}
	\end{figure}
Typically, in most works in literature concerned with the simulation of  Landau damping (linear, weak or nonlinear) for the VP system, the  computational domains for the phase space $\Omega=\Omega_x\times \Omega_v$, are set to $\Omega_x=[0,4\pi]$ and $\Omega_v=[-5,5]$. However in our computations, we have found that using such $\Omega$, after some $t>0$, the approximate distribution function would not be of compact support in $v$. More precisely  we found that $\left. f_h(x,v,t) \right|_{v=\partial \Omega_v} \approx 10^{-5}$ for large time which is far from $0$, specially for high order accurate approximations as those considered in this paper. Therefore, to ensure the compact support in $v$ of $f_h$ we have set $\Omega_v=[-10,10]$ for our computations. 
In this case, $\left. f_h(x,v,t) \right|_{v=\partial \Omega_v} \approx 10^{-22}$ which can be  obviously  regarded as $0$.
 We take as initial data
\begin{equation}\label{landau:0}
	f(x,v,0) = \frac{1}{\sqrt{2 \pi}} \left( 1 + \alpha \cos(K \, x) \right) e^{- \frac{v^2}{2} } \qquad x\in \Omega_x \,\, v \in \Omega_v\;,
\end{equation}
where $\Omega_x=[0,4\pi]$ and $\Omega_v=[-10,10]$. In \eqref{landau:0},  $\alpha$ is the size of the perturbation and $K$ refers to the basic mode of the electrostatic field. Here, we have set $K=0.5$ and $\alpha=0.01$ so that the perturbation is small and therefore the linear theory can be used. We have computed the approximate solution $(E_h,f_h)$  over a mesh $60 \times 60$, using the energy preserving DG method with polynomials of degree $k=4$.
	\begin{figure}[!htb]
		\centering
		\includegraphics[scale=0.65]{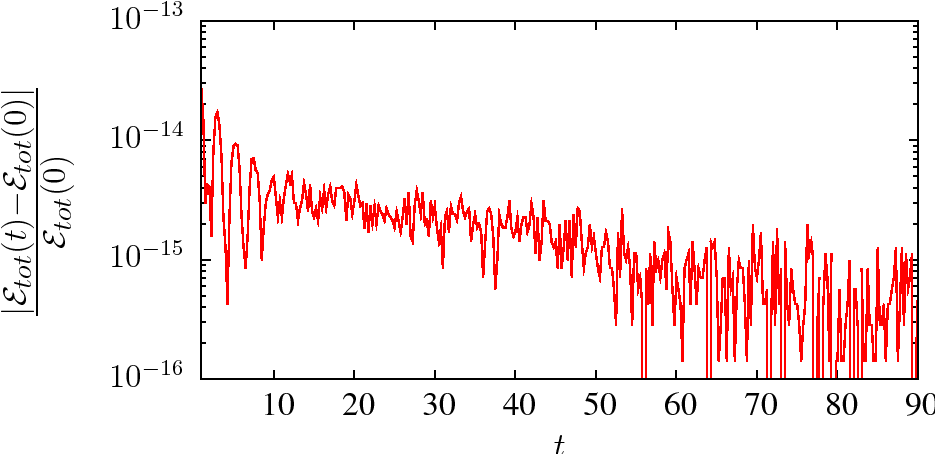}
		\hfill
		\includegraphics[scale=0.65]{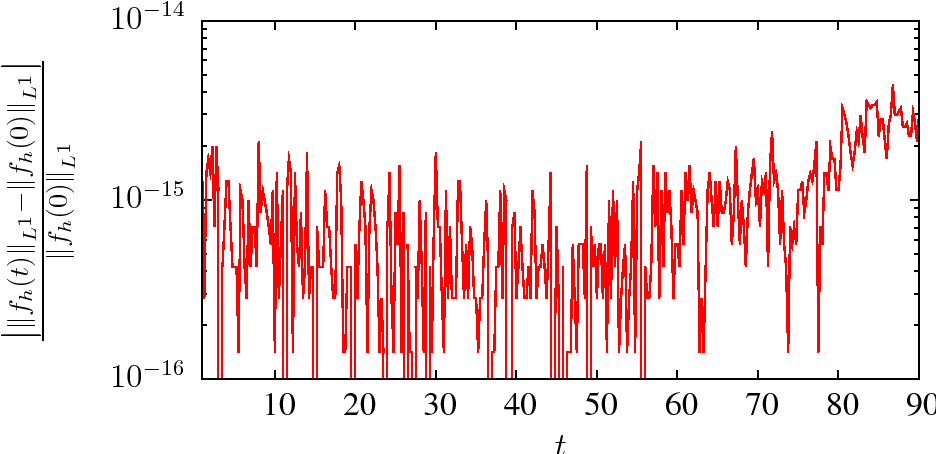}
		\caption{{\bf Weak Landau Damping:} Time evolution of the relative error (deviation) of the total energy  \eqref{dev:ener}(right) and  $L^1$ norm \eqref{dev:l1} (right) for the weak  Landau damping.}
		\label{fig:sim2d-vp-wnl1:b}
	\end{figure}
	
In Figure \ref{fig:sim2d-vp-wnl1} we plot (in a semi-log diagram) the time evolution of the $L^{2}$-norm of the electrostatic field $E_h(t)$. As it can be observed from the graphic, the amplitude of  the electrostatic field decreases exponentially in time up to some {\it recurrence time} $T_R$, after which it oscillates, in agreement with Landau linear theory. We have fitted the line (in log-scale) $c\, \exp(-\gamma t)$ at the local maximums $\|E_h(t)\|_{0}$.
The obtained damping rate $\gamma$ of the oscillations is $\gamma=-0.153272$, which is in good agreement with those results found in literature (compare to $-0.1533$ in \cite{seal}).\\
\begin{figure}[!htb]
		\centering
		\includegraphics[scale=0.65]{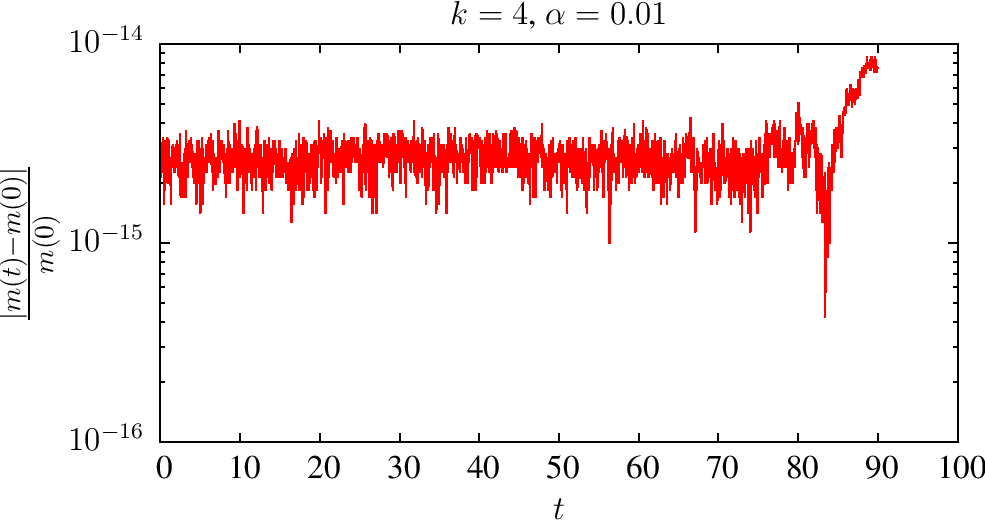}
		\caption{{\bf Weak Landau damping:} Mass conservation}
		\label{fig:sim2d-vp-wnl1:c}
\end{figure}
 
 \begin{figure}[!htb]
	\centering
	\includegraphics[scale=0.65]{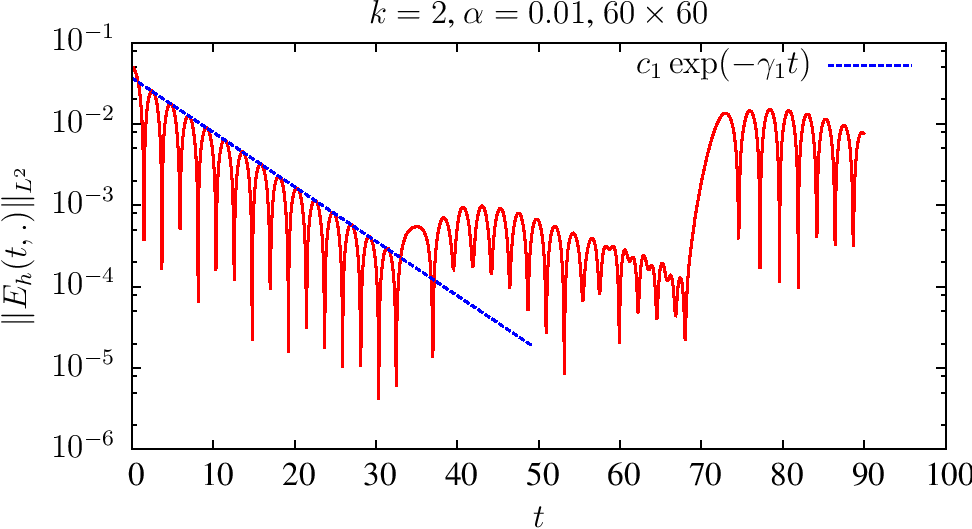}
	\hfill
	\includegraphics[scale=0.65]{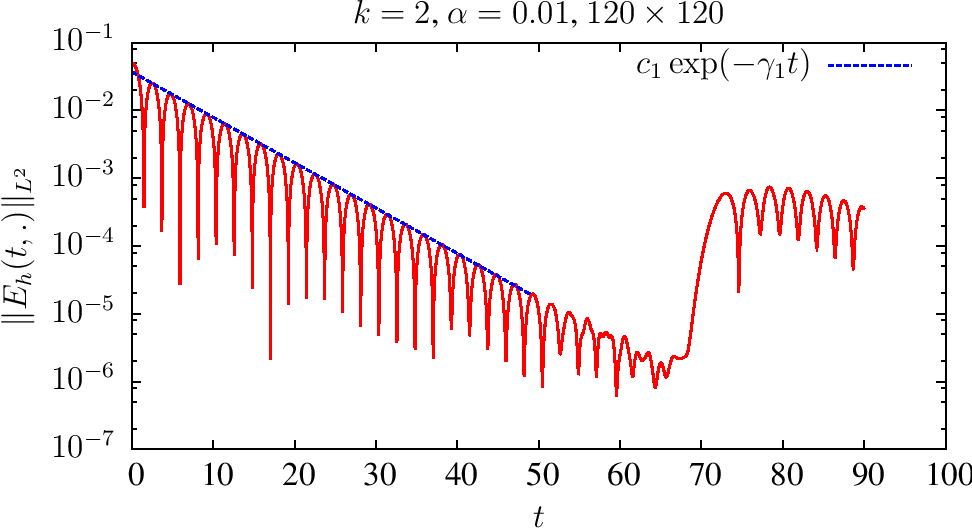}
	\\	
	\includegraphics[scale=0.65]{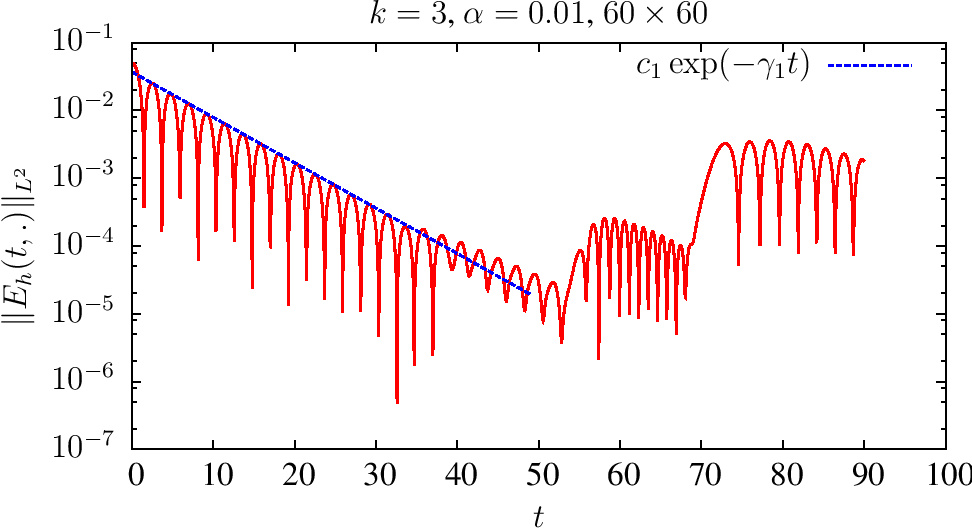}
	\hfill
	\includegraphics[scale=0.65]{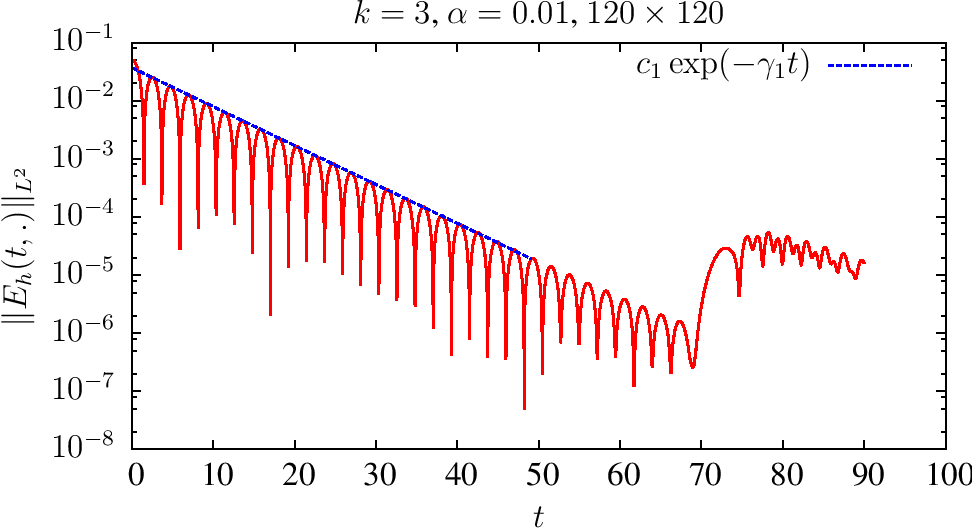}
	\\	
	\includegraphics[scale=0.65]{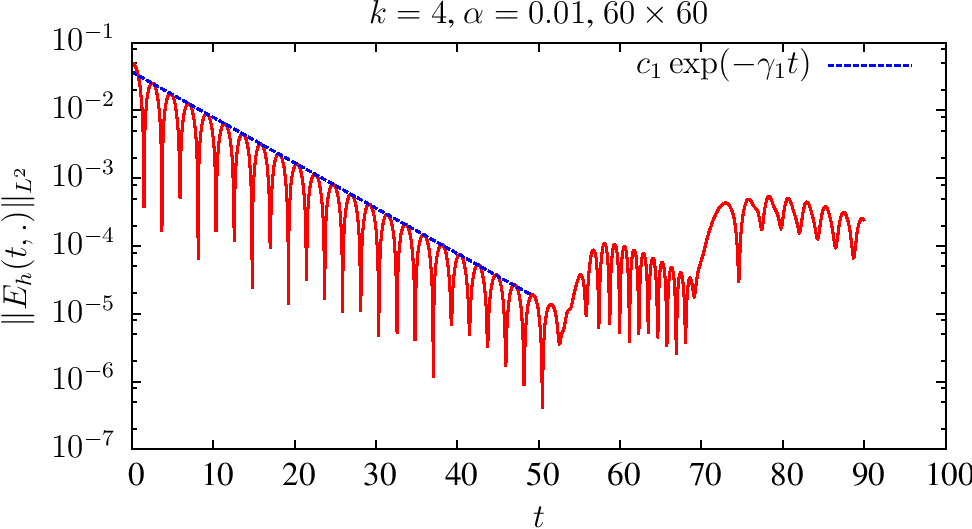}
	\hfill
	\includegraphics[scale=0.65]{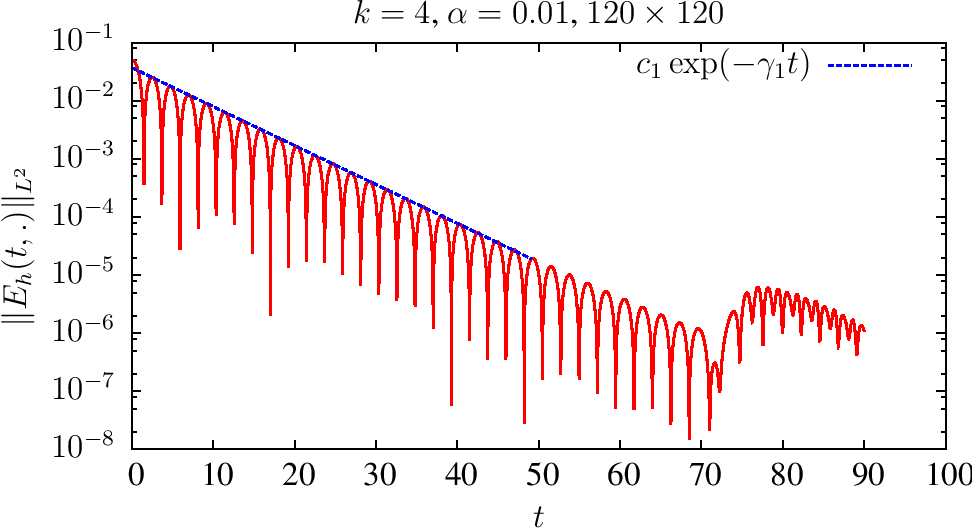}
	\caption{{\bf Weak Landau Damping}}
		\label{fig:sim2d-vp-wnl-order}
	\end{figure} 
	In Figure \ref{fig:sim2d-vp-wnl1:b} we have depicted the time evolution of the deviation from conservation of  the discrete total energy $\mathcal{E}_h(t)$ and the $L^{1}$-norm of the approximate distribution function $f_h$.  The corresponding diagram for the mass conservation is depicted in Fig. \ref{fig:sim2d-vp-wnl1:c}.
 Observe that the deviation from conservation for all these quantities is close to machine precision, which assess the good conservation properties of the scheme.
 
In \cite{filbet} the authors study the performance of eulerian solvers based on finite volumes and finite differences , and in particular they compare the estimated $T_R$  with the theoretical time predicted from the free streaming case given by the formula $T_R=\frac{2\pi}{k h_v}$. We have run some computations to see whether we could find a similar relation for the DG methods and so possibly  depending on the polynomial degree $k$. The results are given in  Fig. \ref{fig:sim2d-vp-wnl-order}. However, from these results it seems to us difficult to provide a closed formula or relation for the DG methods.
\subsection{Nonlinear (strong) Landau damping }\label{sec:nlp}
Nonlinear Landau damping is regularly used to assess the performance and properties of Vlasov-Poisson solvers (see \cite{Fijal99,filbet2, cr-son00,mcp0,BBBB09,seal,gamba0}. \\
We take as initial data $f(x,v,0)$ the function in \eqref{landau:0}, but we now set a larger amplitude of the initial perturbation of the density  $\alpha=0.5$ and take $K=0.5$. The computational domain is taken as for the weak case; $\Omega=\Omega_x\times \Omega_v$, with  $\Omega_x=[0,4\pi]$ and $\Omega_v=[-10,10]$. In this case, $\left. f_h(x,v,t) \right|_{v=\partial \Omega_v} \approx 10^{-22}$ for large $t$, and so the compact support of $f_h$ in $v$ is guaranteed.
 \begin{figure}[!htb]
		\centering
		\includegraphics[scale=0.85]{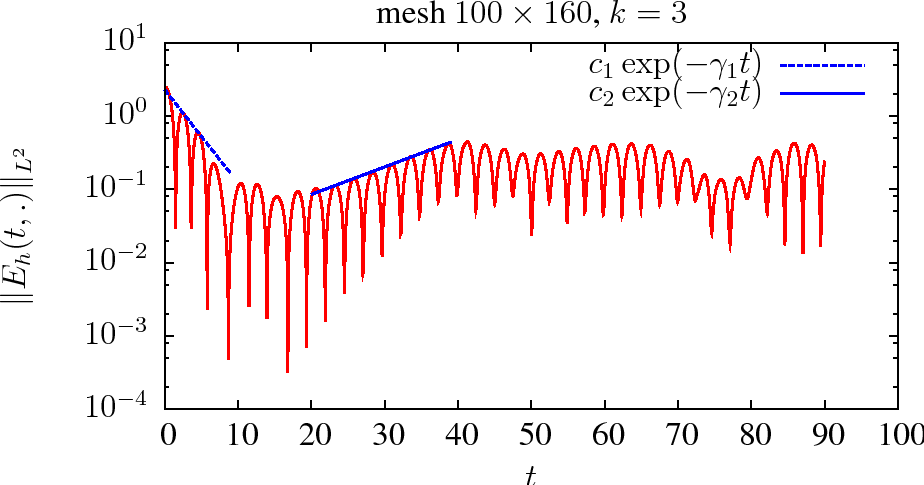} 
		\caption{{\bf Non-linear Landau Damping:} Time evolution of the amplitude of the electrostatic field. Estimated coefficients $c_1=2.279673$, $\gamma_1=-0.292285$, $c_2=0.015228$ and $\gamma_2=0.086126$.}
		\label{fig:sim2d-vp-nl0}
	\end{figure}
	
For this test, the Landau Linear theory cannot be applied since now the non-linear effects become important. However we will compare with other results obtained numerically in literature.
\begin{table}
	\centering
	\begin{tabular}{|l||c|c||c|c|} 
	\hline mesh & $\gamma_{\tt decay}$ & $c_{\tt decay}$ & $\gamma_{\tt growth}$ & $c_{\tt growth}$\\
	\hline $50 \times 80$ & -0.292286 & 2.279682 & 0.085114 & 0.015669 \\
	\hline $100 \times 160$ & -0.292285 & 2.279673 & 0.086126 & 0.015228 \\		
	\hline $150 \times 240$ & -0.292285 & 2.279673 & 0.086116 & 0.015232 \\
	\hline
	\end{tabular}
	\caption{{\bf Non-linear Landau Damping:} Estimated values for coefficients of the fitted functions $c \exp( -\gamma t)$; different mesh size.}
	\label{tab:coef}
\end{table} 

In Figure \autoref{fig:sim2d-vp-nl0} we plot the time evolution of the (log of the) $\|E_h(t)\|_{0}$, computed with the DG-DG(v) method over a mesh $100 \times 160$ using polynomials of degree $k=3$.  On the right diagram in the same figure, we have represented the corresponding time evolution of the deviation from conservation of the discrete total energy of the system \eqref{dev:ener}. Observe that the amplitude of the electrostatic field decreases  exponentially initially $t\in [0,10]$ and then  increases exponentially  (for $t \in [20,40]$) and after that it oscillates periodically. We have also depicted in the same diagram, the {\it lines } (in log-scale) $c \exp(- \gamma t)$ fitted at the local maximums  of $\Vert E(t,.) \Vert_{L^2}$ for $t \in [0,10]$ (initial decay) and for $t \in [20,40]$ (growth). The estimated coefficients and damping rates are $c_1=2.383814$, $\gamma_1=-0.305920$, $c_2=0.015360$ and $\gamma_2=0.085241$. They  are in good agreement with numerical simulations presented in the literature: the estimated $\gamma_{\tt decay}$ is same as the one reported in \cite{seal} $-0.292$;  and close to the one obtained in \cite{cheng}: $-0.281$. The growth rates are also in good agreement with those reported in literature (cf. \cite{seal}).
\begin{table}
	\centering
	mesh $100 \times 160$ \\
	\begin{tabular}{|l||c|c||c|c|} 
	\hline mesh & $\gamma_{\tt decay}$ & $c_{\tt decay}$ & $\gamma_{\tt growth}$ & $c_{\tt growth}$\\
	\hline $k=1$ & -0.280475 & 2.177524 & 0.087209 & 0.014650 \\
	\hline $k=2$ & -0.292584 & 2.244969 & 0.085946 & 0.015390 \\
	\hline $k=3$ & -0.292285 & 2.279673 & 0.086126 & 0.015228 \\		
	\hline
	\end{tabular}
	\caption{ {\bf  Non-linear Landau Damping:} Estimated values for coefficients of the fitted functions $c \exp( -\gamma t)$; different $k$.}
	\label{tab:coef-order}
\end{table} 	
We have also run this test using different meshes to study the effect of the mesh refinement  and of increasing the polynomial degree $k$ on the estimated values for the damping and increment rates. The estimated coefficients  of the fitted functions $c \exp(-\gamma t)$ are given in Table~\ref{tab:coef} (for the mesh refinement) and in Table~\ref{tab:coef-order} (for the increase in the polynomial degree).As expected,  by refining the mesh or by increasing the polynomial degree, the estimated coefficients have less error.\\

\begin{figure}[!htb]
		\centering
			\includegraphics[scale=0.65]{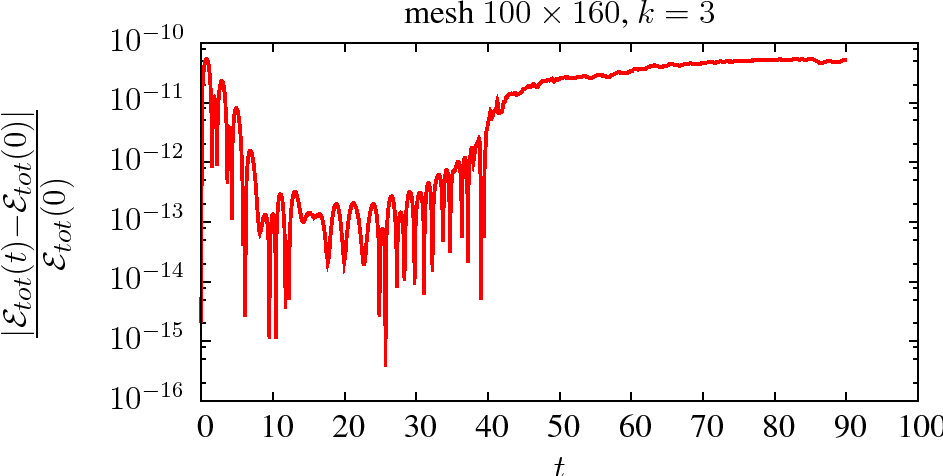} 	
			\includegraphics[scale=0.65]{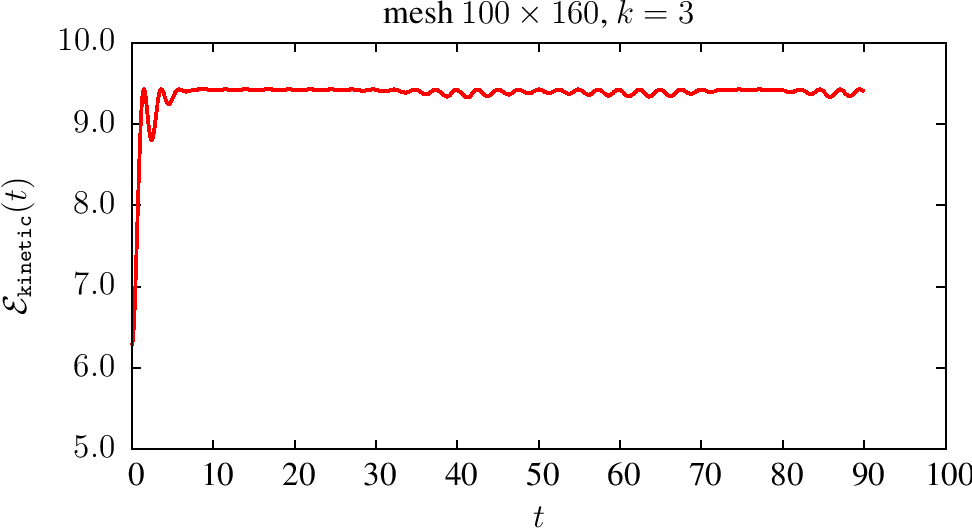} 		
		\caption{{\bf Non-linear Landau Damping: }Time evolution of the deviation from conservation of the total energy of the system.}
		\label{fig:sim2d-vp-nl0:b}
	\end{figure}
In Figure  \ref{fig:sim2d-vp-nl0:b}, we have represented the corresponding  time evolution of the deviation from conservation of the discrete total energy of the system \eqref{dev:ener}. 
 Note that up to $t \approx 10$ the deviation from conservation of total energy (quantity \eqref{dev:ener}) decreases to $10^{-12}$ and after that due to the process of filamentation there is a slowly  increment  until $t \approx 40$ when strong oscillations occur in $v$-direction. Therefore, the discrete total energy of the system is conserved with a relative error of order $10^{-10}$, which to our knowledge has not been obtained  before in literature. This property indicates that our scheme gives an accurate description of macroscopic values (physical quantities defined by the moments of the distribution function with respect to $v$).  \\
 \begin{figure}[!htb]
		\centering
		\includegraphics[scale=0.65]{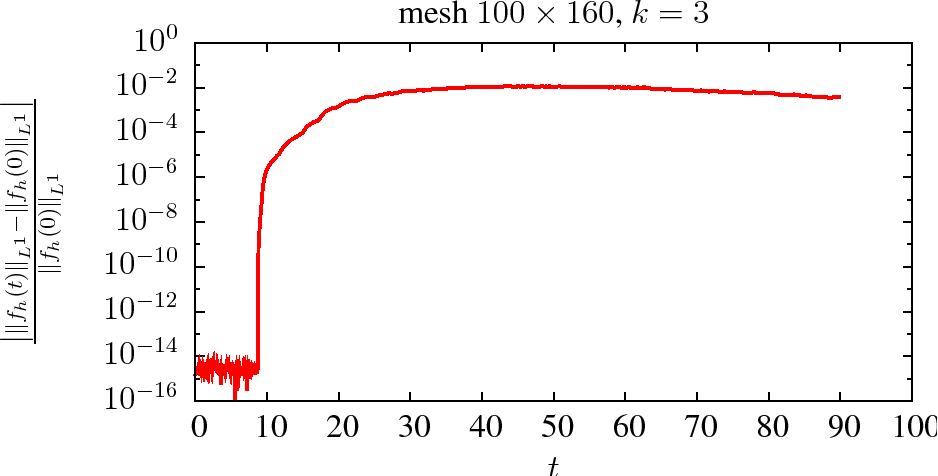} 
		\includegraphics[scale=0.65]{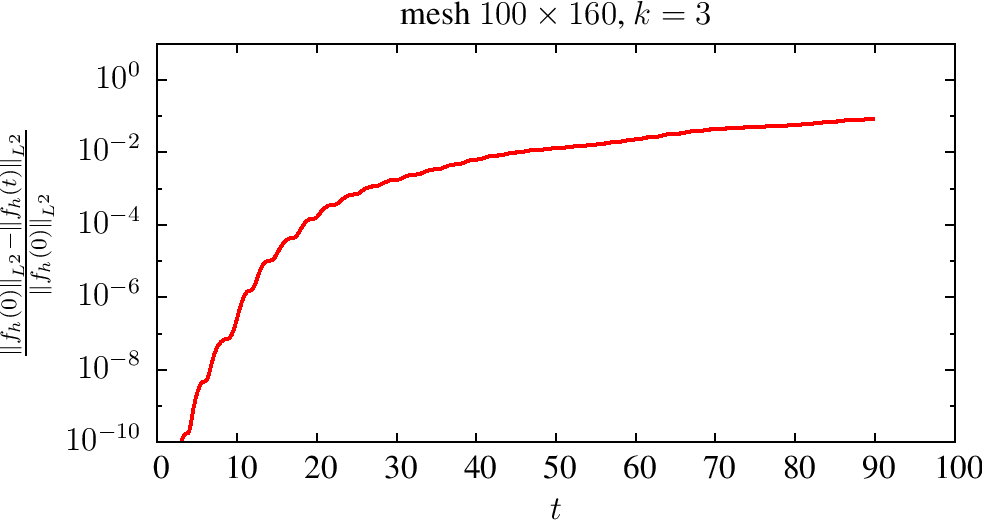} 
		\caption{ {\bf Non-linear Landau Damping: } The evolution of $\Vert f_h \Vert_{L^1}$ and $\Vert f_h \Vert_{L^2}$ in a semi-log scale where using a mesh $100 \times 160$.}
		\label{fig:sim2d-vp-nl1}
	\end{figure}
 The evolution in time of \eqref{dev:l1} and \eqref{dev:l2} is depicted in \autoref{fig:sim2d-vp-nl1}.\\
  \begin{figure}[!htb]
		\centering
		  \subfloat[$t=15$]{
		  \includegraphics[scale=0.32, angle=-90, clip=true, viewport=1.5in 1.5in 7.5in 10.5in]{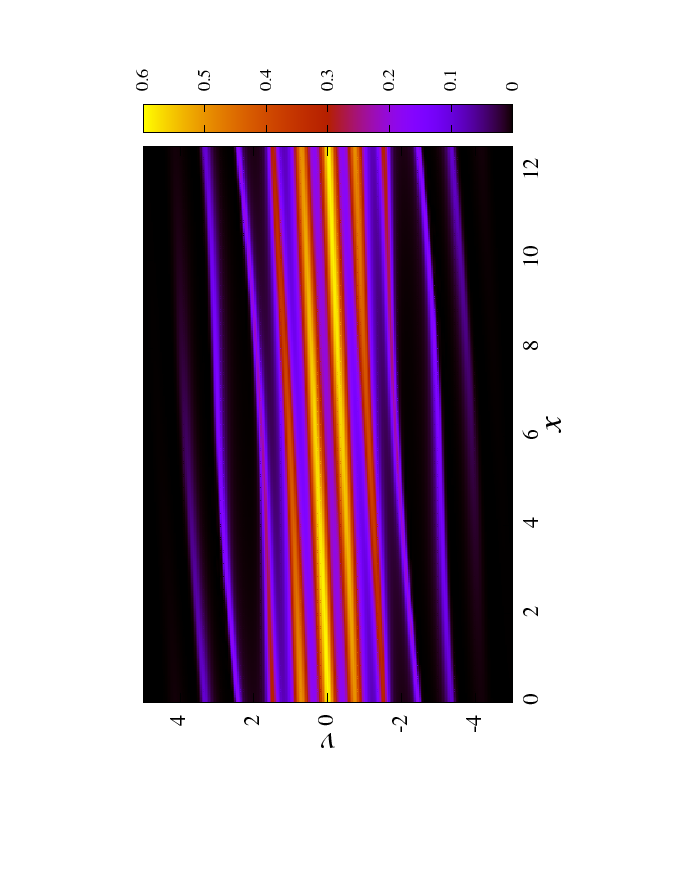}}                
		  \subfloat[$t=35$]{
		  \includegraphics[scale=0.32, angle=-90, clip=true, viewport=1.5in 1.5in 7.5in 10.5in]{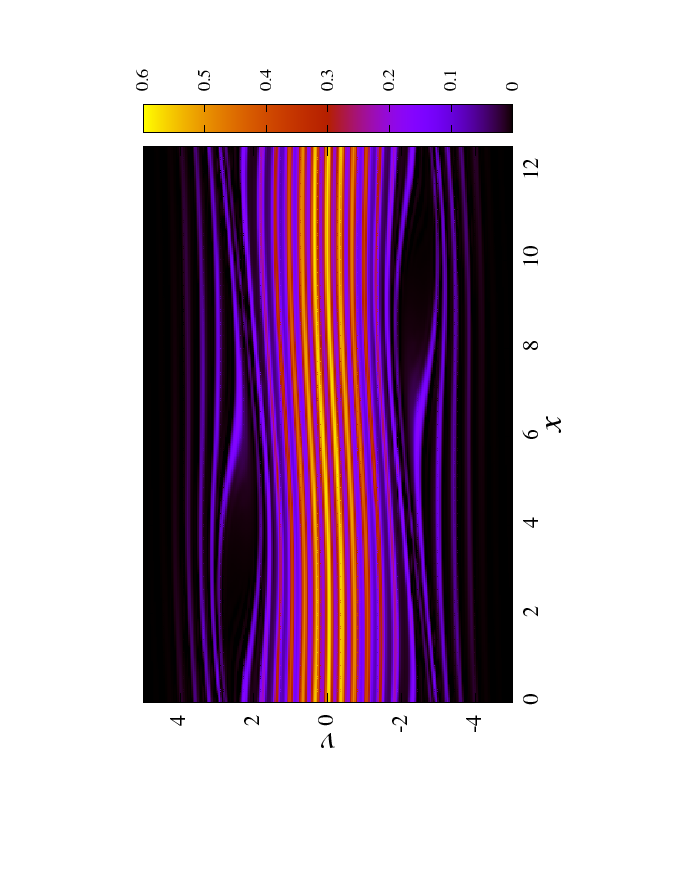}} \\                 
		  \subfloat[$t=45$]{
		  \includegraphics[scale=0.32, angle=-90, clip=true, viewport=1.5in 1.5in 7.5in 10.5in]{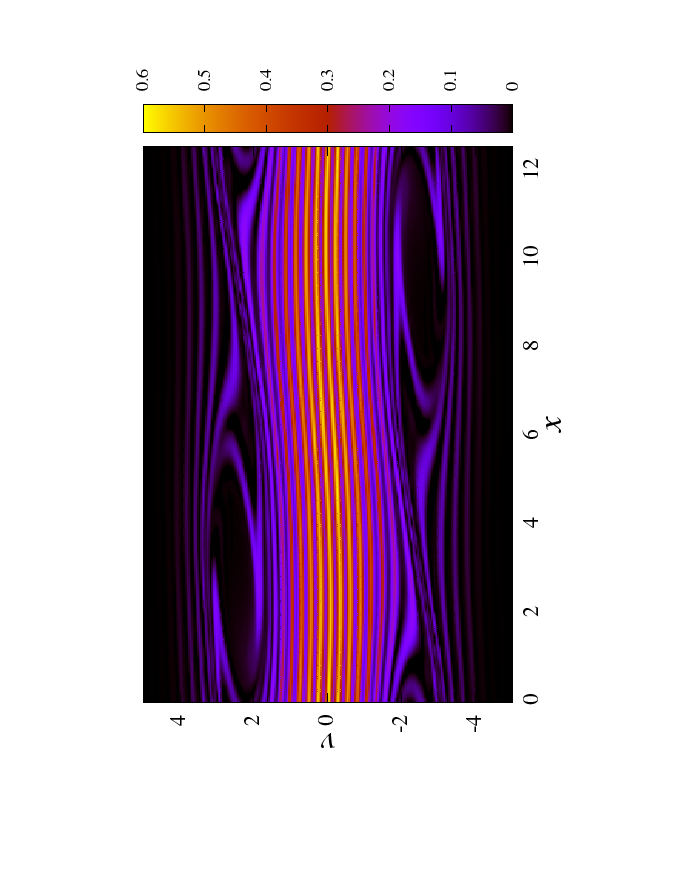}} 
		  \subfloat[$t=65$]{
		  \includegraphics[scale=0.32, angle=-90, clip=true, viewport=1.5in 1.5in 7.5in 10.5in]{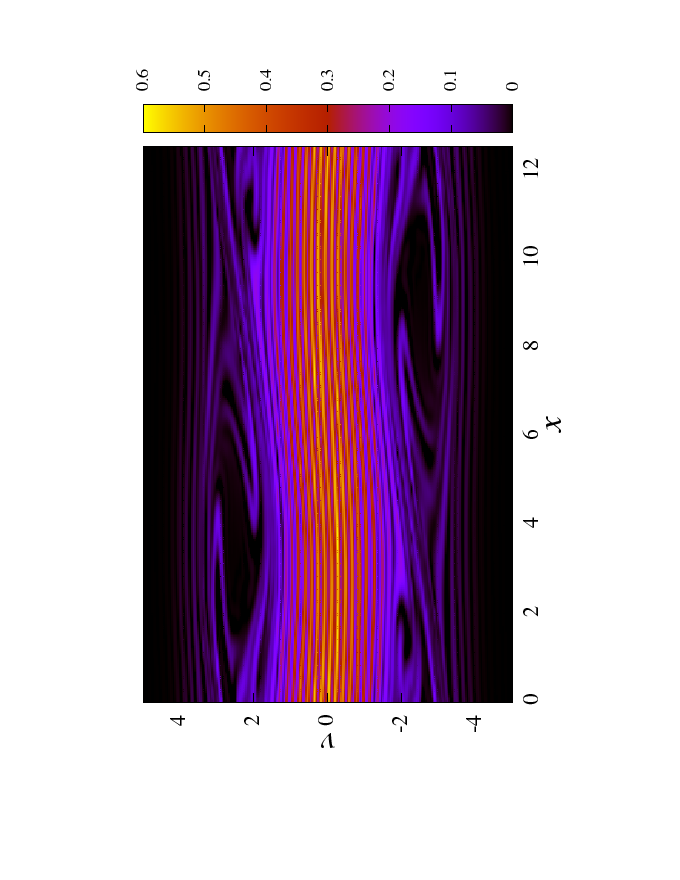}}                		
		\caption{ {\bf Non-linear Landau Damping:} approximate distribution function at different times, computed over a mesh $100 \times 160$ with the DG-LDG(v) method with $k=3$.}
		\label{fig:sim-vp-nl3}
	\end{figure}
In Figure \ref{fig:sim-vp-nl3} are represented in phase space the approximate distribution function $f_h$ at different times, obtained with the DG-DG(v) method over a mesh $100 \times 160$ using polynomials of degree $k=3$. From the figures it can be observed the detailed structure of the solution captured by the energy preserving DG method.
\begin{figure}[!htb]
		\centering
		  \subfloat[$t=15$]{
		  \includegraphics[scale=0.65, angle=0]{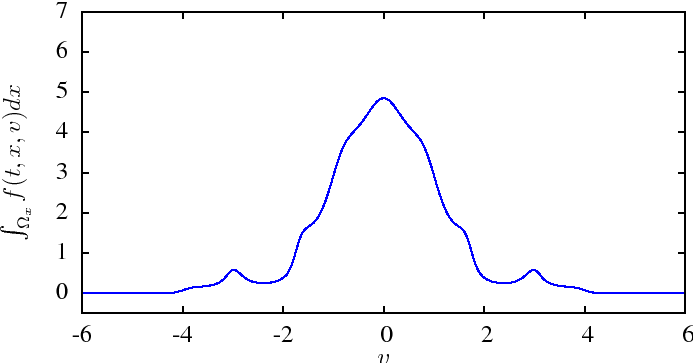}}                  
		  \subfloat[$t=45$]{
		  \includegraphics[scale=0.65, angle=0]{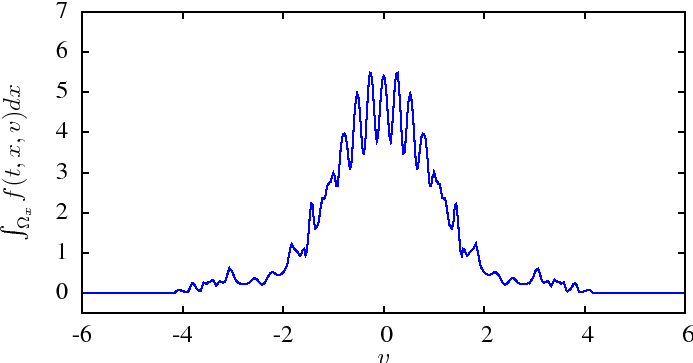}}                  \\
		  \subfloat[$t=65$]{
		  \includegraphics[scale=0.65, angle=0]{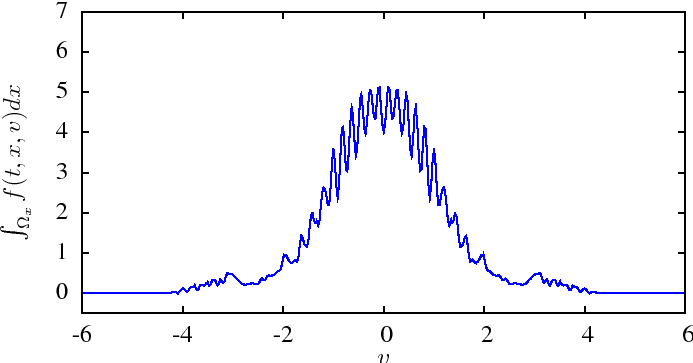}}                  
		  \subfloat[$t=90$]{
		  \includegraphics[scale=0.65, angle=0]{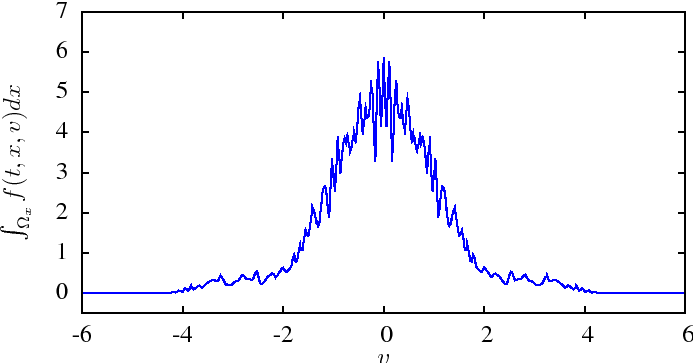}}                  		 	  
		\caption{{\bf Non-linear Landau Damping: } Profiles in $v$ of the approximate distribution function $f_h(t)$ at different times.}
		\label{fig:sim-vp-nl4}
	\end{figure}		
To asses the ability of the scheme to capture the possible strong oscillations in $v$-direction, we have also plotted, in Figure \ref{fig:sim-vp-nl4},  the corresponding profiles in $v$ of each $f_h(t)$, defined by:
\begin{equation*}
	\int_{\Omega_x} f_h(x,v,t) dx\;.
\end{equation*}
 We have also performed the simulations with the RK2-TVD time integrator, to study how the use of such integrator affects the accuracy and conservation properties of the DG-DG(v) method.  In Figure \ref{fig:sim2d-rk2tvd}, we have depicted the time evolution of the amplitude of the electrostatic field (left diagram) and the time evolution of \eqref{dev:ener} (right diagram), when using  $k=3$ over a mesh $100 \times 160$.  These graphics should be compared with Figures \ref{fig:sim2d-vp-nl0} and \ref{fig:sim2d-vp-nl0:b} (left), respectively. 
 \begin{figure}[!htb]
		\centering
		\includegraphics[scale=0.6]{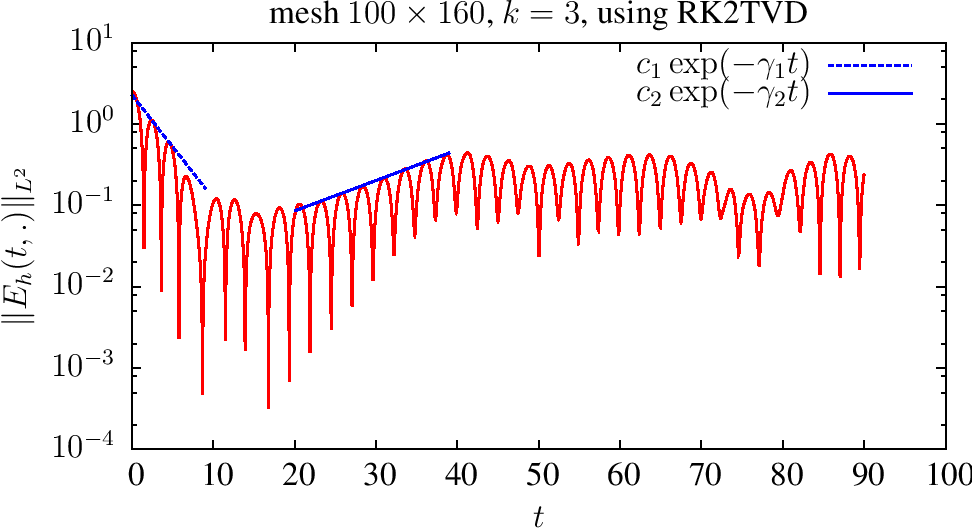} 	
				\hfill
			\includegraphics[scale=0.6]{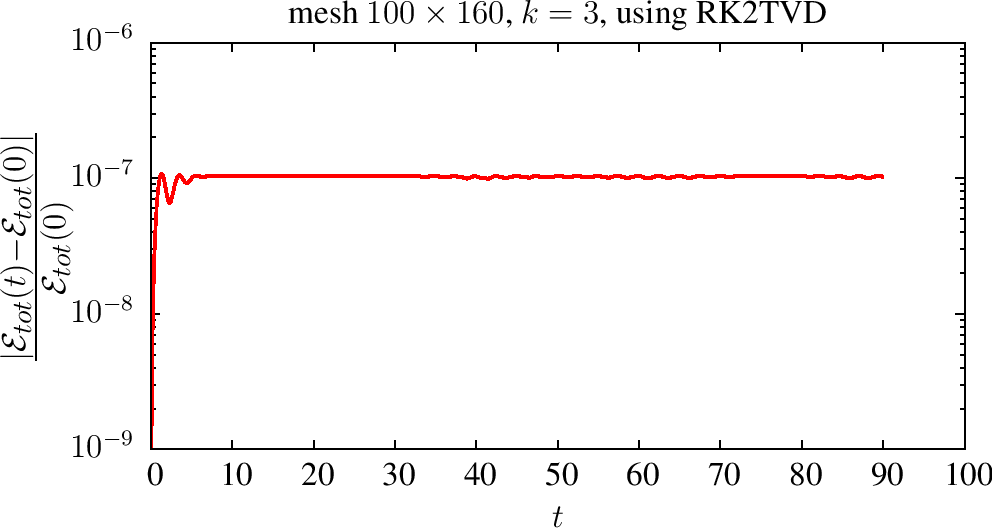} 	
		\caption{{\bf Non-linear Landau Damping:} results with RK2-TVD time integrator}
		\label{fig:sim2d-rk2tvd}
	\end{figure}	
As regards the energy conservation, notice that the errors are higher than those obtained with the RK4, although this can be an effect of the lower order of  the time integration used. In any case, the use of TVD integration does not seem to show any special advantage for this problem.

We now study and compare the  conservation properties of the DG method when using the two approaches  \eqref{a1} and \eqref{a2} for defining the numerical flux $\widehat{E_hf_h}$. In Figure \ref{fig:sim2d-vp-nl-p0} are depicted the time evolution of the error in the total energy, obtained with both approaches for  $k=3$ (upper diagrams). The result obtained with the flux defined as in \eqref{a1} are represented on the left diagram. The one obtained with the weighted average modification given in \eqref{a2} are given on the right diagrams. On the bottom diagram of the same figure is given the result corresponding to the use of the non-consistent definition \eqref{aa00}. As can be appreciated from the results, the use of either \eqref{a1} or \eqref{a2}  leads to the conservation of the total energy, while the definition \eqref{aa00} does not. These results  confirm Theorem \ref{teo:ene}.
 Although not reported here, the same set of experiments was run with $k=1,2$.  For the lowest order $k=1$, similar results were obtained for all the approaches, with no significant differences among them.
	\begin{figure}[!htb]
		\centering
		\includegraphics[scale=0.65]{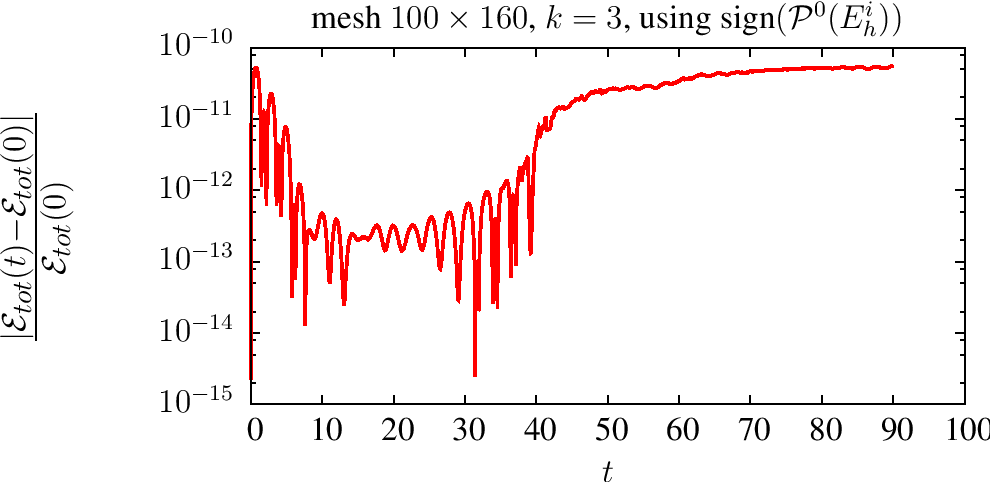}	
		\hfill
		\includegraphics[scale=0.65]{figs/2d-vp-nl/cenerg.png} 			
		\\ \vspace{0.5cm}
		\includegraphics[scale=0.65]{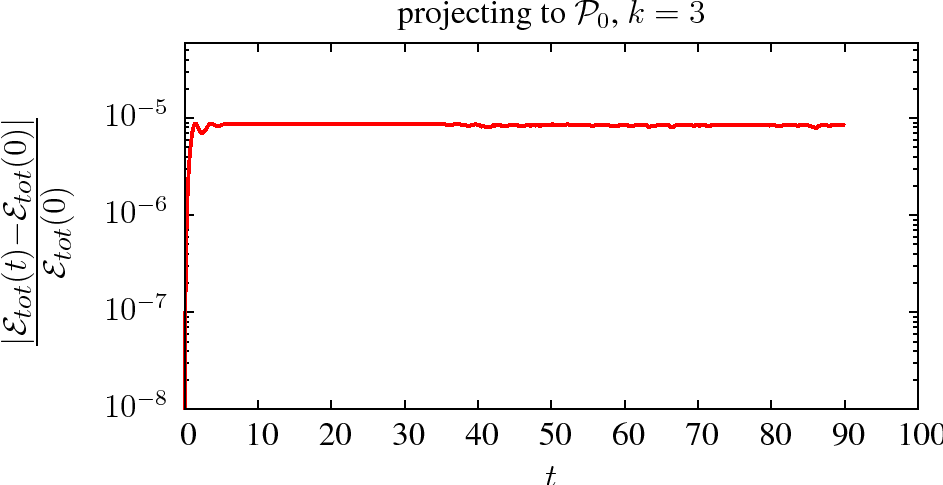}
		\caption{{\bf Non-linear Landau damping:} time evolution of the deviation \eqref{dev:ener} of the discrete total energy computed over a mesh $100 \times 160$ with $k=3$ with the different implementations of the numerical flux $\widehat{E_h^{i}f_h}$. Definition \eqref{a1} (left  upper diagram); definition \eqref{a2} (right upper diagram), non-consistent definition \eqref{aa00} (bottom diagram)}
		\label{fig:sim2d-vp-nl-p0}
	\end{figure}
\subsection{Two stream instability}\label{sec:twi}
This is a standard benchmark for checking the reliability of the schemes to face the strong oscillations. We have set $\Omega_x = [0,4 \pi]$ and $\Omega_v = [-10,10]$ (as discussed in \ref{sec:nlp}, to ensure the approximation is  compactly supported in $\Omega_v$ at all times of the computation) and take as initial data for the VP system \eqref{mod01}-\eqref{rho},
\begin{equation}
	f(x,v,0) = \frac{v^2}{\sqrt{8 \pi}} \left\lbrace 2 - \cos(K (x-2\pi)) \right\rbrace e^{ {- \frac{v^2}{2}} } \quad (x,v)\in \Omega\;,
\end{equation}
with  $K=0.5$. In Figure \ref{fig:sim-vp-2s0} are represented the approximate solutions (in phase space) obtained  at different times $t=15,30,45$ and $60$ with the energy preserving DG method using polynomial degree $k=3$ over a mesh $150 \times 150$.  The evolution in time of the deviation from conservation of the discrete total energy \eqref{dev:ener} and  the $L^{2}$-norm of the solution \eqref{dev:l1} are depicted respectively, on the left and right diagrams in Figure  \ref{fig:sim-vp-2s1}. Notice that also in this case, the total energy of the system is preserved up to machine precision. 

		\begin{figure}[!htb]
		\centering
		  \subfloat[$t=15$]{
		  \includegraphics[scale=0.32, angle=-90, clip=true, viewport=1.5in 1.5in 7.5in 10.5in]{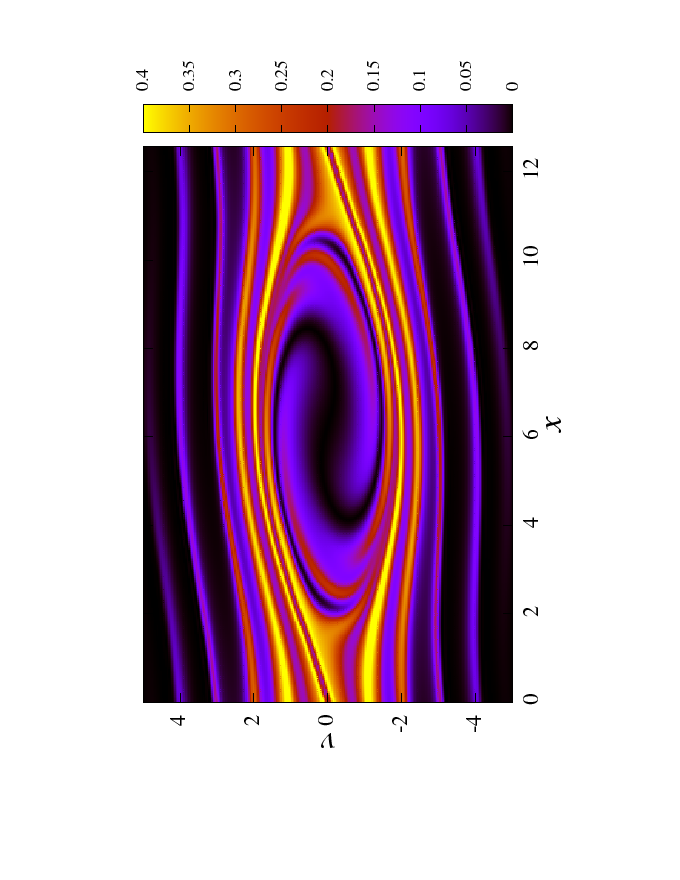}}                
		  \subfloat[$t=30$]{
		  \includegraphics[scale=0.32, angle=-90, clip=true, viewport=1.5in 1.5in 7.5in 10.5in]{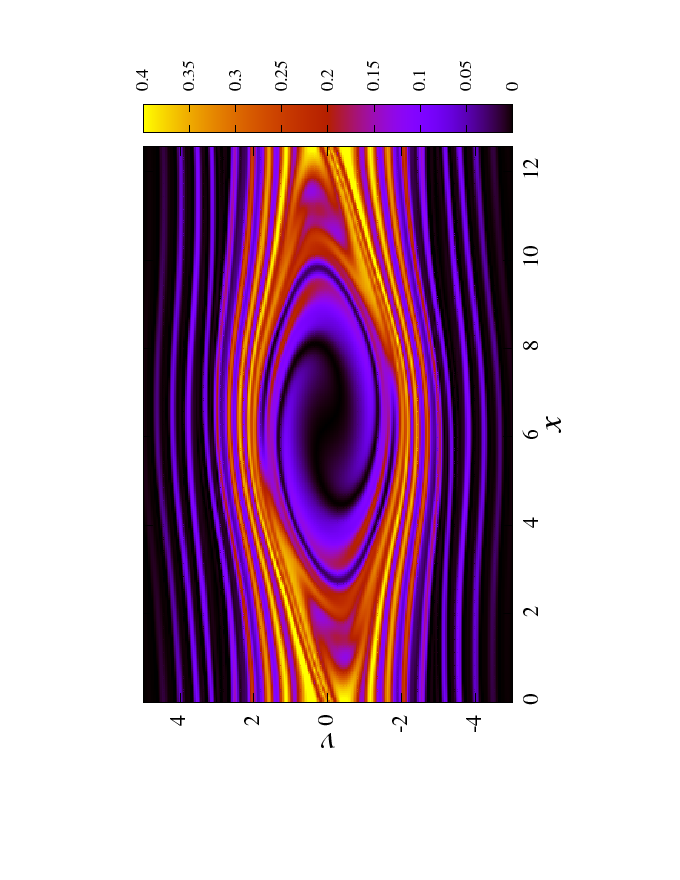}} \\ 	  
		  \subfloat[$t=45$]{
		  \includegraphics[scale=0.32, angle=-90, clip=true, viewport=1.5in 1.5in 7.5in 10.5in]{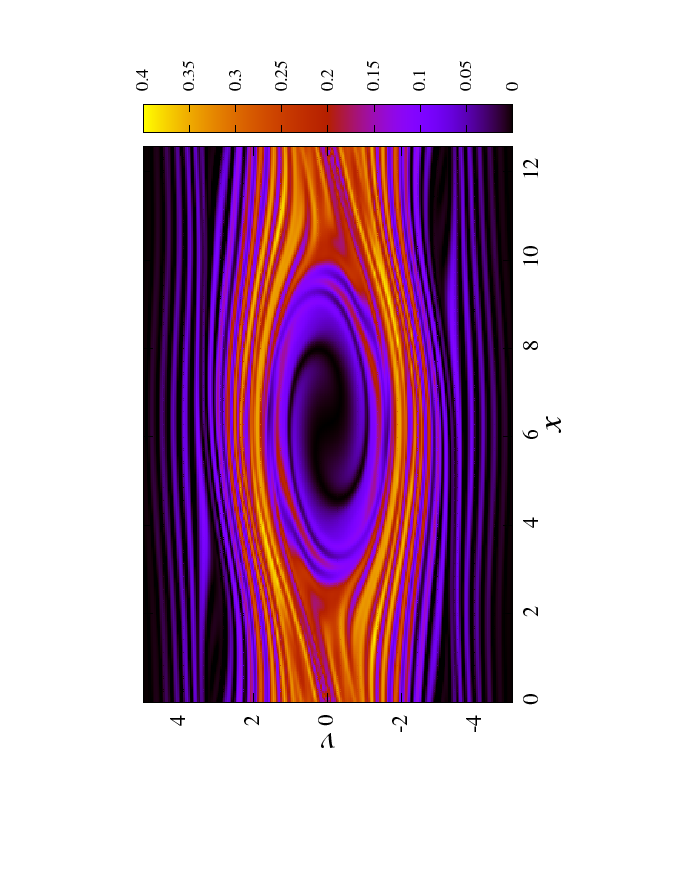}}                
		  \subfloat[$t=60$]{
		  \includegraphics[scale=0.32, angle=-90, clip=true, viewport=1.5in 1.5in 7.5in 10.5in]{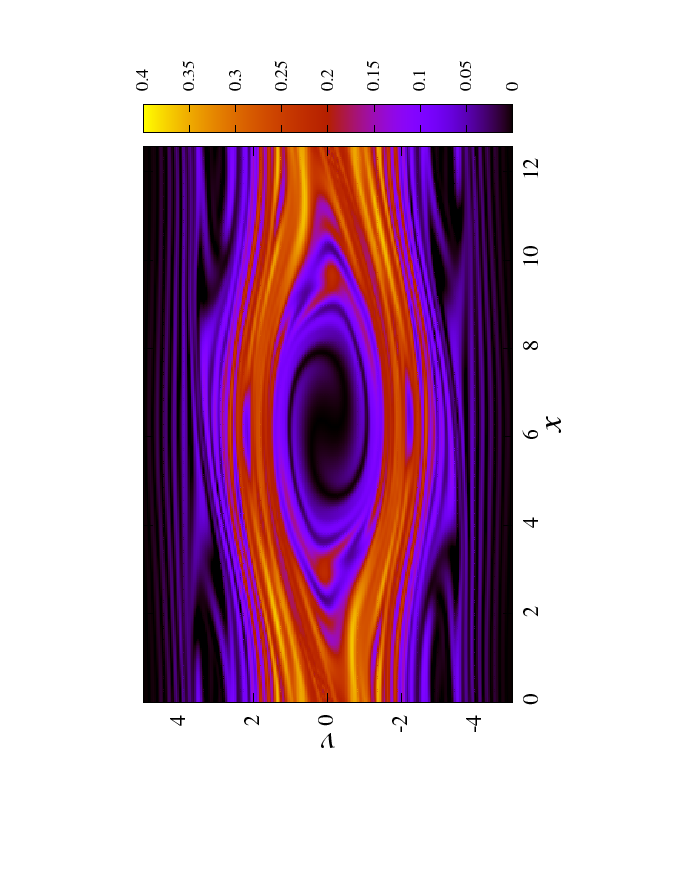}} 	  
		\caption{{\bf Two stream instability:} solution of the VP system for two stream instability at different times using mesh $100 \times 160$ and $k=3$.}
		\label{fig:sim-vp-2s0}
	\end{figure}
			\begin{figure}[!htb]
		\centering
		  \includegraphics[scale=0.65, angle=0]{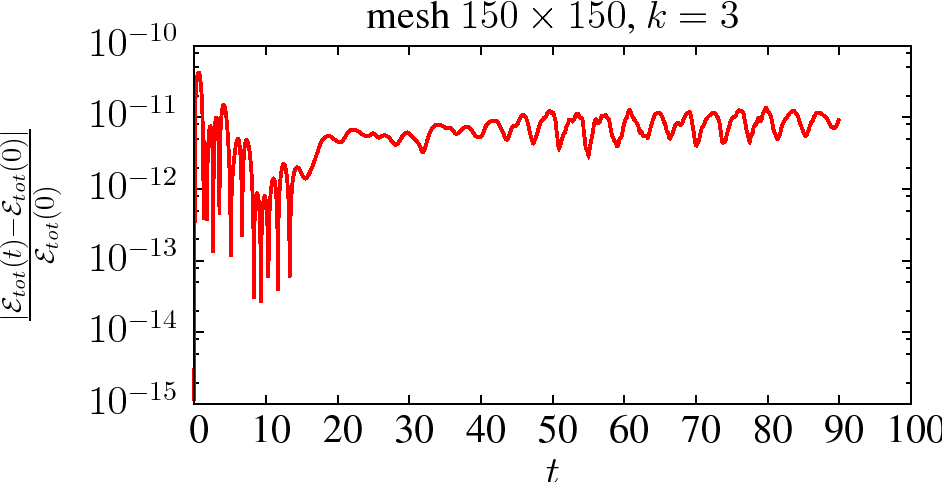}                  
		  \includegraphics[scale=0.65, angle=0]{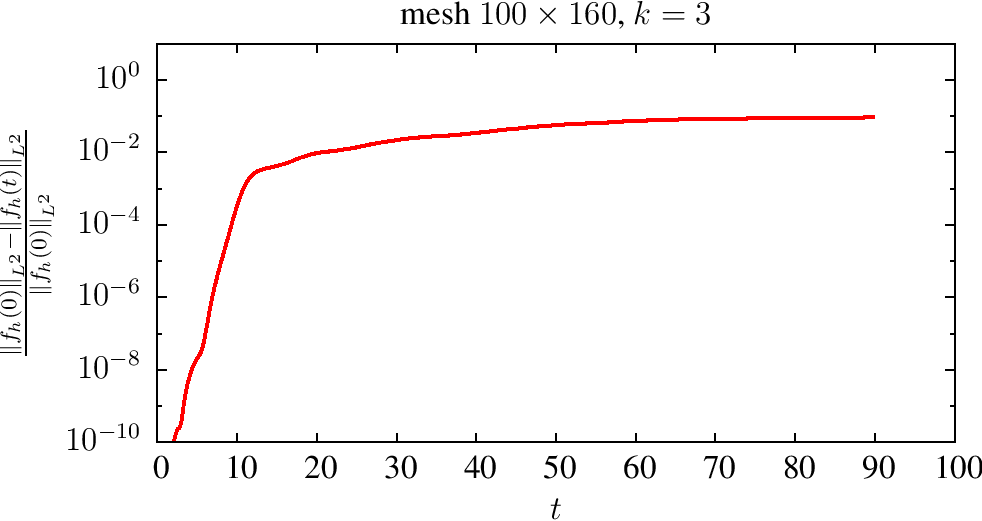}		  
		\caption{{\bf Two stream instability:} evolution of the relative error in $L^2$ and total energy in a semi-log scale}
		\label{fig:sim-vp-2s1}
	\end{figure}

We now compare the effect of using different Poisson solvers in the conservation properties of the final DG scheme for  the Vlasov Poisson system. In Figure \ref{fig:sim-vp-2s-comp-pois} are depicted the time evolution of the deviation from conservation of total energy  \eqref{dev:ener} obtained when using the mixed finite element approximation \eqref{rt:a}-\eqref{rt:b} with the $RT_{k}$ (Raviart-Thomas) finite element spaces $(W^{k+1},V_h^{k})$ ($-\,-\,-$);  the classical LDG (with numerical fluxes defined in \eqref{flux:DG0} and finite element spaces $(V_h^{k+1},V_h^{k+1})$ ($-+-+$) and the LDG$(v)$ method with fluxes \eqref{flux:v} ($-\circ--\circ--$).  On the left graphic are given the results obtained over a fixed mesh $100\times 160$ with $k=1$; on the right graphic those obtained with $k=3$ and over a mesh $40\times 40$. From these graphics it can be appreciated that by using $k=1$  the total energy is not conserved  and the error committed is of  $5$ orders of magnitude higher than for $k=3$ even for a fine mesh. Notice also that for $k=3$  even when using a very coarse mesh ($40\times 40$) the total discrete energy is conserved with error of order $O(10^{-7})$.

	\begin{figure}[!htb]
		\centering
		  \includegraphics[scale=0.65, angle=0]{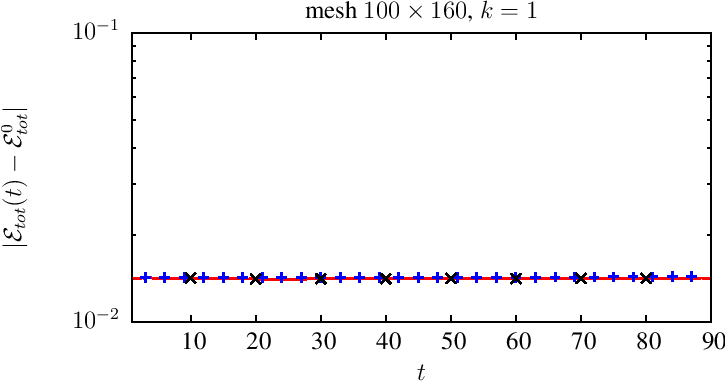}     
		  \hfill             
		  \includegraphics[scale=0.65, angle=0]{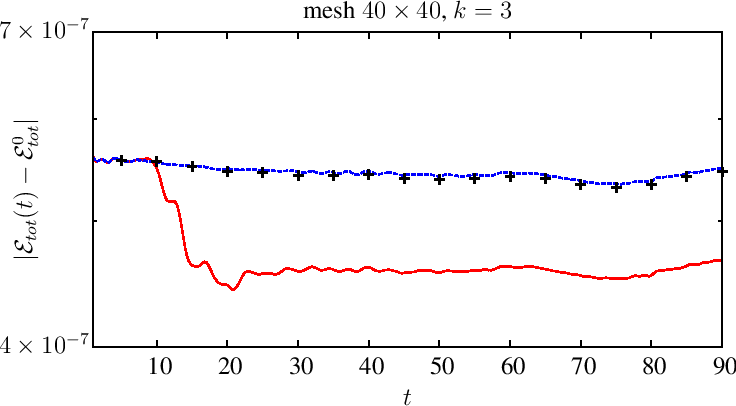}     
		  \hfill              		  
		\caption{{\bf Two stream instability:} time evolution of the deviation \eqref{dev:ener} for different Poisson solvers; the red line is obtained using LDG$(v)$, the cross symbols are $RT_k$ and plus symbols are classical LDG.  $k=1$ (left) and $k=3$ (right).}
		\label{fig:sim-vp-2s-comp-pois}
	\end{figure}

We now study the effect of mesh refinement for the conservation properties of the scheme.
Here, we check the effect of refininging $x$ and $v$ separately. In Figure  \ref{fig:sim-vp-2s2}  are depicted the time evolution of \eqref{dev:ener} (top graphics), \eqref{dev:l1} (center) and \eqref{dev:l2} (bottom graphics) for $k=3$. The graphics on the left show the effect of refinement of the $x$   variable. On the right are given those obtained by refining only the $v$ variable.
From the  graphics it can be observed that the $L^1$ and $L^2$ norms of $f_h$ 
depend more on refinement of the $v$ variable but they seem to be insensitive to the refinement on the $x$ direction.For the total discrete energy, however, the opposite effect can be appreciated from the top graphics. It seems to depend more on the refinement in the $x$ variable (which is expected since the potential energy comes from the Poisson coupling). 
	\begin{figure}[!htb]
		\centering
		  \includegraphics[scale=0.65, angle=0]
		  {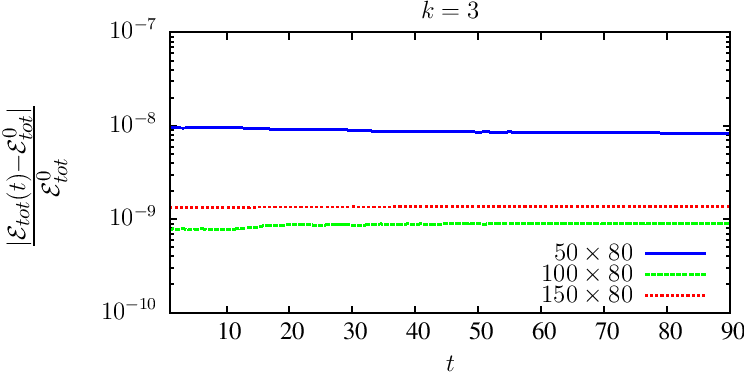}  
		   \hfill 
		  \includegraphics[scale=0.65, angle=0]
		  {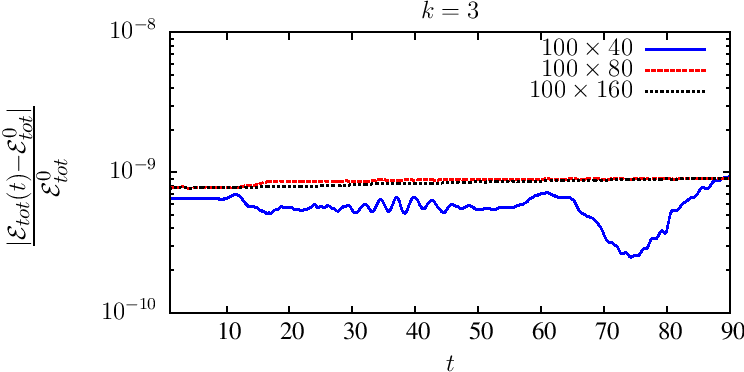}
		  \\
		  \includegraphics[scale=0.65, angle=0]{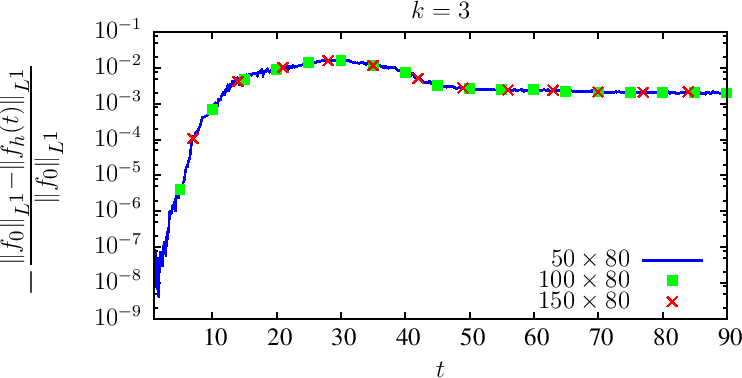}
		  \hfill
		  \includegraphics[scale=0.65, angle=0]{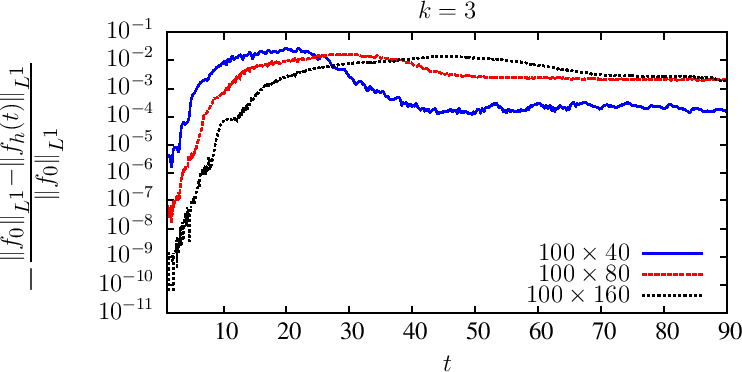}
		  \\
		  \includegraphics[scale=0.65, angle=0]{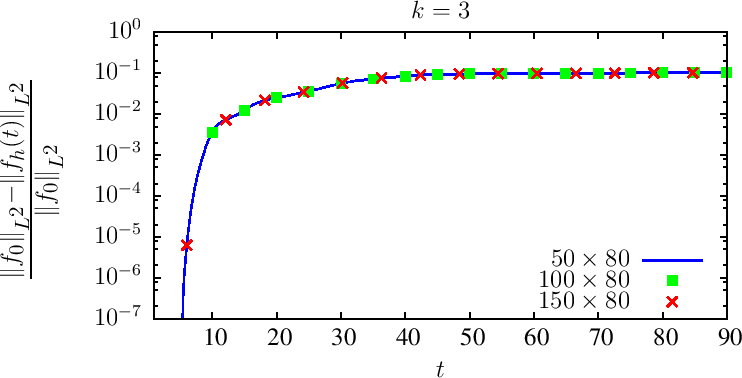}
		  \hfill
		  \includegraphics[scale=0.65, angle=0]{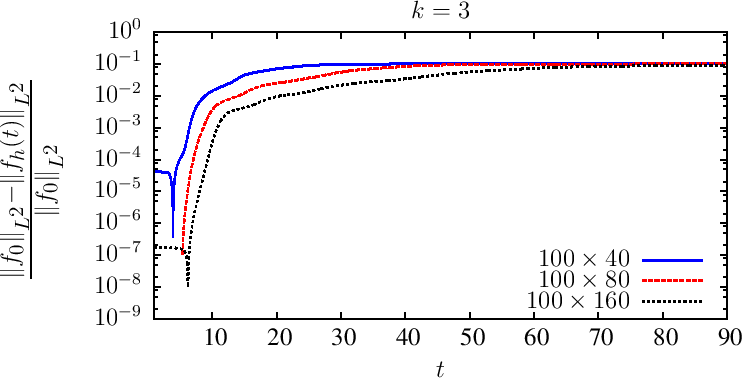}       
		  \caption{{\bf Two stream instability:} time evolution of the deviation from conservation of the total energy \eqref{dev:ener} (top graphics), $L^{1}$-norm of $f_h$ \eqref{dev:l1} (center) and $L^{2}$ norm of $f_h$ (bottom graphic). Diagrams on the left show refinement in $x$; on the right, refinement in $v$.}
		\label{fig:sim-vp-2s2}
	\end{figure}
\subsection{Two stream instability II}
We set now $\Omega_x = [0, 13\pi]$ and $\Omega_v = [-8,8]$ and take the initial data as in \cite{qiuCW}
\begin{equation}
	f(x,v,0) = \frac{( 1 + 0.05 \cos(K\,x) )}{2 v_{th} \sqrt{2 \pi} } 
	\left(
	\mbox{exp}\left(  - \frac{(v - w)^2}{2 v_{th}^2 } \right) +
	 \mbox{exp}\left( - \frac{(v + w)^2}{2 v_{th}^2 } \right)
	 \right) 	 \quad (x,v)\in \Omega \;,
\end{equation}
with  $v_{th} = 0.3$, $w=0.99$ and $K=\frac{2}{13}$. The initial data for this test consists of the two unstable flow moving in the opposite direction of each other.  The approximate distribution functions $f_h$ obtained at time $t=70$, with the DG-LDG(v) using a mesh $256 \times 100$  and $k=1,2,3$  are represented in Figure \ref{fig:sim-vp-2sII2}.

\begin{figure}[!htb]
		\centering
  		  \subfloat[$k=1$]{
		  \includegraphics[scale=0.32, angle=-90, clip=true, viewport=1.5in 1.5in 7.in 10.5in]{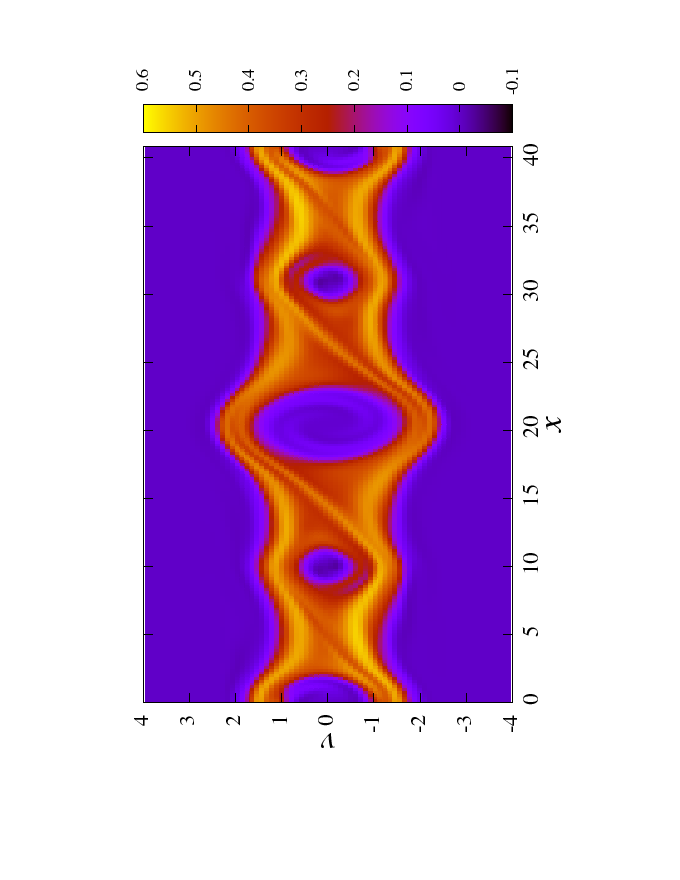}} 
		  \hfill		
		  \subfloat[$k=2$]{
		  \includegraphics[scale=0.32, angle=-90, clip=true, viewport=1.5in 1.5in 7.in 10.5in]{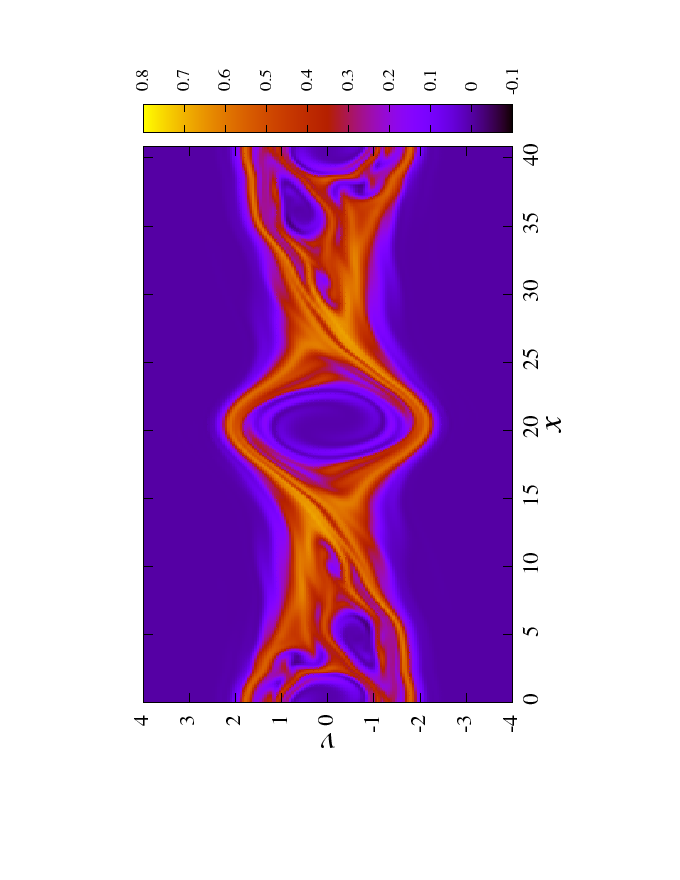}}                
		  \\
		  \subfloat[$k=3$]{
		  \includegraphics[scale=0.32, angle=-90, clip=true, viewport=1.5in 1.5in 7.in 10.5in]{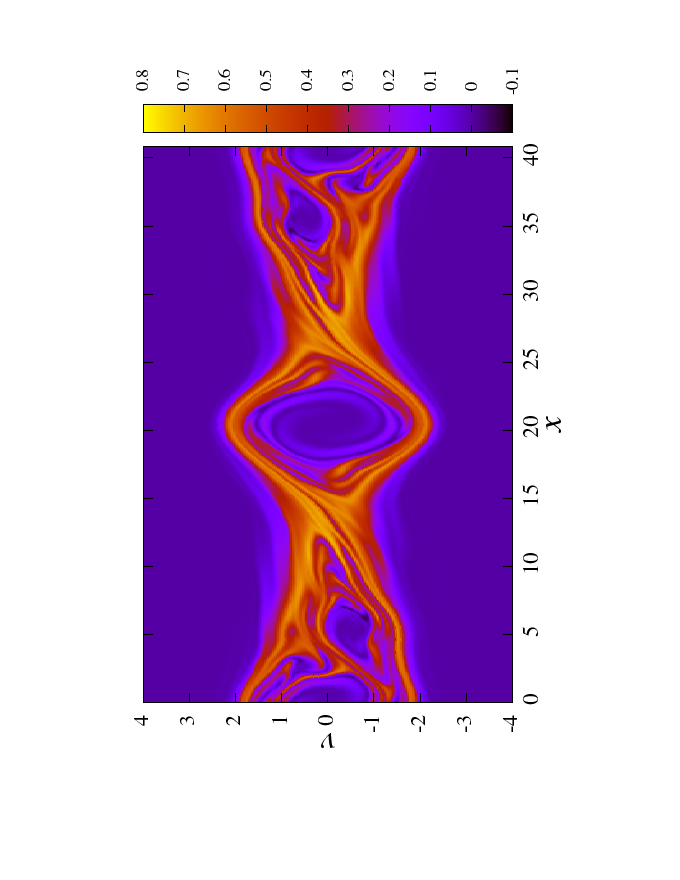}} 
		  \hfill
		  \subfloat[$k=6$]{
		  \includegraphics[scale=0.32, angle=-90, clip=true, viewport=1.5in 1.5in 7.in 10.5in]{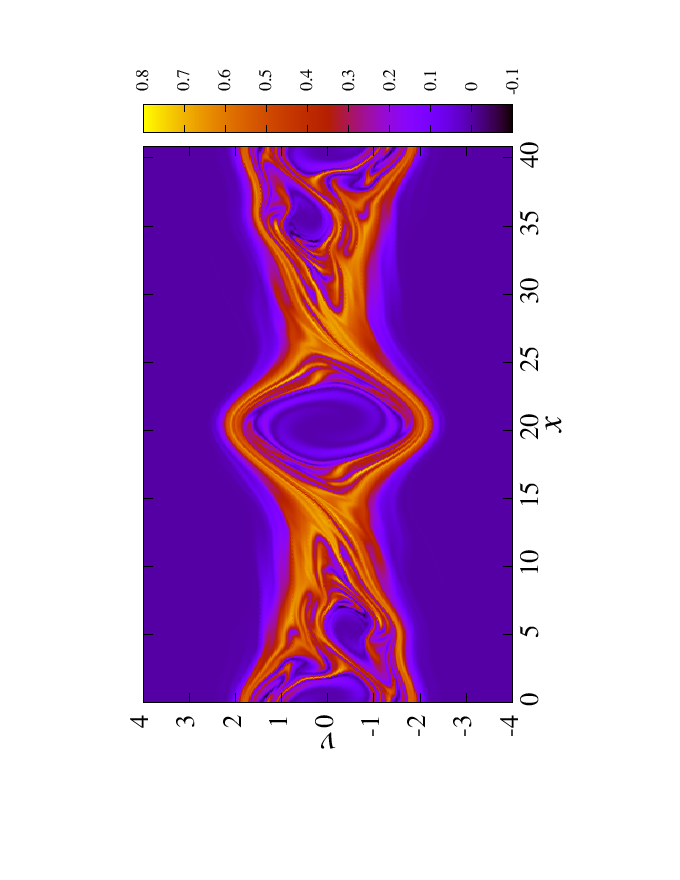}} 		
		  \caption{{\bf Two stream instability II:} solution  at $t=70$ using mesh $256 \times 100$.}
		\label{fig:sim-vp-2sII2}
	\end{figure}

In Figure \ref{fig:sim-vp-2sII1} are depicted the time evolution of the deviation from conservation of the total discrete energy. The left diagram shows the effect of mesh refinement in the quantity \eqref{dev:ener}  for the DG-LDG(v) method with $k=3$.  On the diagram on the right, we have represented the  time evolution of  such deviation for different polynomial degrees  $k=1,2,3$ using a fixed mesh $256\times 100$.  From the right diagram it can be appreciated that $k=1$  does not yield to the conservation of the total energy; in fact the error is almost 10 orders of magnitude higher than for $k\ge 2$, even on a very fine mesh. This result indicates and confirms that the hypothesis $k\geq 2$ in Theorem \ref{teo:ene} is indeed necessary for the DG-LDG(v) scheme to preserve of the total energy of the system.
\begin{figure}[!htb]
		\centering
		  \includegraphics[scale=0.65, angle=0]{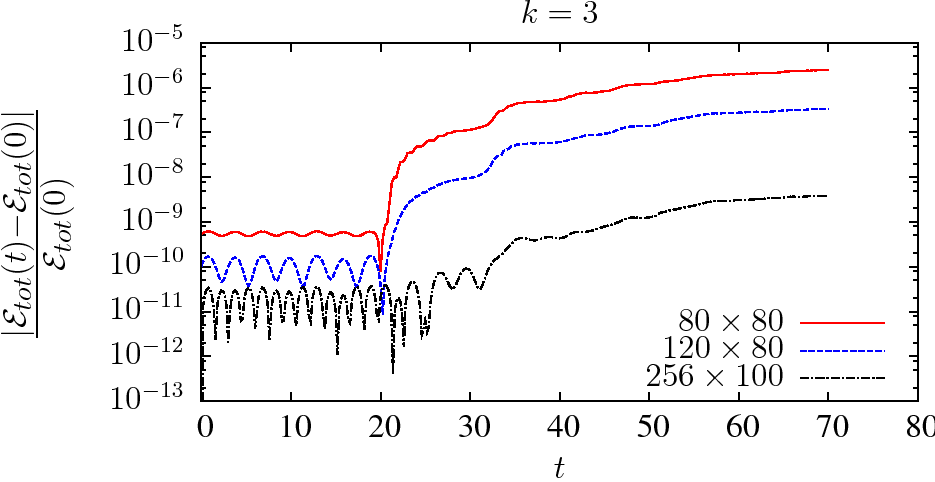}                  
		  \hfill
		  \includegraphics[scale=0.65, angle=0]{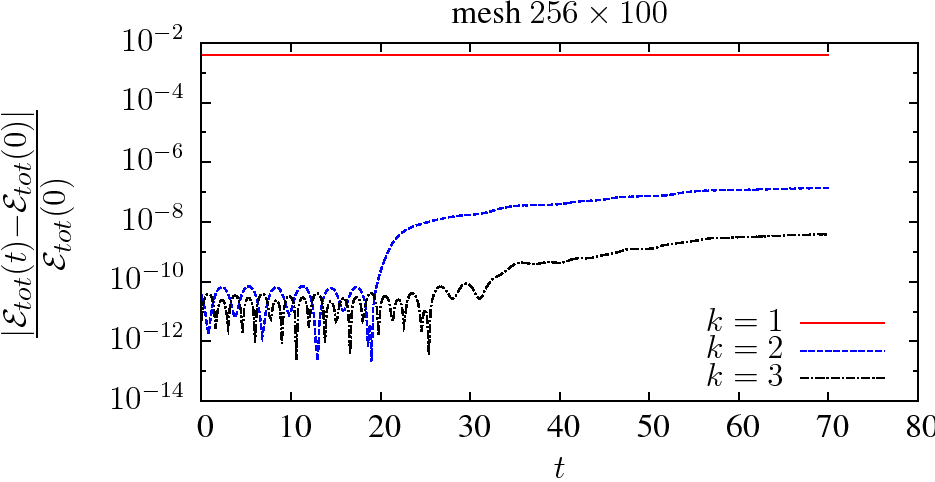}                 		  		  		  
		\caption{{\bf Two stream instability II:}  time evolution of the deviation from its initial value of the total discrete energy.}
		\label{fig:sim-vp-2sII1}
	\end{figure}
\section{Boundary value problem: a nonlinear plane diode}\label{sec:7}
In this section we consider the extension of the proposed DG schemes to approximate a Vlasov-Poisson  boundary value problem.  
\subsection{Model Problem: a nonlinear plane diode}
We first describe the model problem, revising also some key results  from  \cite{filbet_shu} related to its theoretical analysis. 
Denoting by $E:=\Phi_{x}$, the Vlasov-Poisson problem we consider is written as:
\begin{align}
		f_t + v f_x + E f_v &= 0 \qquad  &&(x,v,t) \in   [0,1] \times [-L,L] \times [0,\tf], \label{bv:1}\\
\partial_x E = \rho&=\int_{-L}^{L} fdv 	 \qquad  &&(x,t) \in   [0,1] \times  [0,\tf], \label{bv:0}\\
	\Phi(0,t) = 0, \quad  &\Phi(1,t) = \lambda(t) \qquad &&\forall \, t\in [0,\tf],\label{bv:02}
 \end{align}
 with boundary conditions
 \begin{equation}\label{bv:12}
 \left\{\begin{aligned}
  f(0,v,t) &= g(v,t), \quad &&v>0\quad \qquad \forall \, t\in [0,\tf],\\
 f(1,v,t)&=0,  \quad &&v<0 \qquad \quad \forall \, t\in [0,\tf], 
 \end{aligned}\right.
 \end{equation}
 where $g(v,t)$ is some given function. The system is complemented with an initial data $f(x,v,0)=f_0(x,v)$. Note that the Dirichlet boundary conditions for the Poisson problem imply 
\begin{equation}
 \int_0^1 E(x,t)\, dx = \lambda(t), \qquad \forall\, t\in [0,\tf].
 \end{equation}
 This problem models the evolution of a collisionless electron gas under the influence of the self-consistent electrostatic field $E$ in the interval $[0,1]$, where electrons are emitted at one end   of the interval ($x=0$) and absorbed at the other end ($x=1$).
Due to the absorbing boundary condition, in general  the distribution function $f$ might become discontinuous in finite time.  Existence and uniqueness of solutions of nonlinear boundary value problems of this kind have been studied to a certain extent.
The stationary one dimensional problem was considered in \cite{filbet2}  and the higher dimensional case in  \cite{M13, filbet}. For the time dependent problem \eqref{bv:1}-\eqref{bv:0}-\eqref{bv:02}-\eqref{bv:12}, weak solutions were constructed in \cite{M3,Arsenev73}. \\
In \cite{filbet_shu} the authors carry out an study of the regularity of the distribution function solution of the {\it nonlinear plane diode} \eqref{bv:1}-\eqref{bv:0}-\eqref{bv:02}-\eqref{bv:12}. In particular they show that under rather general assumption on the data $f_0$ and $g$, that the electron distribution is of bounded variation (BV) as a function of $x$ and $v$. This guarantees the uniqueness of the nonlinear plane diode. 
Furthermore, the authors study the influence of (the magnitude of) the external voltage $\lambda$ on the regularity of the solution. They show that if  initially $\lambda(0)$ is large the solution might become discontinuous, while if $\lambda(0)$ vanishes or if it is small and the inflow boundary $g(t,v)$ is non-vanishing, the electron distribution is of class $\mathcal{C}^{1}(\Omega)$ for all time. 

Prior to recall a couple of results from \cite{filbet_shu}, that allow for  quantifying what is mean by ``small'' or ``large''   external voltage,  we introduce some further notation. 
We denote by $\gamma_{S}$  the singular set, 
\begin{equation}
 \gamma_{S}=\{x=0,\,\, v=0\} \cup \{x=1,\,\, v=0\},
\end{equation}
and by $\gamma_0^{+}$ the incoming set at the boundary $x=0$:
\begin{equation}\label{def-gamma0}
\gamma_0^{+}=\{(0,v,t)\quad |\quad v>0, \quad 0\leq t\leq \tf\}\;.
\end{equation}
We also define the norm:
\begin{equation}\label{def:normaAA}
\||[f_0,g]\||:=\|f_0\|_{\mathcal{C}^{1}}+\|g\|_{\mathcal{C}^{1}}+\|v \partial_x f_0\|_{\infty} +\|v \partial_v f_0\|_{\infty} +\|v^{-1} \partial_t g\|_{\infty}+\|v^{-1} \partial_v g\|_{\infty}\;.
\end{equation}

 Next result corresponds to \cite[Lemma 4.3]{filbet_shu}
\begin{lemma}\label{lema4}
(\cite[Lemma 4.3]{filbet_shu}) 
Let  $f_0, g \in \mathcal{C}^{1}(\O)$ with compact support be such that 
\begin{equation}\label{tvp2}
 TV[f_0]+\int_{\gamma_0^{+}} \left( (1+v)|g_v| +|g_t|\right)+\|v^{p}f_0\|_{\infty}+\|v^{p}g\|_{\infty} <\infty\quad \mbox{for some  } p>2,
\end{equation}
where $\gamma_0^{+}$ is the incoming set at the boundary $x=0$ defined in \eqref{def-gamma0}. Assume  they satisfy the following compatibility conditions:
\begin{equation}\label{compat:0}
\left\{\begin{aligned}
&\qquad \qquad f_0(0,v)=g(v,0) &&\quad v>0,\\
& \qquad \qquad  f_0(1,v)=0 &&\quad v<0, \\
&\partial_t g(v,0) + v \partial_x f_0(0,v) + E(0,0) \partial_v f_0(0,v) = 0 &&\quad v>0, \\
&\qquad v \partial_x f_0(1,v) + E(1,0) \partial_v f_0(1,v) = 0 &&\quad v<0,
\end{aligned}\right.
\end{equation}
and that $\|| [f_0,g]\||<\infty$, where $\|| [\cdot]\||$ is the norm defined in \eqref{def:normaAA}.
 Furthermore, let 
\begin{equation}
f_0(x, v) \ne  f_0(0, 0) \quad \mbox{ for  } (x,v)\ne (0,0) \mbox{  and } x, v \mbox{  small.}
\end{equation}
Then, if the external voltage satisfies
\begin{equation}\label{eq:shu-condition}
		\lambda(0) > \int_0^1 \int_{-L}^{L} (1-x) f_0 \, dv\, dx\;,
	\end{equation}
	then $f(x,v,t)$ is not continuous.
\end{lemma}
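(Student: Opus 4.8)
\emph{Proof strategy.}
The plan is to argue by contradiction, assuming $f$ is continuous on $\bar\Omega\times[0,\tf]$. First I rewrite the hypothesis \eqref{eq:shu-condition} in a transparent form. Since $E=\Phi_x$ with $\Phi(0,t)=0$ and $\Phi(1,t)=\lambda(t)$ one has $\int_0^1 E(x,t)\,dx=\lambda(t)$, while $\partial_x E=\rho$. Integrating $E(x,0)=E(0,0)+\int_0^x\rho_0(y)\,dy$ (with $\rho_0:=\int_{-L}^{L}f_0\,dv$) over $[0,1]$ and applying Fubini gives
\[
\lambda(0)=E(0,0)+\int_0^1(1-y)\rho_0(y)\,dy=E(0,0)+\int_0^1\int_{-L}^{L}(1-x)f_0\,dv\,dx ,
\]
so \eqref{eq:shu-condition} says precisely that $E(0,0)>0$. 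Moreover $f_0\ge 0$ forces $\rho_0\ge 0$, hence $E(x,0)\ge E(0,0)>0$ on $[0,1]$; since $f$ (hence $\rho$, hence $E$) is assumed continuous, there is $\tau>0$ with $E>0$ on $[0,1]\times[0,\tau]$. Under the continuity assumption the BV–regularity framework of \cite{filbet_shu} (which uses exactly \eqref{tvp2}, the compatibility conditions \eqref{compat:0}, and the finiteness of $\||[f_0,g]\||$ from \eqref{def:normaAA}) guarantees that the characteristic system $\dot x=v$, $\dot v=E(x,t)$ is well posed and that $f$ is obtained by transporting $f_0$ and the inflow datum $g$ along characteristics.

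Next I examine the characteristics near the singular point $\{x=0,\,v=0\}$ for $t\in(0,\tau]$. Since $E>0$ there, a characteristic issued at $t=0$ from $(x_0,v_0)$ with $x_0>0$ and $-v_0>0$ small is decelerated, its $x$-coordinate attains a positive minimum, and it turns back into the domain; the trajectories that reach $(0,0)$ exactly when $v=0$ separate these returning characteristics from those leaving through $x=0$. Tracing the characteristic backwards from $(0,0,t)$ one finds, using $E>0$, that it meets the initial slice at
\[
(\xi(t),\eta(t))=\Bigl(\tfrac12\,E(0,0)\,t^{2},\ -E(0,0)\,t\Bigr)+o(t),
\]
which for small $t>0$ is distinct from $(0,0)$ and lies in the neighbourhood where, by hypothesis, $f_0(x,v)\ne f_0(0,0)$. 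Consequently, for $v<0$ small, $f(0,v,t)$ equals $f_0$ evaluated at the endpoint of the backward characteristic through $(0,v,t)$, so $\lim_{v\to 0^-}f(0,v,t)=f_0(\xi(t),\eta(t))$, whereas the inflow condition gives $\lim_{v\to 0^+}f(0,v,t)=g(0,t)$. If $f$ is continuous at $(0,0,t)$ these limits must coincide, which forces
\[
g(0,t)=f_0(\xi(t),\eta(t))\qquad\text{for all small } t>0 .
\]

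The remaining step is to contradict this identity. At $t=0$ both sides equal $f_0(0,0)=g(0,0)$ by the compatibility relation $f_0(0,v)=g(v,0)$; differentiating at $t=0$ and using the third relation in \eqref{compat:0} in the limit $v\to 0^+$ shows the two sides also agree to first order in $t$. To go further one computes $\partial_t E(0,0)$ from the continuity equation $\rho_t+\bigl(\int_{-L}^{L}vf\,dv\bigr)_x=0$ together with $E(0,t)=\lambda(t)-\int_0^1(1-y)\rho(y,t)\,dy$, and expands $f_0(\xi(t),\eta(t))$ accordingly; the assumption $f_0\ne f_0(0,0)$ near $(0,0)$ makes $t\mapsto f_0(\xi(t),\eta(t))$ depart from $f_0(0,0)$ in a definite, non‑degenerate way, which cannot be reconciled with the behaviour of $g(0,t)$ allowed by the (merely $C^1$) compatibility data. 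Hence the two admissible limits of $f$ at $(0,0,t)$ disagree for some small $t>0$, so $f$ is not continuous. The main obstacle is precisely this last comparison: because $f_0$ and $g$ are only of class $\mathcal{C}^1$, the expansions of $f_0(\xi(t),\eta(t))$ and of the boundary traces of $f$ must be carried out carefully — working with one–sided difference quotients along the returning characteristics — rather than by assuming regularity beyond what is available.
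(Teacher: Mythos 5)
First, a remark on the comparison itself: the paper does not prove this statement — it is imported verbatim as \cite[Lemma 4.3]{filbet_shu} with no argument given — so there is no in-paper proof to measure your attempt against, and I can only assess it on its own terms.

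Your geometric setup is correct and is the standard mechanism for this result. The reformulation of \eqref{eq:shu-condition} is right: from $E=\Phi_x$, $\Phi(0,t)=0$, $\Phi(1,t)=\lambda(t)$ and $\partial_x E=\rho$ one gets $\lambda(0)=E(0,0)+\int_0^1\int_{-L}^{L}(1-x)f_0\,dv\,dx$, so the hypothesis is exactly $E(0,0)>0$; and with $E>0$ near $(0,0)$ and the characteristic system $\dot x=v$, $\dot v=E$, the backward characteristic from the boundary point $(0,0,t)$ does reach the initial slice at $(\xi(t),\eta(t))=\bigl(\tfrac12E(0,0)t^2+o(t^2),\,-E(0,0)t+o(t)\bigr)\ne(0,0)$, so continuity of $f$ at $(0,0,t)$ would force $g(0^+,t)=f_0(\xi(t),\eta(t))$. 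The genuine gap is in how you extract a contradiction from this identity. Your final step claims that the hypothesis $f_0(x,v)\ne f_0(0,0)$ near $(0,0)$ makes $t\mapsto f_0(\xi(t),\eta(t))$ depart from $f_0(0,0)$ ``in a definite, non-degenerate way'' that cannot be matched by $g(0,t)$. Neither half of that is justified: the hypothesis only says the values \emph{differ}, not by how much (since $f_0$ is merely $\mathcal{C}^1$, the difference $f_0-f_0(0,0)$ may vanish to arbitrarily high order at the origin while being nonzero nearby, so there is no non-degenerate rate to exploit), and nothing you establish prevents the given datum $g(0^+,t)$ from happening to track $f_0(\xi(t),\eta(t))$ for all small $t$. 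The detour through $\partial_t E(0,0)$ and the continuity equation does not repair this.

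The missing ingredient is the weighted bound $\|v^{-1}\partial_t g\|_{\infty}<\infty$ contained in the norm \eqref{def:normaAA}, which you list among the hypotheses but never use. It gives $|\partial_t g(v,t)|\le C\,v$ on $\gamma_0^+$, hence $\lim_{v\to0^+}\partial_t g(v,t)=0$ for every $t$, and therefore $g(0^+,t)\equiv g(0^+,0)=f_0(0,0)$ by the first compatibility condition in \eqref{compat:0}. With this, continuity at $(0,0,t)$ would force the \emph{exact} identity $f_0(\xi(t),\eta(t))=f_0(0,0)$ at points $(\xi(t),\eta(t))\ne(0,0)$ arbitrarily close to the origin, which contradicts the stated hypothesis on $f_0$ outright — no expansion, no quantitative non-degeneracy, and no analysis of $\partial_t E$ are needed. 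As written, your concluding comparison does not close, so the argument is incomplete at its decisive step.
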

In absence of external voltage, the solution can be shown to be of class $\mathcal{C}^{1}$.

\begin{theorem}\label{teo4}
(\cite[Theorem 4.4]{filbet_shu}.) 
Let $\lambda(0)=0$.  Assume  $f_0, g \in \mathcal{C}^{1}(\O)$ with compact support be such that $\|| [f_0,g]\||<\infty$ and assume they do satisfy \eqref{tvp2} and \eqref{compat:0}.
Then $f( x, v,t) \in \mathcal{C}^{1}\left( (\Omega\times [0,\tf])\smallsetminus \gamma_{S}\right)  \cap \mathcal{C}^{0}(\Omega\times [0,\tf])$.
\end{theorem}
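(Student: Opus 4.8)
This statement is \cite[Theorem 4.4]{filbet_shu}, and we sketch the plan of proof. The idea is to reconstruct $f$ by the method of characteristics and to deduce its regularity from that of the backward flow and of the associated exit map. The characteristics of \eqref{bv:1} solve
\begin{equation*}
\frac{d\mathcal{X}}{ds}=\mathcal{V}(s),\qquad \frac{d\mathcal{V}}{ds}=E\bigl(\mathcal{X}(s),s\bigr),
\end{equation*}
and $f$ is constant along them. The first step is to note that, under the bounded variation hypothesis \eqref{tvp2} and the control $\||[f_0,g]\||<\infty$, the charge density $\rho=\int_{-L}^{L}f\,dv$ stays bounded, so that $E$ is Lipschitz in $x$ and continuous in $t$; this regularity is then upgraded to $E\in\mathcal{C}^{1}$ (and eventually $\mathcal{C}^{2}$) by a bootstrap, or equivalently by carrying the whole construction inside the fixed point scheme that produces $f$. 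Below one may therefore treat $E$ as a given $\mathcal{C}^{1}$ field, so that the flow above depends $\mathcal{C}^{1}$ on $(s;x,v,t)$.

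\textbf{Exit map and representation.} For $(x,v,t)\in\O\times[0,\tf]$ one traces the characteristic backward from $(\mathcal{X}(t),\mathcal{V}(t))=(x,v)$ and lets $\tau=\tau(x,v,t)\in[0,t]$ be the first time at which it leaves $[0,1]\times[-L,L]$, setting $\tau=0$ if the initial slice is reached first. Since $f$ has compact support in $v$ and $E$ is bounded, the velocity cut-offs $\pm L$ are never attained, so any exit takes place through $x\in\{0,1\}$ and
\begin{equation*}
f(x,v,t)=
\begin{cases}
f_{0}\bigl(\mathcal{X}(0),\mathcal{V}(0)\bigr)& \tau=0,\\
g\bigl(\mathcal{V}(\tau),\tau\bigr)& \text{exit through }x=0,\\
0& \text{exit through }x=1.
\end{cases}
\end{equation*}
Applying the implicit function theorem to the equation $\mathcal{X}(s)=0$ (resp.\ $\mathcal{X}(s)=1$) shows that $\tau$, the exit point, and the exit velocity depend $\mathcal{C}^{1}$ on $(x,v,t)$ wherever the exit is transversal, i.e.\ $\mathcal{V}(\tau)\neq0$; transversality can fail only along the characteristics issued from the two points of $\gamma_{S}$, which are exactly the grazing ones.

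\textbf{Where $\lambda(0)=0$ enters.} Integrating $\partial_{x}E=\rho$ and using $\int_{0}^{1}E(x,0)\,dx=\lambda(0)$ gives
\begin{equation*}
E(0,0)=\lambda(0)-\int_{0}^{1}\!\!\int_{-L}^{L}(1-x)f_{0}\,dv\,dx,\qquad
E(1,0)=\lambda(0)+\int_{0}^{1}\!\!\int_{-L}^{L}x\,f_{0}\,dv\,dx,
\end{equation*}
so $\lambda(0)=0$ together with $f_{0}\ge0$ forces $E(0,0)\le0$ and $E(1,0)\ge0$. Propagating these sign conditions in time (which is part of the work), one checks that near the emission corner $(0,0)$ every backward characteristic leaves through the inflow set $\gamma_{0}^{+}$, directly when $v>0$ and through a returning excursion when $v<0$, so that the boundary trace only ever picks up values of $g$ and varies continuously up to $(0,0)$; near $(1,0)$ the backward characteristics either reach $t=0$ at a point where $f_{0}=0$ by \eqref{compat:0}, or return to $x=1$ with negative velocity, where $f=0$. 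In particular the turning region that, in the regime of Lemma \ref{lema4} (there $E(0,0)>0$), splits $\O$ along a curve across which the trace jumps from a value of $g$ at a positive time to a value of $f_{0}$ transported along a grazing characteristic, is here absent.

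\textbf{Gluing and conclusion.} Along the separating characteristic surfaces $\partial\{\tau=0\}$ the three branches of the representation are matched by the compatibility conditions \eqref{compat:0}: the zeroth order relations $f_{0}(0,v)=g(v,0)$ for $v>0$ and $f_{0}(1,v)=0$ for $v<0$ give continuity, and the first order transport relations make the one-sided gradients agree. Combined with the $\mathcal{C}^{1}$ dependence of the flow and the $\mathcal{C}^{1}$ transversal exit map, this yields $f\in\mathcal{C}^{1}\bigl((\O\times[0,\tf])\smallsetminus\gamma_{S}\bigr)$. Continuity on all of $\O\times[0,\tf]$ then follows from the previous step, since approaching $\gamma_{S}$ the backward characteristic degenerates to a grazing one with $\mathcal{V}(\tau)\to0$ and $\tau$ continuous, while the relevant trace value converges (to $g(0,t)$ at $(0,0)$ and to $0$ at $(1,0)$); finally $f\in\mathcal{C}^{0}$ closes the bootstrap on $\rho$, hence on $E$. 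The main obstacle is exactly this last, degenerate step: showing that the sign $E(0,0)\le0$ forced by $\lambda(0)=0$ persists long enough to rule out the separating turning curve, and estimating the flow uniformly in the $O(\sqrt{x})$-thin tangency layer near $\{x=0\}$ and $\{x=1\}$, which is what upgrades ``$\mathcal{C}^{1}$ off $\gamma_{S}$'' to ``continuous up to $\gamma_{S}$''.
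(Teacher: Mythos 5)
The paper offers no proof of this statement: Theorem \ref{teo4} is quoted verbatim from \cite[Theorem 4.4]{filbet_shu}, so there is no internal argument to compare against. Your sketch does follow the strategy of that reference: represent $f$ by backward characteristics of \eqref{bv:1}, show that the exit time and exit data depend $\mathcal{C}^{1}$ on $(x,v,t)$ wherever the backward characteristic meets $\{x=0\}$ or $\{x=1\}$ transversally, use the sign of $E$ at the boundary forced by $\lambda(0)=0$ to exclude the turning configuration responsible for the discontinuity of Lemma \ref{lema4}, and glue the three branches of the representation via the compatibility conditions \eqref{compat:0}. Your boundary identities $E(0,0)=\lambda(0)-\int_0^1\int_{-L}^{L}(1-x)f_0\,dv\,dx$ and $E(1,0)=\lambda(0)+\int_0^1\int_{-L}^{L}x\,f_0\,dv\,dx$ are correct and correctly explain why the threshold in \eqref{eq:shu-condition} is exactly the sign change of $E(0,0)$.

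As a proof, however, the proposal is incomplete, and the gaps sit exactly at the substance of the theorem. First, the persistence in time of the sign conditions $E(0,t)\le 0$ and $E(1,t)\ge 0$ (or of whatever weaker statement suffices to rule out grazing characteristics that re-enter the domain carrying $f_0$-data) is asserted but not established; without it, the separating turning curve of the discontinuous regime could be created at a positive time even though it is absent at $t=0$. Second, the uniform control of the flow in the thin tangency layer near $\gamma_S$, which is what upgrades ``$\mathcal{C}^{1}$ off $\gamma_S$'' to continuity on all of $\Omega\times[0,\tf]$, is only named. Third, the implicit-function-theorem step needs $E$ to be $\mathcal{C}^{1}$ in $x$, i.e.\ $\rho\in\mathcal{C}^{0}$, which a priori requires the continuity of $f$ you are trying to prove; this circularity must be broken by running the whole argument inside the iteration scheme that constructs the solution, and you only gesture at that. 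So your proposal is a faithful outline of the cited proof, but not a self-contained replacement for it.
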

\subsection{Numerical schemes}
To approximate the boundary value problem, \eqref{bv:1}-\eqref{bv:0}-\eqref{bv:02}-\eqref{bv:12}, we need to modify slightly the definition of the DG schemes given in Section \ref{sec:3}.
The change of sign of the self-consistent electrostatic field in the Vlasov equation, forces the following change in the definition of the method: 
find $(f_h,E_h):[0,\tf]\lor \calZ_{h}^{k}\times V_h^{k}$ such that
\begin{equation*}
\dyle\sum_{i=1}^{N_{x}}\dyle\sum_{j=1}^{N_{v}}\calB^{h}_{ij}(E_h;f_h,\varphi_h)=0
\quad \forall\varphi_h\in\calZ_{h}^{k}\;,
\end{equation*}
where
\begin{align}
\calB_{ij}(E_h;f_h,\varphi_h)=&\dyle{  \int_{\K_{ij}}
\frac{\partial f_h}{\partial t}\varphi_h \,dv\,dx
- \int_{\K_{ij}} v f_h \frac{\partial \varphi_h}{\partial x} \,dv\,dx - \int_{\K_{ij}} E_{h}^{i} f_{h} \frac{\partial \varphi_h}{\partial v}\,dv\,dx} &&\nonumber \\
&\dyle{+ \int_{J_{j}}\left[ (\widehat{(vf_h)} \varphi^{-}_h)_{i+1/2,v}-(\widehat{ (vf_h)}\varphi^{+}_h)_{i-1/2,v}  \right] dv } &&\label{method0000}\\
&\dyle{+ \int_{I_{i}}\left[
\left(\widehat{\left(E^{i}_{h}f_h\right)}
\varphi^{-}_h\right)_{x,j+1/2}-\left(\widehat{
\left(E^{i}_{h}f_h\right)}\varphi^{+}_h\right)_{x,j-1/2}  \right].
dx },&& \nonumber
\end{align}
with the numerical flux $\widehat{vf_h}$ defined as in \eqref{flux:v} and the definition of $\widehat{E_hf}$ is changed  to 
\begin{equation}\label{AA}
\widehat{E_{h}^{i}f_{h}} =\left\{\begin{array}{cc}
E_h^{i}\, f_{h}^{-}& \mbox{  if   } \calP^{0}(E_h^{i})\geq 0\\
E_h^{i}\, f_{h}^{+} & \mbox{  if   } \calP^{0}(E_h^{i})<0
\end{array}\right. \qquad \widehat{E_{h}^{i}f_{h}}= \av{E_{h}^{i}f_{h}}-\mbox{sign}\left(\calP^{0}(E_h^{i})\right)\cdot\frac{E_h^{i}}{2} \jump{f_h}\;. 
\end{equation}
Or, if using the weighted average
\begin{equation}\label{AA2:b}
\widehat{E_{h}^{i}f_{h}} =\left\{\begin{aligned}
 \av{E_{h}^{i}f_{h}}+\frac{|E_h^{i}|}{2} \jump{f_h} &\qquad \mbox{  if   }\nexists\, x^{\ast}  \in I_i \quad \mbox{such that} \quad E_h^{i}(x^{\ast})=0\; &&\\
\av{E_{h}^{i}f_{h}}+E_h^{i} (\omega^{-}-\frac{1}{2}) \jump{f_h} & \qquad\mbox{  if   }\exists\, x^{\ast}  \in I_i \quad \mbox{such that} \quad E_h^{i}(x^{\ast})=0\;&&
\end{aligned}\right. 
\end{equation}
with $\omega^{-}, \omega^{+}=1-\omega^{-}$ defined as in \eqref{def:w}.

At the boundary $\partial\O$,  we still reflect the compactness in $v$ in the numerical flux $\widehat{E_{h}^{i}f_{h}}$;
\begin{equation*}
(\widehat{E_{h}^{i}f_{h}})_{x,1/2}=(\widehat{E_{h}^{i}f_{h}})_{x,N_v+1/2}=0\;,\qquad
  \forall \,(x,v)\in\mathcal{I}\times \mathcal{J},
\end{equation*}
while for the $x$-boundary nodes we  account for the inflow boundary data:
\begin{equation*}
(\widehat{vf_h})_{1/2,v}=v (f_h^{-}-g(v,t)) \quad \forall v>0, \quad v \in \mathcal{J}\;.
\end{equation*}

To approximate the Poisson problem, we consider the LDG(v) method defined in \eqref{ldg0:a}-\eqref{ldg0:b}-\eqref{flux:DGv}.  The only modification required  is on the right hand side of the Poisson problem, since now there is no neutralizing background.At boundary nodes,  the numerical fluxes  are defined to account for the boundary conditions \eqref{bv:02}:
\begin{equation}\label{flux:BC}
\left\{\begin{aligned}
(\widehat{\Phi_{h}})_{1/2}&=0 \qquad  (\widehat{\Phi_{h}})_{N_x+1/2}=\lambda(t)&&\\
(\widehat{E_h})_{1/2} &=\av{E_h}_{1/2}+c_{11}\jump{\Phi_{h}}_{1/2}\;,&&\\
(\widehat{E_h})_{N_x+1/2} &=\av{E_h}_{N_x+1/2}+c_{11}\jump{\Phi_{h} -\lambda(t)}_{N_x+1/2}\;,&&
\end{aligned}\right.
\end{equation}
where $c_{11}=c\,
(k+1)^{2} h_x^{-1}$. 
For the time discretization we use the RK integrator described in Section \ref{sec:4}. 
\subsection{Numerical experiments}
We now present some numerical simulations obtained with the DG-LDG(v) method for the {\it nonlinear plane diode} . The main goal of this section is to verify if the proposed methods are able to detect and capture the singularity of the solution, when the external voltage $\lambda_0=\lambda(0)$ is large and satisfies the condition \eqref{eq:shu-condition}. 
Following \cite{filbet_shu}, we consider the following initial condition  
\begin{eqnarray*}
	f_0(x,v) &=& n_0(x) \frac{1}{\sqrt{2\pi}} v^2 \exp( -v^2 / 2) \\
	n_0(x) &=& \left\lbrace 
	\begin{array}{ll}
		(1+\gamma x)(1- 4x^2)^4 & x \in [0,0.5] \\
		0 & \text{else}
	\end{array}
	\right.
\end{eqnarray*}
and the inflow boundary data:
\begin{eqnarray*}
	g(v,t) = \frac{1}{\sqrt{2 \pi} } v^2 \exp( -v^2 / 2) \quad \forall\, t\in [0,\tf].
\end{eqnarray*}
 \begin{figure}[!htb]
		\centering   		  		  		  
		  \includegraphics[scale=0.65, angle=0]{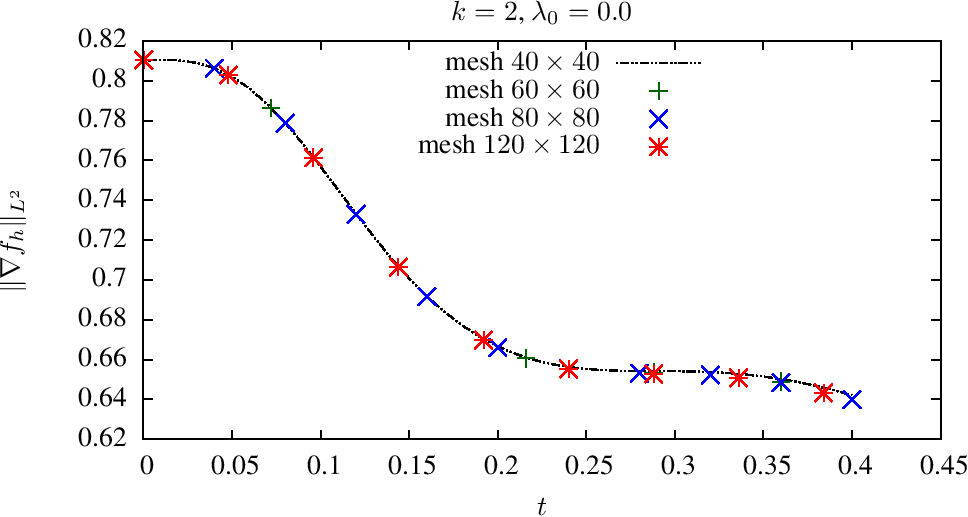}   
		  \hfill               
		  \includegraphics[scale=0.65, angle=0]{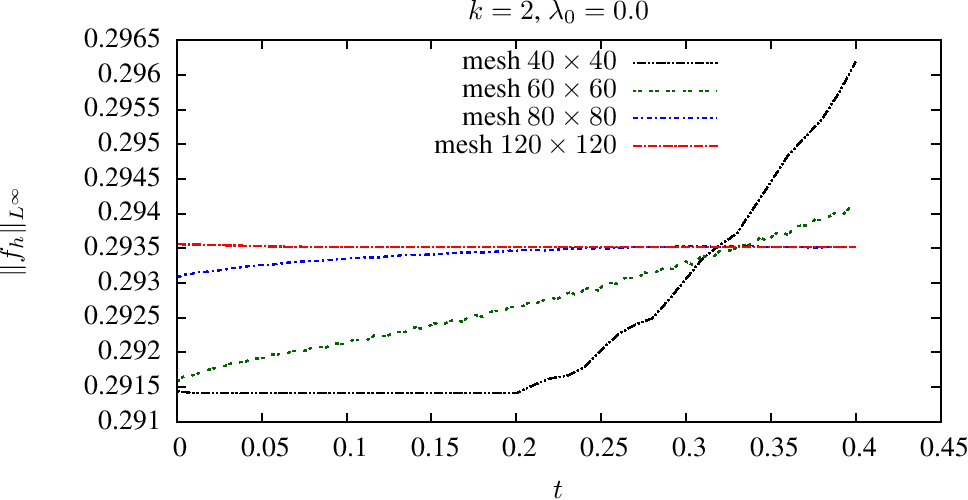}     
		\caption{ {\bf Plane Diode:} $\lambda(0)=0$. Evolution of $\Vert \nabla f_h \Vert_{L^2}$ (left) and 
		$\Vert f_h \Vert_{L^\infty}$ (right) for different mesh size.
		}
		\label{fig:bvp:00}
	\end{figure}
We have taken $L= 10$ to set the domain in velocity, since both $f_0$ and $g$ are of compact support in  $[-L,L]=[-10,10]$.
It can be checked that the data satisfies the smoothness conditions $\|| [f_0,g]\||<\infty$ and  \eqref{tvp2} together with the compatibility conditions given in \eqref{compat:0}. 
We have computed the approximate solution to \eqref{bv:1}-\eqref{bv:0}-\eqref{bv:02}-\eqref{bv:12} for different values of the external voltage $\lambda(0)$.  The main goal is to verify that the presented DG methods are able to detect the smoothness of the approximate solution. That is for $\lambda(0)= 0$, in view of Theorem \ref{teo4}, $f( x, v,t) \in \mathcal{C}^{1}\left( (\Omega\times [0,\tf])\smallsetminus \gamma_{S}\right)  \cap \mathcal{C}^{0}(\Omega\times [0,\tf])$, and we expect the approximate solution to reflect such regularity.\\ 
\begin{figure}[!htb]
		\centering
		  \includegraphics[scale=0.65, angle=0]{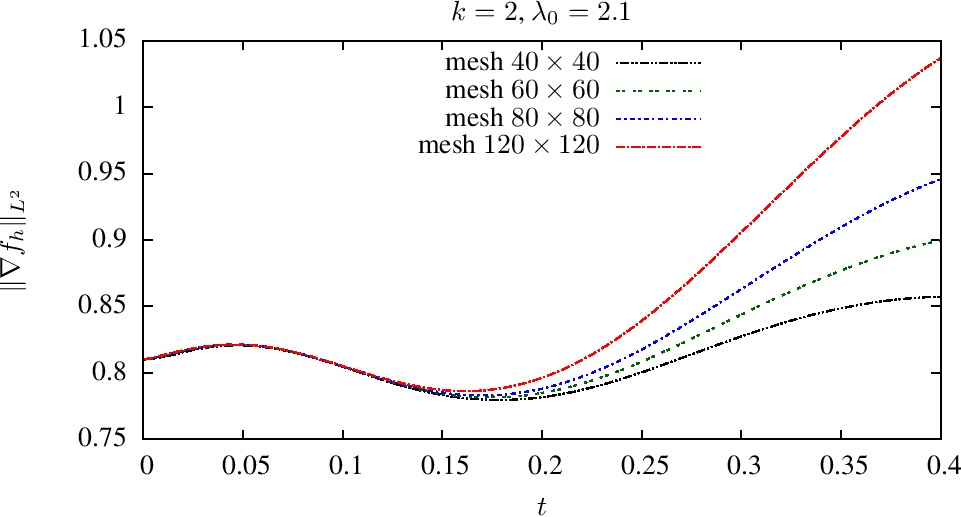}   
		  \hfill               
		   \includegraphics[scale=0.65, angle=0]{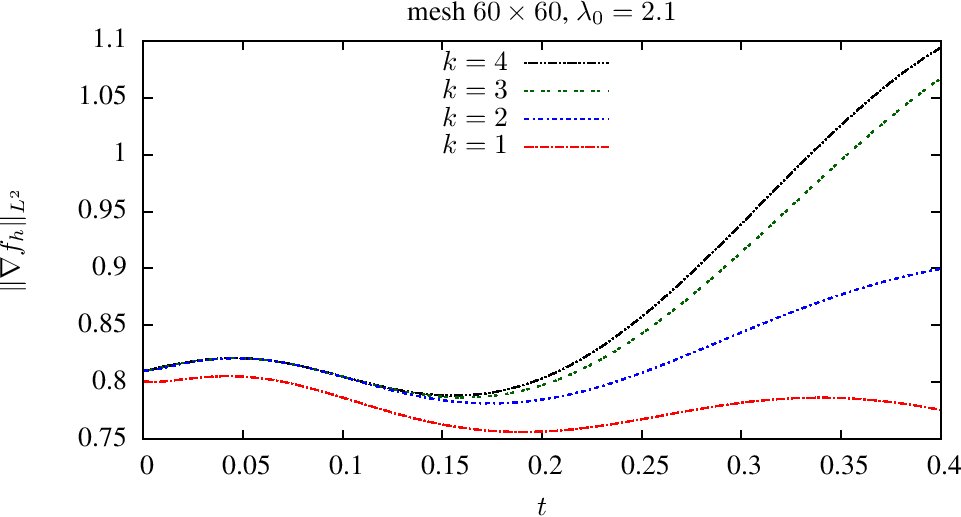} 
		 \caption{ {\bf Plane Diode:} Evolution of $\Vert \nabla f_h \Vert_{L^2}$  for different mesh size (left) and different polynomial degree (right).
		}
		\label{fig:bvp2}
	\end{figure}
	
In Figure \ref{fig:bvp:00} we have depicted the time evolution of $L^{2}$-norm of $\nabla f_h$ and $L^{\infty}$-norm of $ f_h$ for different mesh sizes. As can be observed from the figures, both quantities are finite and bounded and converge toward the same value (finite) as the mesh is refined.

We have run the same experiment for $\lambda(0)= 2.10947$ and $\lambda(0)=10$. For both values the condition \eqref{eq:shu-condition} is verified, and therefore in view of Lemma \ref{lema4} we expect the solution $f$ to become discontinuous. To assess the ability of the DG methods to capture the singularity (or change in smoothness of $f$) we have measured the time evolution of the discrete $L^{2}$ and $L^{\infty}$-norms of $\nabla f_h$ and study how they are affected under mesh refinement and by an increment in the polynomial degree ($k=1,2,3,4$). The results for $\lambda(0)= 2.10947$ are depicted in Figure \ref{fig:bvp2}. \\
\begin{figure}[!htb]
		\centering
		  \subfloat[$t=0.0$]{
		  \includegraphics[scale=0.32, angle=-90, clip=true, viewport=1.5in 1.5in 7.5in 10.5in]{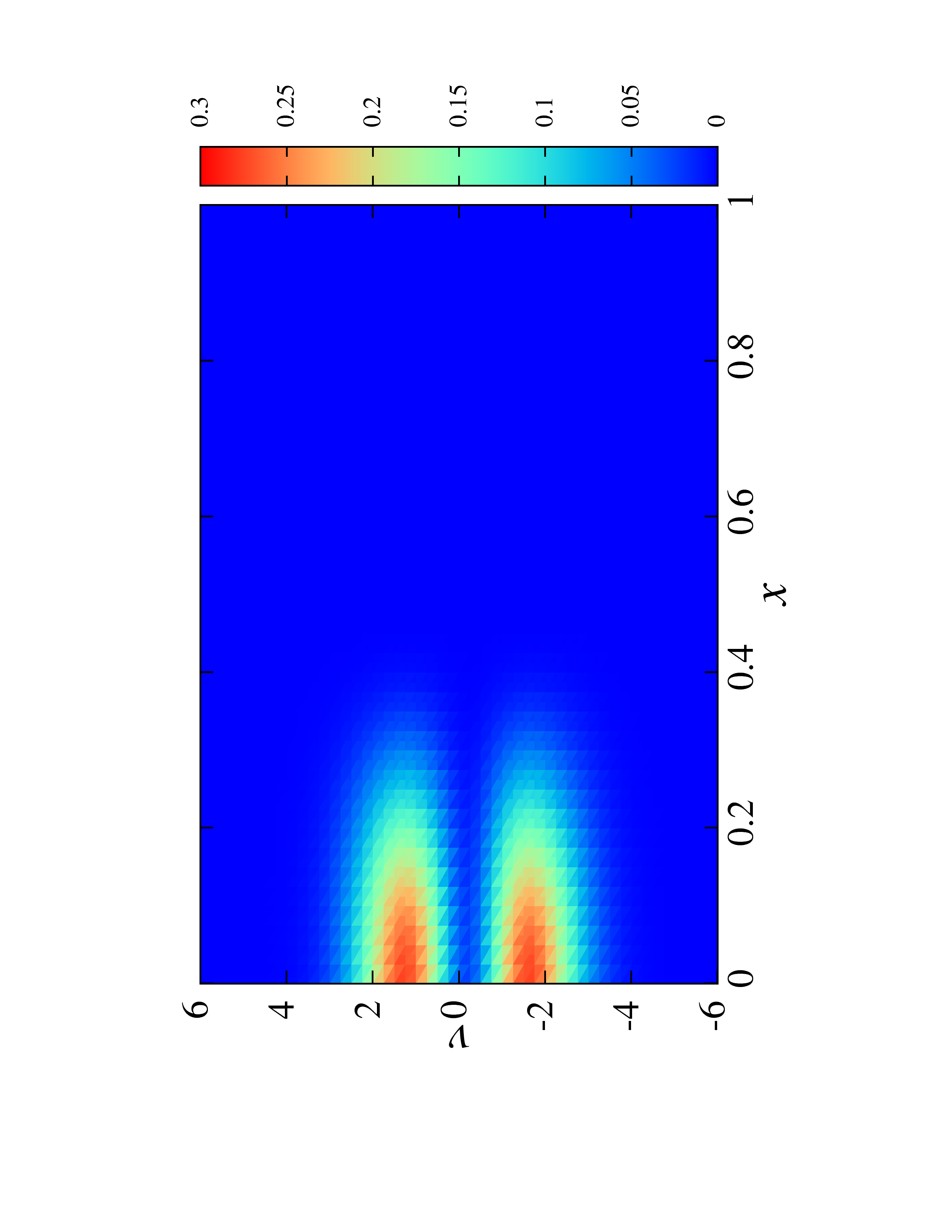}}                
		  \subfloat[$t=0.1$]{
		  \includegraphics[scale=0.32, angle=-90, clip=true, viewport=1.5in 1.5in 7.5in 10.5in]{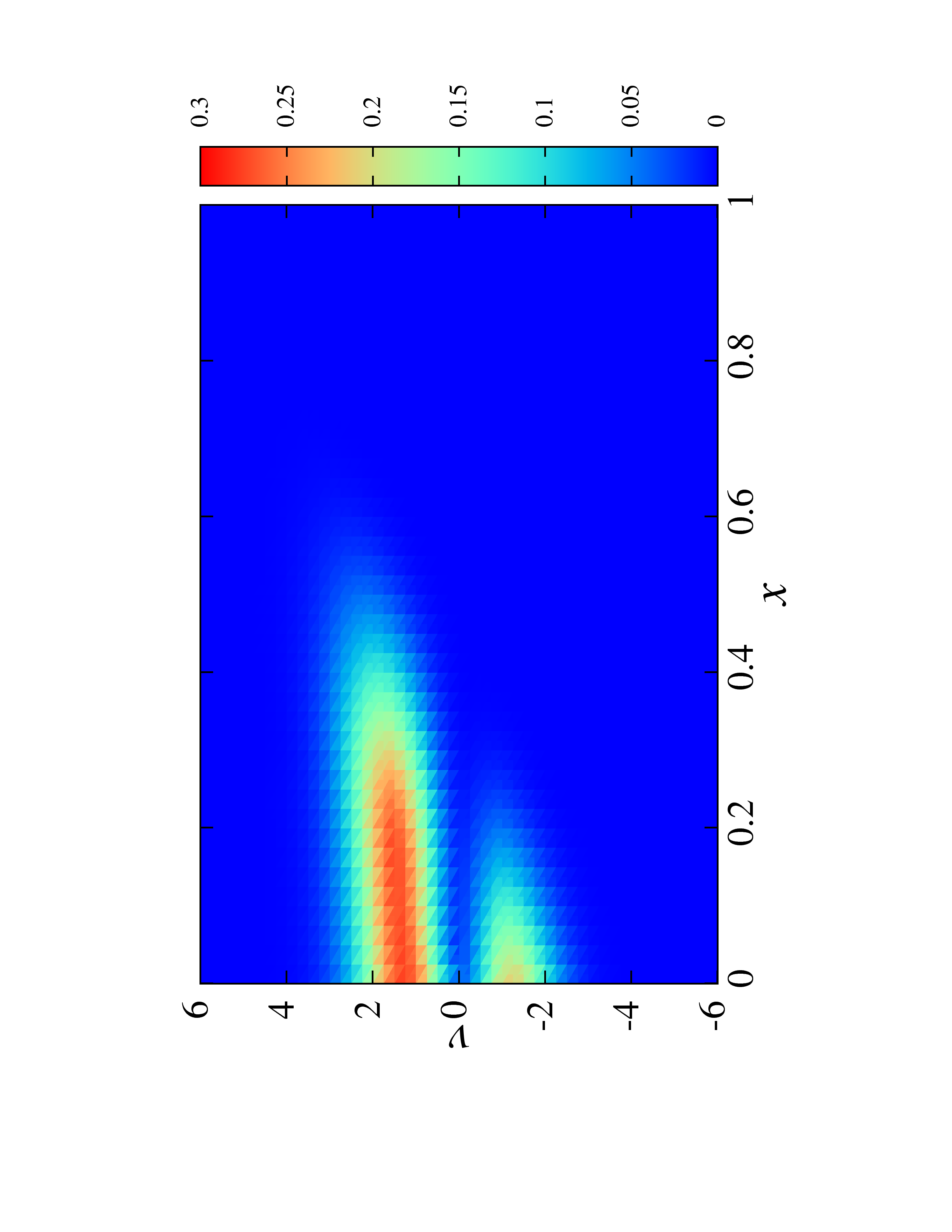}}
		  \\ 	  
		  \subfloat[$t=0.2$]{
		  \includegraphics[scale=0.32, angle=-90, clip=true, viewport=1.5in 1.5in 7.5in 10.5in]{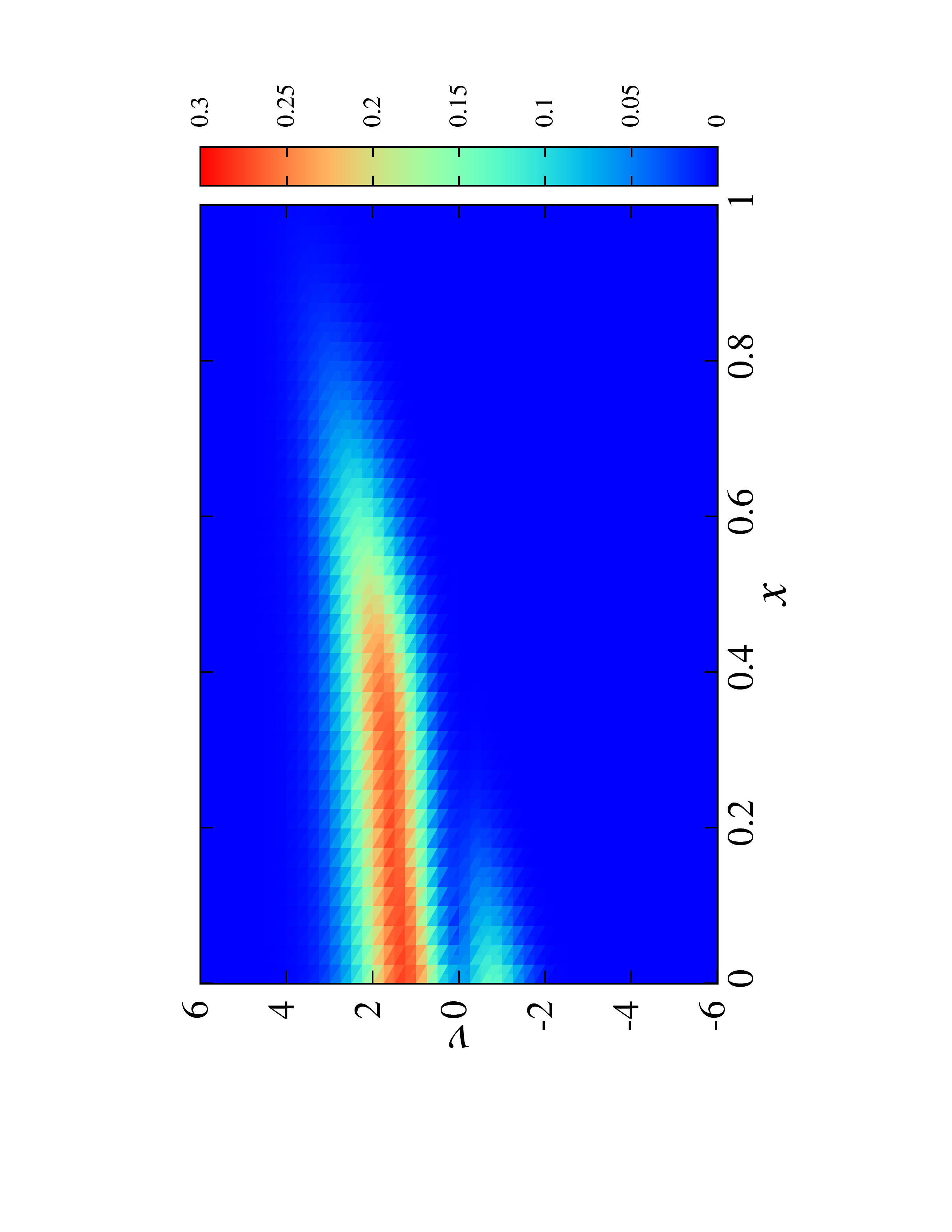}}                
		  \subfloat[$t=0.3$]{
		  \includegraphics[scale=0.32, angle=-90, clip=true, viewport=1.5in 1.5in 7.5in 10.5in]{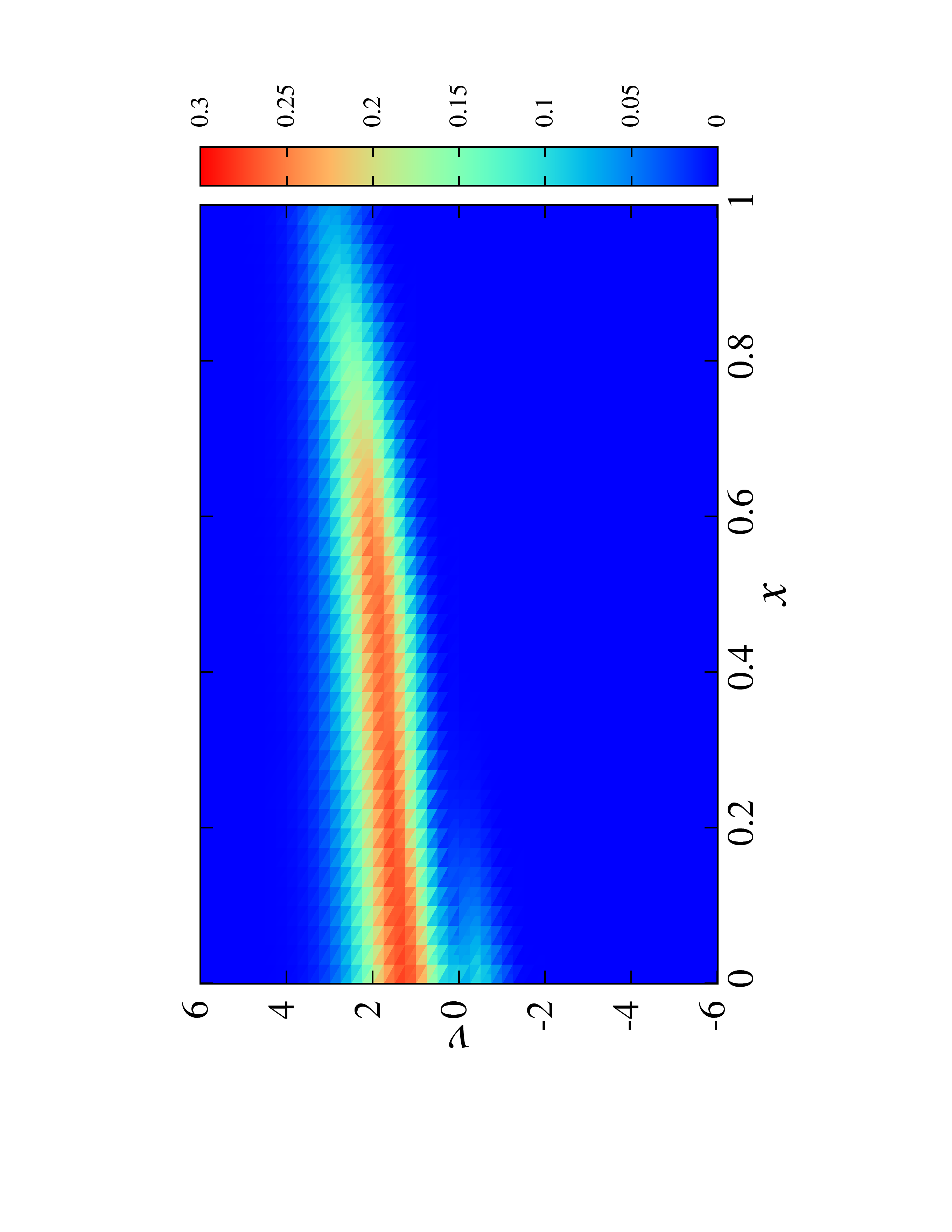}} 	  
		\caption{ {\bf Plane Diode:} Solution of the VP system for non-smooth solution test case at different times using mesh $60 \times 60$ and 
		 $k=2$.}
		\label{fig:sim-non-smooth2}
	\end{figure}

Observe that the behaviour under mesh refinement of the measured quantities is very different for $\lambda(0)=0$ (Figure \ref{fig:bvp:00}) and $\lambda(0)= 2.10947$ (Figure \ref{fig:bvp2}). From the graphics, for the former case we deduce convergence, indicating that the solution is indeed continuous. For $\lambda(0)= 2.10947$, the $\Vert \nabla f_h \Vert_{L^2}$ increases as the mesh is refined, which seems to indicate that  $\Vert \nabla f_h \Vert_{L^2}$  diverges (and blows up in finite time). The same effect is observed by increasing the polynomial degree.\\
\begin{figure}[!htb]
		\centering
		  \includegraphics[scale=0.65, angle=0]{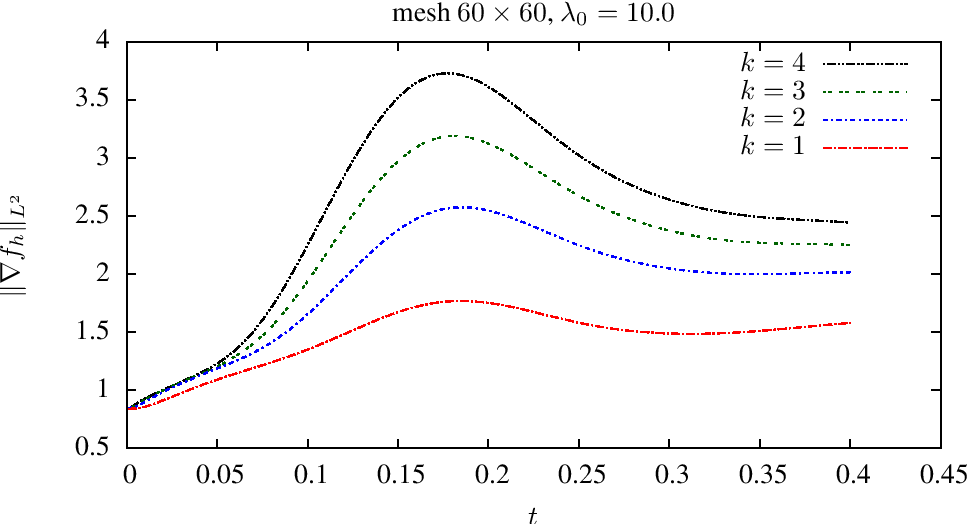}   
		  \hfill               
		  \includegraphics[scale=0.65, angle=0]{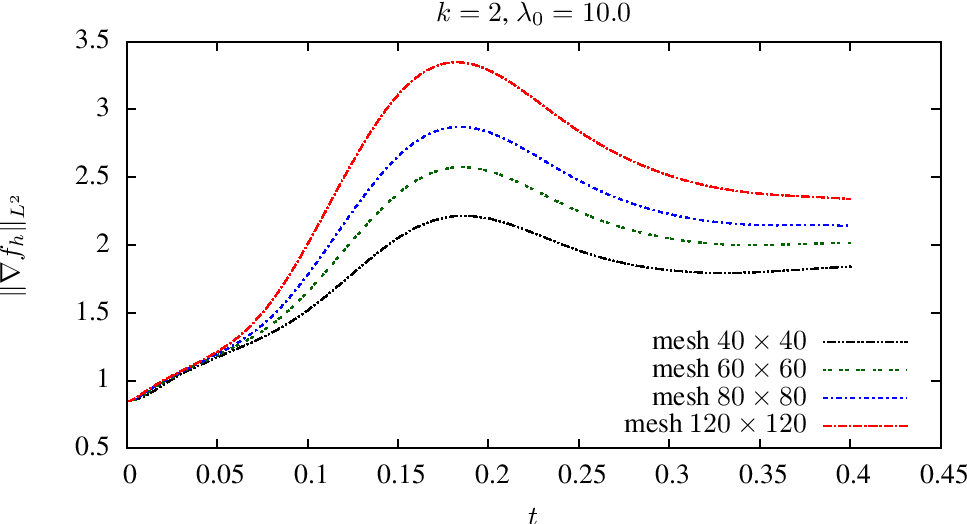} 
		  \\		
		  \includegraphics[scale=0.65, angle=0]{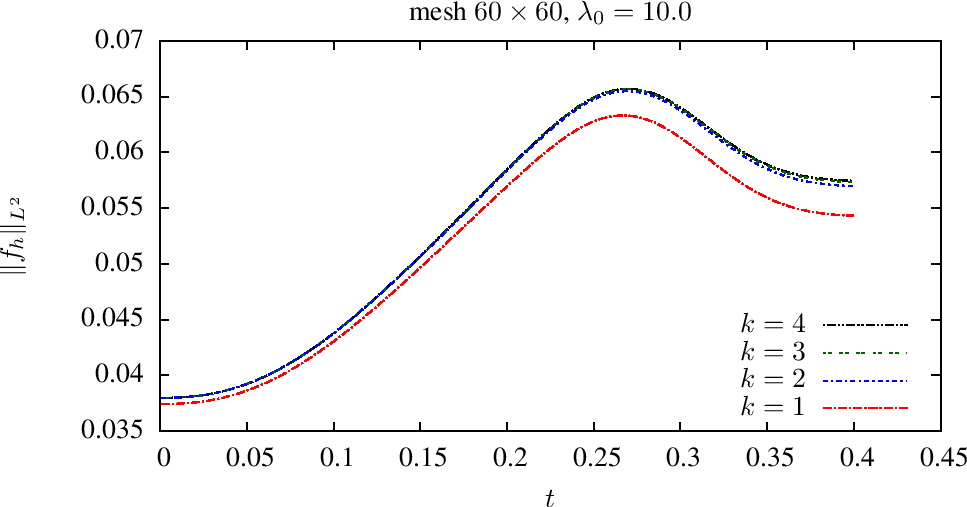}     
		  \hfill               
		  \includegraphics[scale=0.65, angle=0]{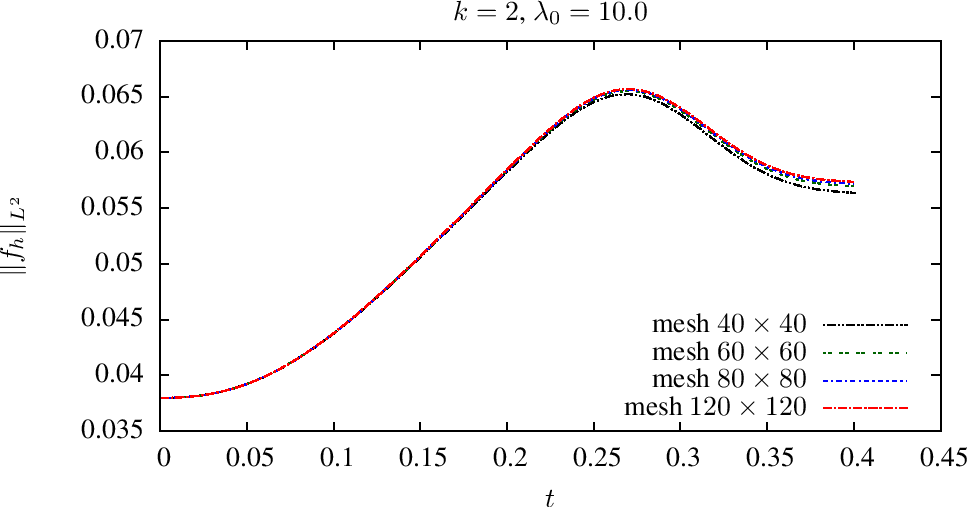}     	
		  \\
		  \includegraphics[scale=0.65, angle=0]{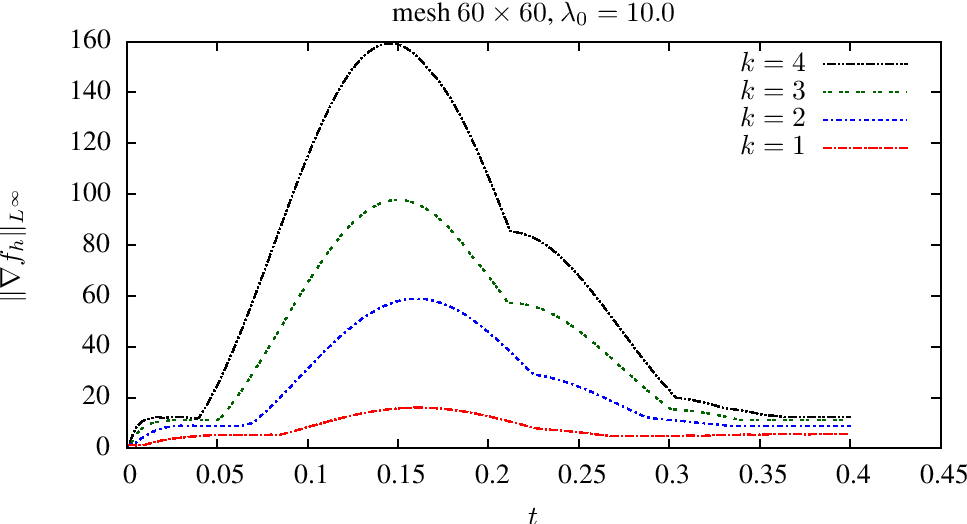}
		  \hfill                	  
		  \includegraphics[scale=0.65, angle=0]{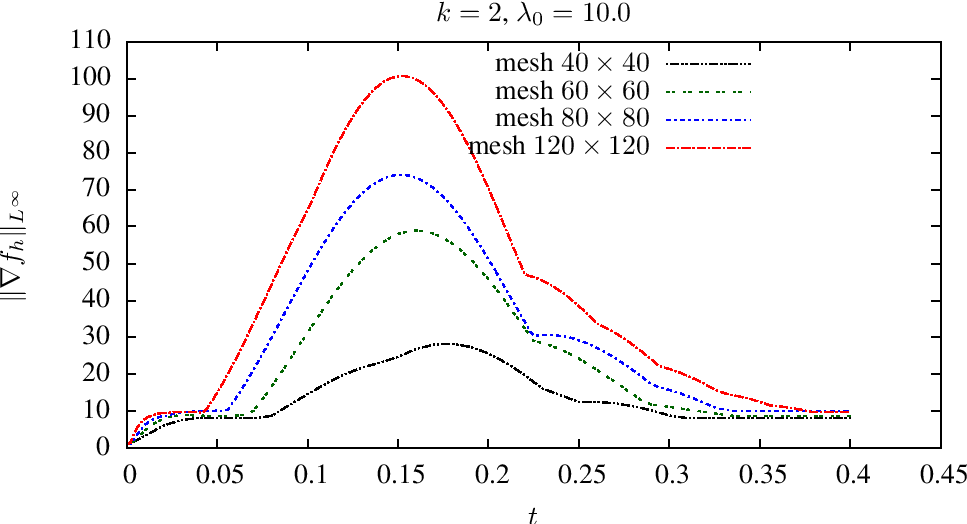}   
		\caption{ {\bf Plane Diode:} Evolution of $\Vert \nabla f_h \Vert_{L^2}$ (top), 
		$\Vert f_h \Vert_{L^2}$ (middle) and
		$\Vert \nabla f_h \Vert_{L^\infty}$ (bottom) for different mesh size and different $k$.
		}
		\label{fig:bvp3}
	\end{figure}
In Figure~\ref{fig:sim-non-smooth2} we represent the approximate solution in phase space $(x,v)$ for different times for  $\lambda(0)= 2.10947$.\\
\begin{figure}[!htb]
		\centering
		  \subfloat[$t=0.0$]{
		  \includegraphics[scale=0.32, angle=-90, clip=true, viewport=1.5in 1.5in 7.5in 10.5in]
		  {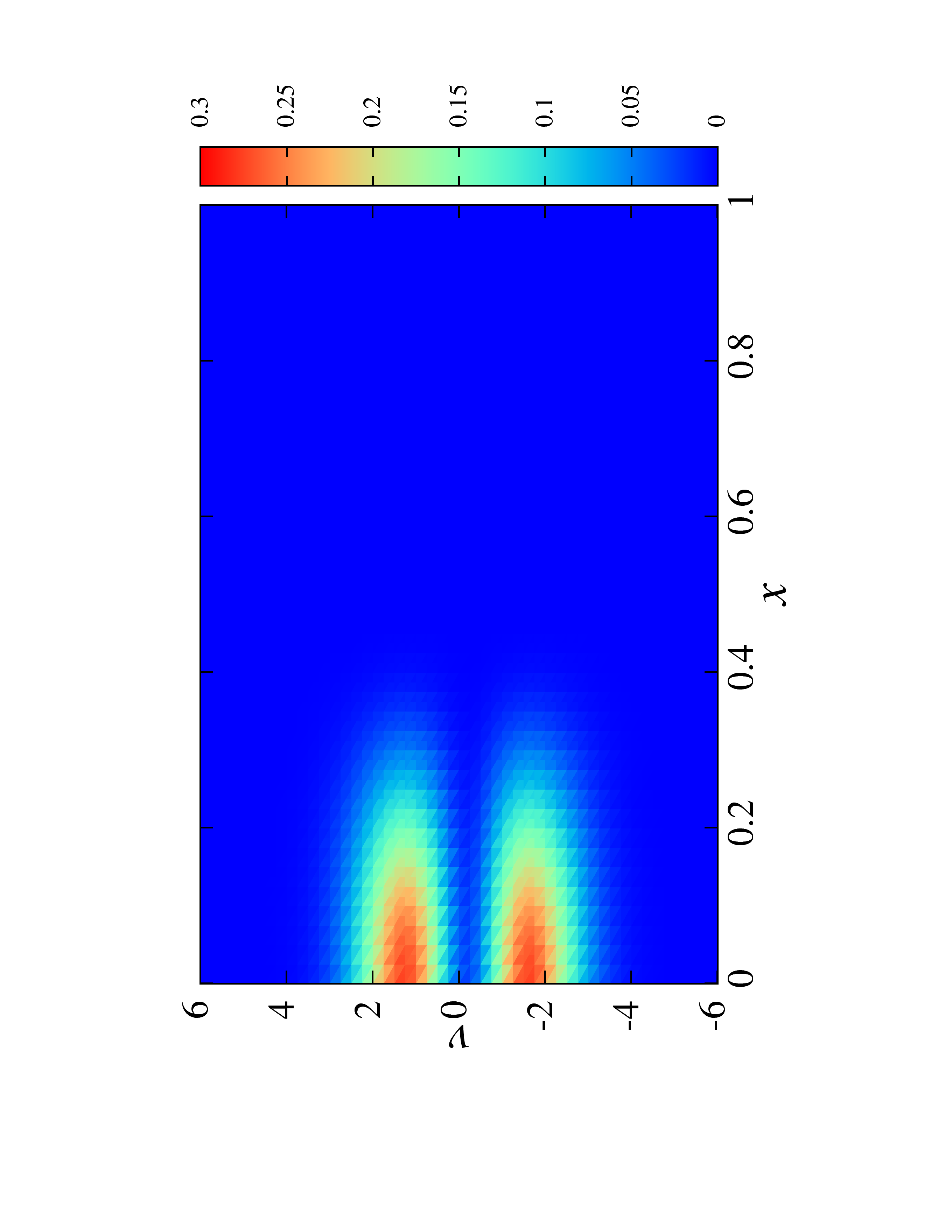} } 
		  \subfloat[$t=0.1$]{
		  \includegraphics[scale=0.32, angle=-90, clip=true, viewport=1.5in 1.5in 7.5in 10.5in]
		  {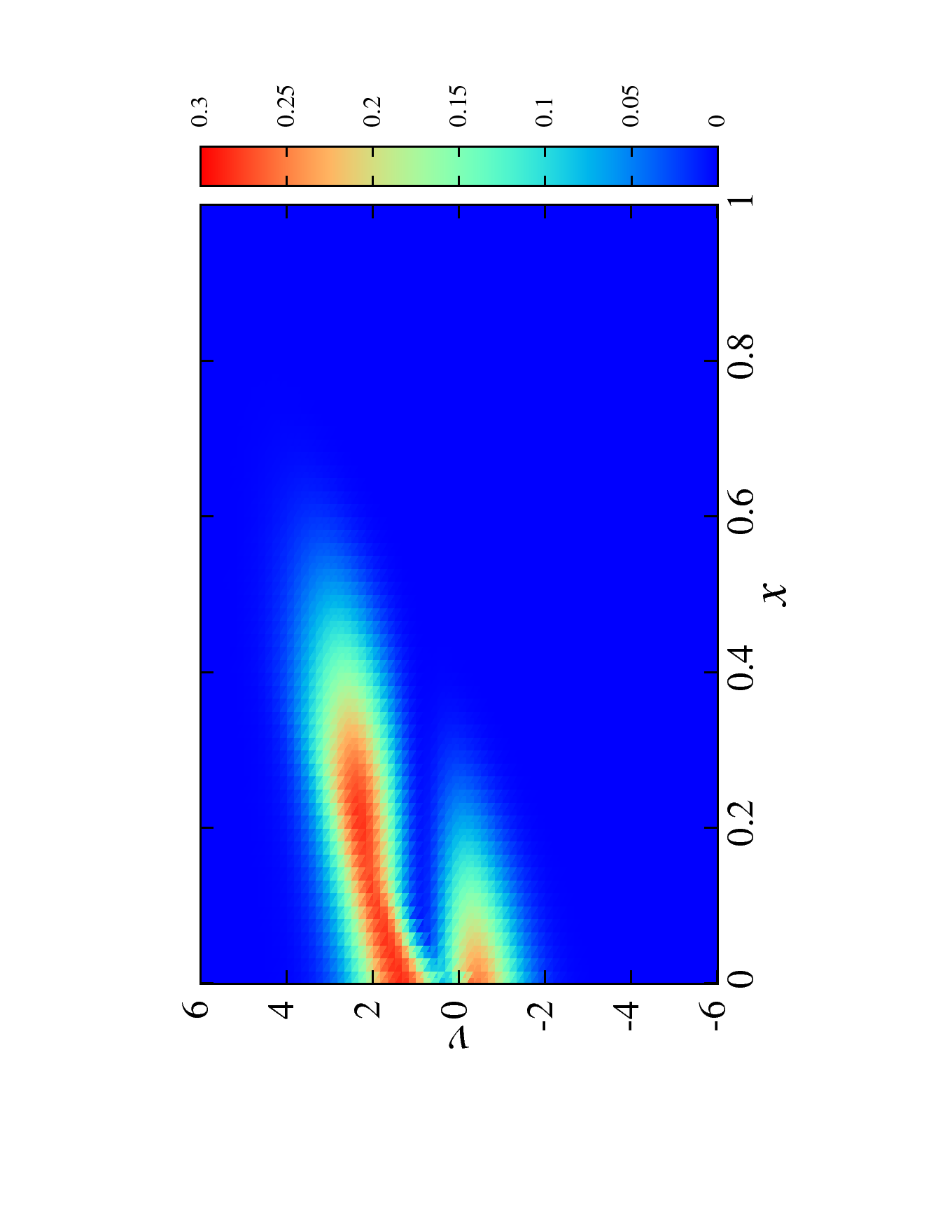} } 
		  \\ 	  
		  \subfloat[$t=0.2$]{
		  \includegraphics[scale=0.32, angle=-90, clip=true, viewport=1.5in 1.5in 7.5in 10.5in]
		  {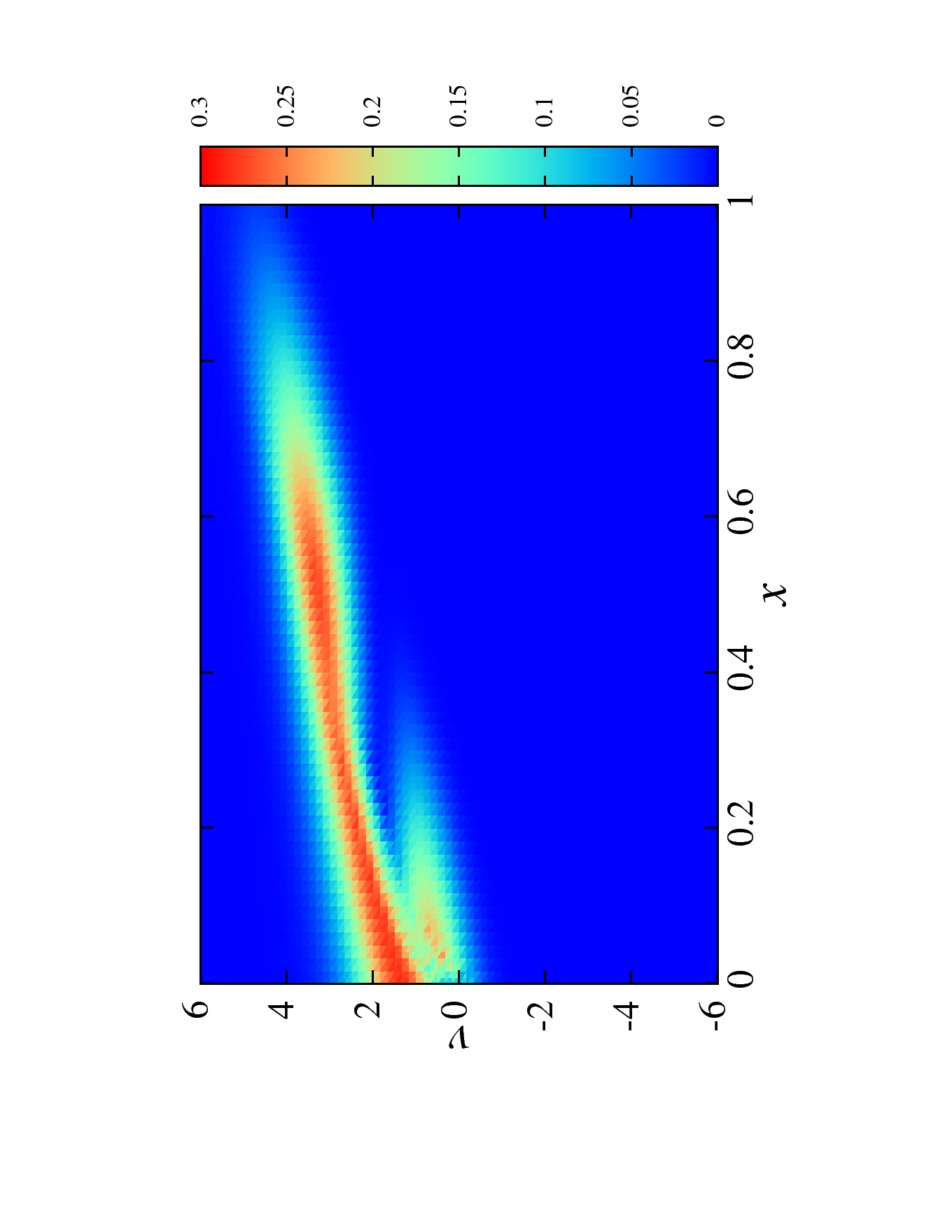} } 
		  \subfloat[$t=0.3$]{
		  \includegraphics[scale=0.32, angle=-90, clip=true, viewport=1.5in 1.5in 7.5in 10.5in]
		  {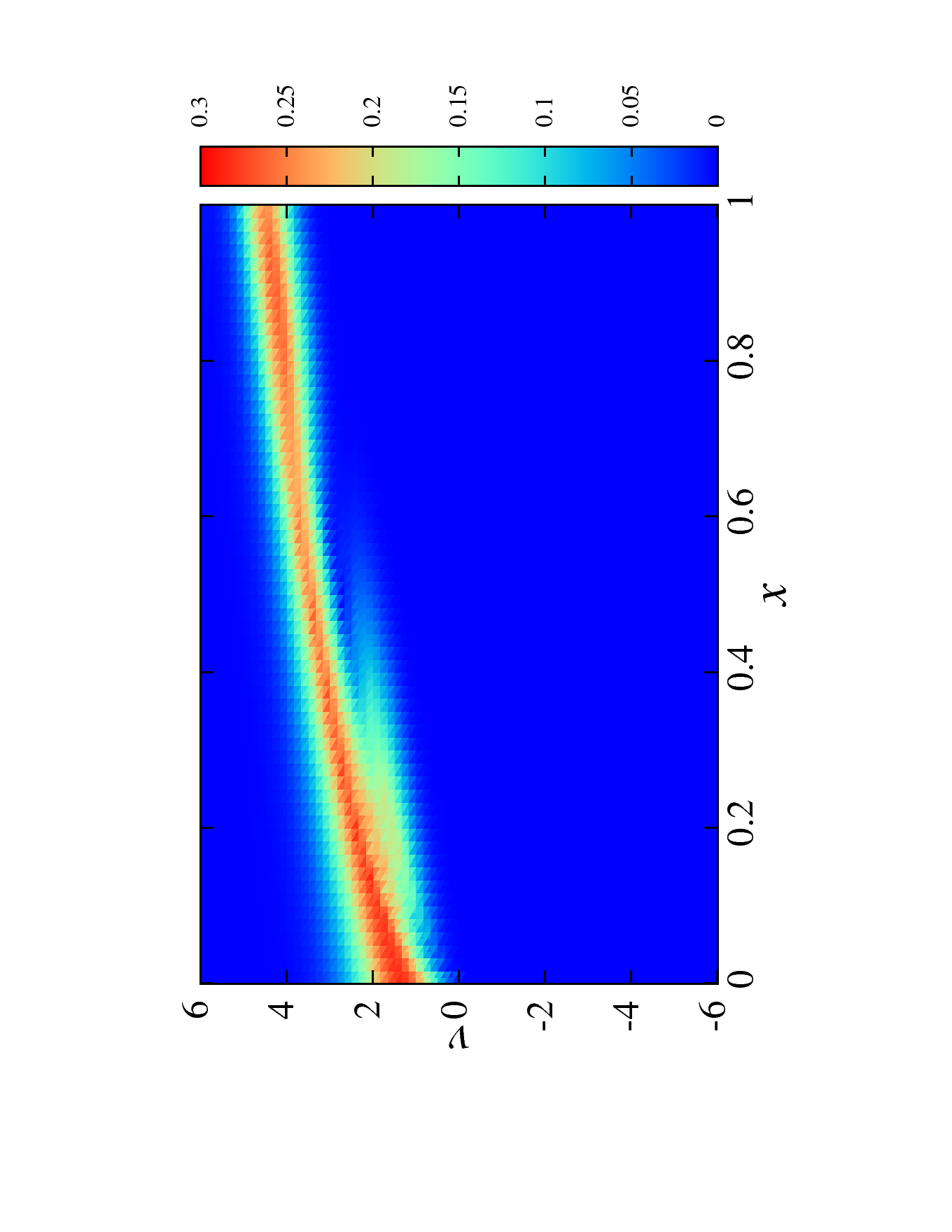} } 
		\caption{{\bf Plane Diode:} Solution of the VP system for non-smooth solution test case at different times using mesh $60 \times 60$ and 
		 $k=2$.}
		\label{fig:sim-non-smooth4}
	\end{figure}
	
The corresponding results and graphics for  $\lambda(0)=10$ are given in Figure \ref{fig:bvp3}.
Notice that in this case, by refining the mesh the quantity $\Vert \nabla f_h \Vert_{L^2}$ 
and $\Vert \nabla f_h \Vert_{L^\infty}$ also increases although after some time it decreases again.
This effect might be due to the fact that the singularity in the problem is very weak, and as the time evolves the full discretized DG scheme might be adding too much artificial diffusion to capture the singularity at all times (note that the explicit time discretization adds also numerical diffusion). The issue of coupling the scheme with some conservative integrator is currently under investigation.
\\
For $\lambda(0)=10$ we plot in Figure \ref{fig:sim-non-smooth4} the approximate density in phase space, computed with a mesh $60\times 60$ and polynomial degree $k=2$.
To further assess the possible ability of the DG schemes to capture the discontinuity in $f_h$ we have represented in Figure  \ref{fig:sim-non-smooth-profile}  the profile of the solution.
	\begin{figure}[!htb]
		\centering
		  \subfloat[mesh $80 \times 80$]{
		  \includegraphics[scale=0.65, angle=0]{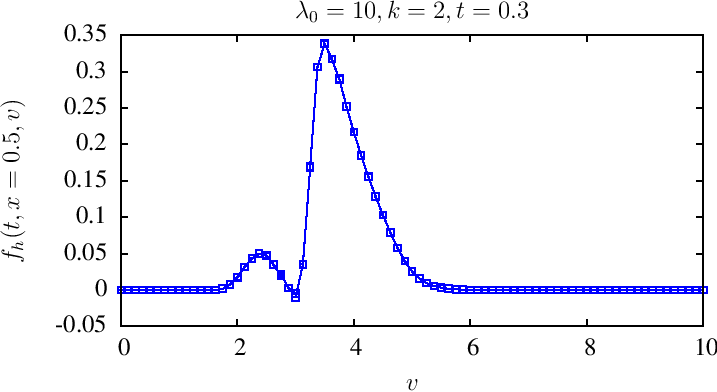}}    
		  \hfill              
		  \subfloat[mesh $120 \times 120$]{
		  \includegraphics[scale=0.65, angle=0]{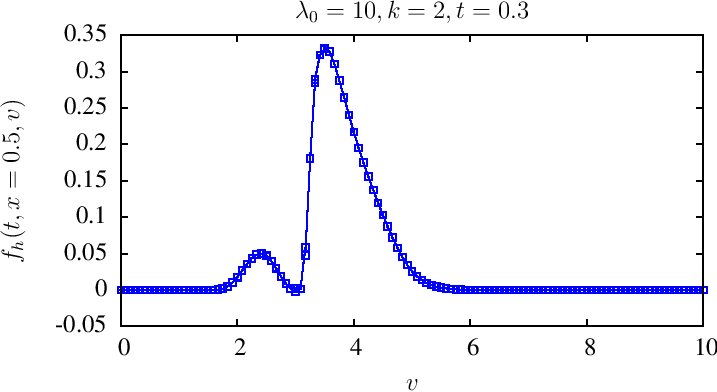}}                  \\
		  \subfloat[mesh $160 \times 160$]{
		  \includegraphics[scale=0.65, angle=0]{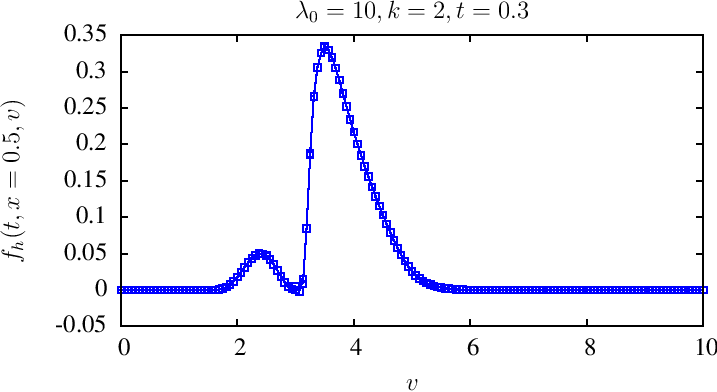}} 
		  \hfill                 
		  \subfloat[mesh $200 \times 200$]{
		  \includegraphics[scale=0.65, angle=0]{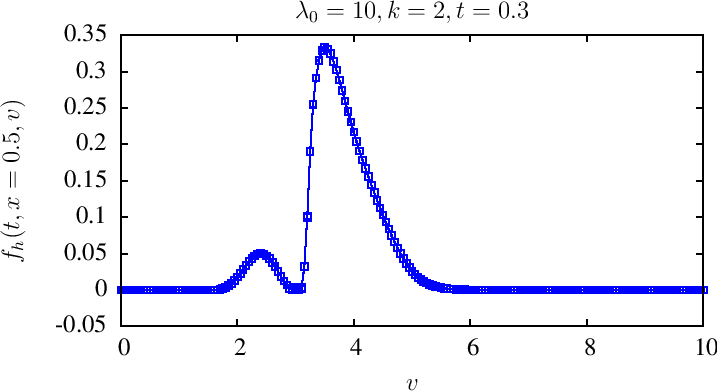}}                  		  
		\caption{{\bf Plane Diode:} The profile of the solution at $x=0.5$ and different mesh for $\lambda_0 = 10$.}
		\label{fig:sim-non-smooth-profile}
	\end{figure}
\section{Conclusions}\label{sec:fin}
We have studied the verification and validation of the high order DG methods introduced in \cite{acs0} for approximating the one-dimensional Vlasov-Poisson system with periodic boundary conditions.
We have proposed two possible modifications of the definition of the DG schemes, that allow for practical and efficient implementation. We have shown theoretically and demonstrated numerically, that with such modifications the resulting DG methods still preserve the total discrete mass and energy (this if $k\geq 2$). We have also verified numerically that in the case of smooth solutions, the approximate distribution function and the electrostatic field converge optimally in $L^{2}$. We have discussed the time integration for the schemes, demonstrating that there is no essential benefit in using a TVD RK integrator (rather than a standard  high order RK) for the practical simulations in plasma physics. We have shown how the fully discretized methods can be efficiently implemented in parallel. 
Moreover the performance of the introduced DG methods is validated, with several benchmark problems of plasma physics such as {\it linear} and {\it nonlinear Landau damping} and two different tests on {\it two stream instability}.
We have also discussed how the schemes could be adapted for approximating a Vlasov-Poisson boundary value problem. It is demonstrated  that the schemes have some potential ability for capturing the discontinuity of  the solution in this case, although some further tuning on the time integration seems to be required to reproduce  the correct weak singularity.  This is currently under research.

\section*{Acknowledgments}
Blanca Ayuso de Dios thanks Juan Jos\'e L\'opez-Velazquez  from Bonn University for helpful discussions on the boundary value problem. The first author has  been partially supported by MICINN grant MTM$2011-27739-$C$04-04$ and  Ag\`encia de Gesti\'o d'Ajuts Universitaris i de Recerca-Generalitat de Catalunya grant $2009-$SGR$-345$.  Soheil Hajian thanks MathMods consortium, Universitat Aut\`onoma de Barcelona (UAB) and Centre de Recerca Matem\`atica (CRM). The second author has been partially supported by Erasmus Mundus scholarship of the European commission and a grant from CRM.

\appendix
\section{Basis functions}\label{ap0}
\subsection{Lagrange polynomials}
In the following parts we introduces basis functions that we used in the actual implementation of the scheme.
Let $I:=[0,1]$ then in order to span the $\mathbb{P}^k(I)$ we introduce the Lagrange basis function defined by 
\begin{equation}
	\hat{l}_n(r) := 
	\prod_{1 \leq m \leq k+1, m \not = n} \frac{r - r_m}{r_n - r_m}
	\qquad r \in I, \forall n= 1,...,k+1,
\end{equation}
where $\{ r_1, ..., r_{k+1} \}$ is the set of distinct nodal coordinates in $I$. Recall that the Lagrange polynomials satisfy
\begin{equation}
	\hat{l}_n(r_m) = \delta_{n m} \quad \forall n,m = 1,...,k+1.
\end{equation}
Now let $f_h(.,.,t) \in \mathbb{P}^k(I) \times \mathbb{P}^k(I)$, then we can write it as a tensor product
\begin{equation}
	f_h(x,v,t) = \sum_{n,m=1}^{k+1} \alpha_{n,m}(t)\, \hat{l}_n(x) \hat{l}_m(v),
\end{equation}
where $\alpha_{n,m}(t)$ are interpolation coefficients of $f_h(x,v,t)$ at $(x_n,v_m)$ and 
\begin{equation}
	\{ (x_1,v_1), (x_1,v_2), ..., (x_{k+1},v_{k+1}) \},
\end{equation} is the set of distinct nodal coordinates in $I^2$. For more information about the other possible basis functions and methods to evaluate mass and gradient matrices using Lagrange polynomials we refer to \cite{Hesthaven:2007:NDG:1557392}.
\subsection{Bernstein polynomials}
Here we briefly introduce a basis function for $\mathbb{P}^k(I)$ that has a useful properties and will be used to facilitate evaluation of the term containing flux of electric field, i.e. $\widehat{E_h f_h}$, in \eqref{method0}.
Consider we are interested to determine whether or not $E_h \in \mathbb{P}^k(I)$ changes sign in $I:=[0,1]$. A simple approach to show that $E_h$ is positive (or negative) over whole $I$ is to express $E_h$ using polynomials that are positive in $I$. To do so we use Bernstein polynomials, $B^k_n(x)$, which are positive over $I$ and satisfy
\begin{equation}
	B_n^k(x) = (1-x) B_{n}^{k-1}(x) + x \, B_{n-1}^{k-1}(x)
	\quad \forall n = 0,...,k,
\end{equation}
for $B_{0}^{0}(x) = 1$ and $B_n^k(x)=0, \forall n < 0$ or $n > k$. 

We  use two properties of the Bernstein in the actual implementation. First one is
\begin{equation}
	B_n^k(x) \geq 0 \quad \forall n =0, ..., k-1, \quad x \in [0,1].
\end{equation}
More precisely, the Bernstein polynomials are all positive over $I$. Let us express $E_h(x)$ by
\begin{equation*}
	E_h(x) = \sum_{m=0}^{k} \beta_m \, B_{m}^{k}(x),
\end{equation*}
then $E_h(x)$ is non-negative over $I$ if $\beta_m \geq 0, \forall m$ (and similarly non-positive if $\beta_m \leq 0 $). Another properties says that $E_h(x)$ is located inside the convex hull produced by the set $\{ \beta_m \}$, which implies
\begin{equation}
\begin{array}{l}
	\displaystyle
	\min_{m} \beta_m \leq \min_{x \in I} E_h(x),
	\\ \displaystyle
	\max_{x \in I} E_h(x) \leq \max_{m} \beta_m	.
\end{array} 
\end{equation}
This last properties helps us to obtain an estimate of $\min$ and $\max$ of $E_h$. For more details on other properties of Bernstein method that we do not use here, we refer to \cite{phillips-interp}.

As we discussed before we  use Lagrange polynomials to span $\mathbb{P}^k$ and therefore for $E_h$. Now the question is how one can convert coefficients of expansion using Lagrange basis functions to Bernstein. The change of basis from Lagrange to Bernstein involves a matrix-vector multiplication, where the matrix is the inverse of a Vandermonde matrix.
We expand $E_h(x)$ in Lagrange polynomial basis
\begin{equation}
	E_h(x) = \sum_{m=1}^{k+1} \alpha_m \, \hat{l}_{m}(x),
\end{equation}
and define the vector of coefficients in Lagrange and Bernstein by
\begin{equation}
		{\boldsymbol \beta } := \left[ \beta_0, ..., \beta_{k} \right]^{T}, \quad
		{\boldsymbol \alpha } := \left[ \alpha_1, ..., \alpha_{k+1} \right]^{T},
\end{equation}
then we have
\begin{equation}
	{\boldsymbol \alpha } = V \cdot {\boldsymbol \beta },
\end{equation}
where the Vandermonde matrix is defined by
\begin{equation}
	V_{nm} := B_{m-1}^{k}(r_n) \quad \forall n,m \in 1,...,k+1.
\end{equation}
If we want to check positivity (similarly negativity) of $E_h$ expressed using Lagrange polynomials (given ${\boldsymbol \alpha }$), we have to compute, ${\boldsymbol \beta } = V^{-1} \cdot {\boldsymbol \alpha }$ and then we just check positivity of all entries of ${\boldsymbol \beta }$ (and similarly for negativity).

Note that since $V$ is a $(k+1)\cdot(k+1)$ Vandermonde matrix, its inversion is  cheap and done once for all,  before the time marching process starts and it is saved for later use. Therefore 
In Figure \ref{fig:basis}, we depict Lagrange, Legendre and Bernstein polynomials for $k=2$. Note the positivity of Bernstein polynomials on $I=[0,1]$.
\begin{figure}[!htb]
		\centering
		\includegraphics[scale=1]{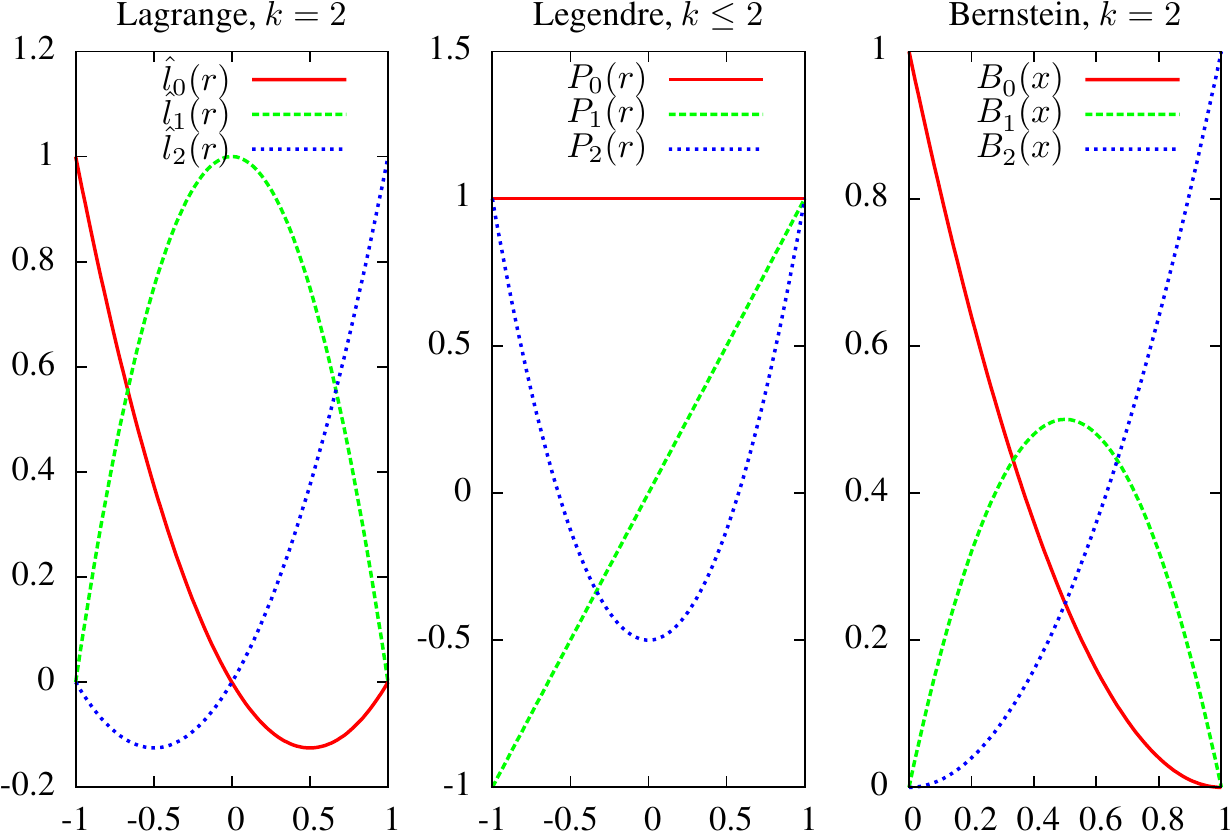}	
		\caption{Lagrange, Legendre and Bernstein basis functions of degree $2$.}
		\label{fig:basis}
\end{figure}


\end{document}